\documentclass[12pt, reqno]{amsart}
\usepackage{amsmath, amsthm, amscd, amsfonts, amssymb, graphicx, color}
\usepackage[bookmarksnumbered, colorlinks, plainpages]{hyperref}
\hypersetup{colorlinks=true,linkcolor=red, anchorcolor=green, citecolor=cyan, urlcolor=red, filecolor=magenta, pdftoolbar=true}

\newtheorem{theorem}{Theorem}[section]
\newtheorem{lemma}[theorem]{Lemma}

\theoremstyle{definition}
\newtheorem{definition}[theorem]{Definition}

\theoremstyle{remark}

\numberwithin{equation}{section}

\begin{document}
\setcounter{page}{1}

\title[Supporting functionals]{Characterization of Supporting Functionals at Points of the Unit Sphere of Orlicz--Lorentz Spaces}

\author[D. Wang, Y. Li]{Di Wang$^1$ and Yongjin Li$^2$$^{*}$}

\address{$^{1}$ Department of Mathematics, Sun Yat-sen University, Guangzhou 510275, China.}
\email{\textcolor[rgb]{0.00,0.00,0.84}{wangd267@mail2.sysu.edu.cn}}

\address{$^{2}$ Department of Mathematics, Sun Yat-sen University, Guangzhou 510275, China.}
\email{\textcolor[rgb]{0.00,0.00,0.84}{stslyj@mail.sysu.edu.cn}}


\subjclass[2010]{Primary 46E30; Secondary 46A80, 46B20.}

\keywords{Orlicz--Lorentz function space, supporting functional, dual space.}

\date{Received: xxxxx; Revised: yyyyyy; Accepted: zzzzzz.
\newline \indent $^{*}$ Corresponding author}

\begin{abstract}
In this paper we give a complete characterization of the supporting functionals at any point on the unit sphere of Orlicz--Lorentz spaces $\Lambda_{\varphi, \omega}$. Departing from traditional approaches, we establish our results without assuming that the Orlicz function $\varphi$ is an N--function. These results provide a basis for studying the extremal structures of Orlicz--Lorentz spaces.
\end{abstract} \maketitle

\section{Introduction and preliminaries}
The main purpose of this article is to provide a complete characterization of the supporting functionals at any point on the unit sphere of Orlicz--Lorentz spaces $\Lambda_{\varphi, \omega}^{o}$ generated by an arbitrary Orlicz function $\varphi$ and a decreasing weight $\omega$. The study of supporting functionals is not only a central topic in the geometry of Banach spaces but also serves as an essential tool for investigating various extremal structures. Recently, in \cite{Wang2024}, the descriptions of supporting functionals in these spaces were established; however, the results rely on the assumption that the Orlicz function $\varphi$ is an N-function.
In this article, we remove the N-function restriction, provide a more general framework for the study of supporting functionals in Orlicz--Lorentz spaces.

Let $X$ be a Banach space and let $x\in S(X)$ where $S(X)$ is the unit sphere of $X$. A functional $f\in X^{*}$ is called a supporting functional at $x$ if 
$$f(x)=1, \;\|f\|_{X^{*}}=1.$$
Equivalently, $f$ attains its norm at $x$ and $\|f\|_{X^{*}}=1$.

For $f\in L_{0}$, its non-increasing rearrangement $f^*$ is defined by
\begin{equation*}
	\begin{aligned}
f^{*}(t):&=\inf\{\lambda >0 : \mu_{f}(\lambda)\leq t \}, \;  t>0,\\
f^{*}(0):&=\lim_{t\rightarrow 0+}f^{*}(t)=ess\: sup|f|.
\end{aligned}
\end{equation*}
where $\mu_{f}$ is the distribution function of $f$, and it is defined by
$$
\mu_{f}(t):=\mu(\{s \in R^{+} : | f(s)|  > t\}), \;  t>0.
$$
$\sigma:(0,r)\rightarrow (0,r)$ $(r\leq \infty)$ is called a measure-preserving transformation (\cite{Bennett1953,Brudnyi19882009}) if for every measurable set $E\subset (0,r)$, $\sigma^{-1}(E) $ is measurable and $\mu(\sigma^{-1}(E))=\mu(E)$.
It was given in \cite[Corollary 7.6]{Bennett1953} that for a resonant measure space $(R, \mu)$ and a non--negative $\mu$-measurable function $f$ on $R$ satisfying $\lim_{t\rightarrow\infty}f^{*}(t)=0$, there exists a measure-preserving transformation $\sigma$ from the support of $f$ onto the support of $f^{*}$ such that $f=f^{*}\circ\sigma\;\mu-a.e.$ on the support of $f$. For more results on measure-preserving transformations, we refer the reader to \cite{Bennett1953,Ryff1970449}.

For a function $\omega: (0,\infty) \rightarrow (0, \infty)$, If $\omega$ is non-increasing, locally integrable with respect to the Lebesgue measure $\mu$ and
$\int_{0}^{\infty}\omega(t)dt=\infty$, then $\omega$ is called a weight function.
The Lorentz space $\Lambda_{\omega}$ is defined as
\begin{equation*}
	\Lambda_{\omega}=\left\{ f\in L_{0}: \|f\|_{\Lambda_{\omega}}=\int_{R^{+}}f^{*}(t)\omega(t)dt=\int_{R^{+}}f^{*}(t)dW<\infty \right\},
\end{equation*}
and the Marcinkiewicz space $M_{W}$ is defined as
\begin{equation*}
	M_{W}=\left\{ f\in L^{0}: \|f\|_{M_{W}}=\sup_{\alpha>0}\frac{\int_{0}^{\alpha}f^{*}(t)dt}{W(\alpha)}<\infty \right\}.
\end{equation*}
where 
$
W(t):=\int_{0}^{t}\omega(s)ds, \; t\geq0.
$
The two spaces are K\"{o}the dual to each other.

For $f\in L_{0}$ and $g\in L_{0}$, if $\int_{0}^{t}f^{*}(s)ds\leq\int_{0}^{t}g^{*}(s)ds$ for $t>0$, then we say $f$ is submajorized by $g$, and we denote this by $f\prec g$.
Obviously $\|f\|_{M_{W}}\leq 1$ whenever $f\prec \omega$.

A function $[0, \infty]\rightarrow [0, \infty]$ is an Orlicz function(\cite{Chen1996}) if $\varphi$ is convex, $\varphi(0)=0$ and $\varphi(t)>0$ for all $t>0$. For any Orlicz function $\varphi$, its complementary function $\psi$ in the sense of Young is defined by the formula
$$
\psi(t)=\sup_{s>0}\{| st | -\varphi(s)\}
$$
where $t>0$. Obviously, if $\varphi$ is an Orlicz function, then $\psi$ is an Orlicz function as well. 
We denote the right--derivative of $\varphi$ and $\psi$ by $p$, $q$, respectively. Both $p(s)$ and $q(t)$ are non--decreasing and satisfy:
$$
\varphi(u)=\int_{0}^{u}p(t)dt, \quad \psi(v)=\int_{0}^{v}q(s)ds \quad u\geq0, v\geq0.
$$
$\varphi$ and $\psi$ satisfy Young's Inequality:
$$
uv \leq \varphi(u)+ \psi(v), \quad u\geq0, v\geq0,
$$
and the equation $uv=\varphi(u)+\psi(v)$ holds whenever $u\in [q_{-}(v)sign\:v,\;q(v)sign\:v]$ or $v\in [p_{-}(u)sign\: u,\; p(u)sign\: u]$, where $p_{-}(t)=sup\{p(s): 0\leq s <t\}$, $q_{-}(t)=sup\{q(s): 0\leq s <t\}$, and $p_{-}(0)=q_{-}(0)=0$. 

We recall that an Orlicz function $\varphi$ satisfies the $\Delta_{2}$-condition($\varphi\in\Delta_{2}$) if there exist $C>0$ such that $\varphi(2t)\leq C\varphi(t)$ for all $t>0$. We further say $\varphi$ is an $N$-function whenever:
$$
\lim_{t\rightarrow 0+}\frac{\varphi(t)}{t}=0  \quad and \quad \lim_{t\rightarrow \infty}\frac{\varphi(t)}{t}=\infty.
$$

If $\omega(t)$ is a constant in an interval $A$, and for any interval $B$ such that $ A\subsetneqq B$, $\omega(t)$ is not a constant when $t\in B$. Then $A$ is called a maximal constant interval of $\omega$. $L(\omega)$ is the set composed of the maximal constant intervals.

For decreasing weight function $\omega$,
let $W(a,b)=\int_{a}^{b}\omega(t)dt$.
For arbitrary $f\in  L_{0}$, let $R(a_{1}, b_{1})=\frac{F(a_{1}, b_{1})}{W(a_{1}, b_{1})}=\frac{\int_{a_{1}}^{b_{1}}f(t)dt}{\int_{a_{1}}^{b_{1}}\omega(t)dt}$.

\begin{definition}[\cite{Halperin195305}]
	If for every $s\in (a_{1}, b_{1})$, $W(a_{1}, s)=\int_{a_{1}}^{s}\omega(t)dt>0$ and $R(a_{1},s)\leq R(a_{1}, b_{1})$, then $(a_{1}, b_{1})$ is called a level interval of $f$ with respect to $\omega$. If the level interval is not contained in a larger one, it is called a maximal level interval.
\end{definition}
The maximal level intervals are non-overlapping and denumerable.
We will introduce the definition of level function and inverse level function.
\begin{definition}\cite[Definition 3.2]{Halperin195305}, \cite[Definition 4.2]{Kaminska2014229}
	The level function of $f$ with respect to $\omega$ is denoted by $f^{0}$, and $f^{0}$ is defined by
	\begin{equation*}
		f^{0}(t)=\left\{
		\begin{aligned}
			&R(a_{n},b_{n})\omega(t) \;\;for\; t\in (a_{n}, b_{n}),\\
			&\;\;\;f(t) \;\;\;\;\; ~~~~~~otherwise,
		\end{aligned}
		\right.
	\end{equation*}
	The inverse level function of $\omega$ with respect to $f$ is defined by
	\begin{equation*}
		\omega^{f}(t) =
		\left\{
		\begin{aligned}
			&\frac{f(t)}{R(a_{n}, b_{n})}\;\;\; for \;t\in (a_{n}, b_{n}),\\
			&\;\;\omega(t) \;\;\;\; ~~~~otherwise,
		\end{aligned}
		\right.
	\end{equation*}
	where $(a_{n}, b_{n})$ is an enumeration of all maximal intervals of $f$ with respect to $\omega$, and $R(a_{n}, b_{n})=\frac{F(a_{n}, b_{n})}{W(a_{n}, b_{n})}$.
\end{definition}
We note that, for arbitrary $f\in\Lambda_{\varphi, \omega}$, we have $\omega^{f^{*}}=\omega$ since $f^{*}$ is decreasing.

For $f\in\Lambda_{\varphi, \omega}$, its modular $\rho_{\varphi, \omega}$ defined by
$$
\rho_{\varphi, \omega}(f)=\int_{0}^{\infty}\varphi(f^*(t))\omega(t)dt.
$$
The Orlicz-Lorentz function space is defined by
$$
\Lambda_{\varphi, \omega}=\{f\in L_{0}: \rho_{\varphi, \omega}(\lambda f)=\int_{0}^{\infty}\varphi(\lambda f^*(t))\omega(t)dt < \infty ~for~some~ \lambda >0\}.
$$
$\Lambda_{\varphi, \omega}$ is a Banach space under the Luxemburg norm or the Orlicz norm(\cite{Kaminska199029,Foralewski2023}). The Luxemburg norm is defined by
$$
\|f\|=\|f\|_{\varphi, \omega}=\inf\{\lambda>0: \rho_{\varphi, \omega}(\frac{f}{\lambda})\leq 1\},
$$
and the Orlicz norm is defined by
$$
\|f\|^{o}=\|f\|_{\varphi, \omega}^{o}=\sup_{\rho_{\psi, \omega}(g)\leq 1}\int_{0}^{\infty}f^*(t)g^*(t)\omega(t)dt.
$$
Moreover, the Orlicz norm admits the following Amemiya-type representation(\cite[Theorem 3.1]{Foralewski2023}), that is, 
\begin{equation}
	\|f\|_{\varphi, \omega}^{o}=\inf_{k>0}{\frac{1}{k}\left(1+\rho_{\varphi, \omega}(kf)\right)}
\end{equation}
holds for every $f\in \Lambda_{\varphi, \omega}^{o}$.
$E_{\varphi, \omega}$ denotes the subspace of $\Lambda_{\varphi, \omega}$ consisting of all $f\in L_{0}$ such that
$$
E_{\varphi, \omega}=\{f\in L_{0}: \rho_{\varphi, \omega}(\lambda f)=\int_{0}^{\infty}\varphi(\lambda f^*(t))\omega(t)dt < \infty ~for~all~ \lambda >0\}.
$$
For arbitrary $f\in \Lambda_{\varphi, \omega}$,
$\theta(f)$ is defined as follows.
\begin{equation}
	\theta(f)=\inf\left\{\lambda>0: \rho_{\varphi, \omega}(\frac{f}{\lambda})<\infty\right\}.
\end{equation}
Now we will introduce the space $\mathcal{M}_{\varphi, \omega}$. For arbitrary Orlicz function $\varphi$ and decreasing weight $\omega$ the modular $P_{\varphi, \omega}$ is given by
\begin{equation*}
	P_{\varphi, \omega}(f)=\inf\left\{\int_{R^{+}}\varphi(\frac{f^{*}}{|g|});g\prec\omega	
	\right\},
\end{equation*}
and the space $\mathcal{M}_{\varphi, \omega}$ is defined by
\begin{equation*}
	\mathcal{M}_{\varphi,\omega}=\{
	f\in L_{0}: P_{\varphi, \omega}(\lambda f)<\infty\:for ~some~\lambda >0
	\}
\end{equation*}
Let $\mathcal{M}_{\varphi, \omega}=(M_{\varphi, \omega}, \|\cdot\|_{\varphi, \omega})$ and $\mathcal{M}_{\varphi, \omega}^{o}=(\mathcal{M}_{\varphi, \omega}^{o})$, where $\|\cdot\|_{\varphi, \omega}$ and $\|\cdot\|_{\mathcal{M}_{\varphi, \omega}^{o}}$ are defined by
\begin{align}
	\|f\|_{\mathcal{M}_{\psi, \omega}^{o}}
	&=\inf_{k>0}\left\{ \frac{1}{k}\left(P_{\psi, \omega}(kf)+1\right)\right\},\\
	\|f\|_{\mathcal{M}_{\psi, \omega}}
	&=\inf\left\{\lambda>0: \: P_{\psi, \omega}\left(\frac{f}{\lambda}\right)\leq 1\right\}.
\end{align}
\section{Auxiliary results}\label{Auxiliary}

Let $\Lambda_{\varphi, \omega}=(\Lambda_{\varphi, \omega}^{o}, \|\cdot\|_{\varphi, \omega})$, 
$\Lambda_{\varphi, \omega}^{o}=(\Lambda_{\varphi, \omega}, \|\cdot\|_{\varphi, \omega})$.

\begin{lemma}\cite[Theorem 8.10]{2019abstractlorentz},\cite[Theorem 2.2]{Kaminska2014229}
	For arbitrary Orlicz function $\varphi$ and decreasing weight $\omega$, \\
	(1) the k\"{o}the dual of Orlicz-Lorentz spaces $\Lambda_{\varphi, \omega}$ and $\Lambda_{\varphi, \omega}^{o}$ are expressed as
	\begin{align*}
		(\Lambda_{\varphi, \omega}, \|\cdot\|_{\varphi, \omega})^{\prime}
		&=(\mathcal{M}_{\psi, \omega}, \|\cdot\|_{\mathcal{M}_{\psi, \omega}^{o}})\\
		(\Lambda_{\varphi, \omega}, \|\cdot\|_{\varphi, \omega}^{o})^{\prime}
		&=(\mathcal{M}_{\psi, \omega}, \|\cdot\|_{\mathcal{M}_{\varphi, \omega}})
	\end{align*}
	with equality of corresponding norms.\\
	(2) If $\varphi\in\Delta_{2}$ and $\int_{0}^{\infty}\omega(t)dt = W(\infty)=\infty$. Then the dual spaces $(\Lambda_{\varphi, \omega})^{*}$ and $(\Lambda_{\varphi, \omega}^{o})^{*}$ are isometrically isomorphic to their corresponding K\"{o}the dual spaces. For functional $L_{\phi}\in (\Lambda_{\varphi, \omega})^{*}(resp., \Phi\in (\Lambda_{\varphi, \omega}^{o})^{*})$, there exists $\phi\in \mathcal{M}_{\psi, \omega}^{o}(resp., \phi\in \mathcal{M}_{\psi, \omega}^{o}$ such that
	$$
	\Phi(f)=L_{\phi}(f)=\int_{0}^{\infty}f(t)\phi(t)dt,\;\; f\in \Lambda_{\varphi, \omega}.
	$$
	and $\|\Phi\|_{(\Lambda_{\varphi, \omega})^{*}}=\|\phi\|_{\mathcal{M}_{\psi, \omega}}^{o}\;(resp., \|\Phi\|_{(\Lambda_{\varphi, \omega}^{o})^{*}}=\|\phi\|_{\mathcal{M}_{\psi, \omega}})$.
\end{lemma}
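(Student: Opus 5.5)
The lemma is a compilation of known results (the cited theorems of Kamińska–Raynaud type and the abstract Lorentz space monograph), so the plan is to reduce it to those sources, organizing the argument in two parts matching (1) and (2).

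For (1), I start from the pairing $\langle f,\phi\rangle=\int_0^\infty f\phi\,dt$ and use the Hardy–Littlewood inequality $\int |f\phi|\le\int f^*\phi^*$. Since $\omega$ is decreasing, $\omega^{f^*}=\omega$, and the level-function theory of Halperin gives
\[
\sup_{\|f\|\le 1}\int f^*\phi^* \;=\;\sup_{\|f\|\le1}\int f^*\,\frac{(\phi^*)^0}{\omega}\,\omega .
\]
The key identity I will establish is that $\inf\{\int\psi(\phi^*/|g|)|g|\ldots\}$, i.e. $P_{\psi,\omega}(\phi)$, equals $\int_0^\infty \psi\big((\phi^*)^0/\omega\big)\omega\,dt$. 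The infimum over $g\prec\omega$ should be attained at the inverse level function $g=\omega^{\phi^*}$, by Jensen on each maximal level interval together with majorization. With this in hand, Young's inequality applied pointwise to $f^*(t)$ and $(\phi^*)^0(t)/\omega(t)$, with weight $\omega$, gives the Hölder-type bounds
\[
\int f^*\phi^* \le \|f\|_{\varphi,\omega}\,\|\phi\|_{\mathcal M^o_{\psi,\omega}} ,
\qquad
\int f^*\phi^*\le \|f\|^o_{\varphi,\omega}\,\|\phi\|_{\mathcal M_{\psi,\omega}} .
\]
The first uses the Amemiya form $(1.3)$ for $\mathcal M^o$; the second uses the Amemiya formula $(1.1)$ for $\|f\|^o$. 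For the reverse inequalities I choose $f^*$ with $f^*(t)\in[q_-(v),q(v)]$, where $v=k(\phi^*)^0/\omega$, so that Young's inequality becomes an equality. Truncation handles the case where $\rho_{\varphi,\omega}(f)$ is infinite. I then transport $f^*$ back to a function $f$ equimeasurable with $f^*$ and aligned with $\phi$ via the measure-preserving transformation $\sigma$ from Bennett–Sharpley Corollary 7.6, so that $\int f\phi=\int f^*\phi^*$.

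For (2), the argument has two steps. First, $\Delta_2$ together with $W(\infty)=\infty$ yields $E_{\varphi,\omega}=\Lambda_{\varphi,\omega}$ and order continuity of the norm, i.e. $\theta(f)=0$ for all $f$. Second, every bounded functional on an order continuous Köthe space is given by integration against an element of the Köthe dual. This follows from a Radon–Nikodym argument: the set function $E\mapsto\Phi(\chi_E)$ on sets of finite measure is countably additive because of order continuity, and is absolutely continuous, so it has a density $\phi$. Density of simple functions then extends the representation to all $f$, and the norm equalities follow from (1).

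The main obstacle is the identification of $P_{\psi,\omega}$ with the level-function modular without assuming $\varphi$ is an N-function. In that setting $\psi$ may take the value $\infty$ or vanish on an interval, so both the attainment at $\omega^{\phi^*}$ and the equality case of Young's inequality need care. I would handle this by working with $q_-$ and $q$ directly and treating separately the sets where $(\phi^*)^0/\omega$ lies outside the domain of finiteness of $\psi$.
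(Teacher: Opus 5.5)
The paper gives no proof of this lemma; it is quoted from \cite[Theorem 2.2]{Kaminska2014229} and \cite[Theorem 8.10]{2019abstractlorentz}. Your outline reconstructs the route of those sources: the identity $P_{\psi,\omega}(\phi)=\int_0^\infty\psi\bigl((\phi^*)^0/\omega\bigr)\omega\,dt$, H\"older bounds from Young's inequality and the two Amemiya formulas, and an equality case for the reverse bounds. For (2) it uses order continuity (from $\varphi\in\Delta_2$ and $W(\infty)=\infty$) plus Radon--Nikodym. The architecture is right and part (2) is fine. However, some steps in (1) fail as written, and they fail precisely in the non-N-function setting this paper is about.

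The main problem is the reverse inequality: you construct an \emph{exact} norming function, and in general none exists. Take $\varphi(u)=u$ and $\omega\equiv1$. Then $\Lambda_{\varphi,\omega}=L^1$, $\psi=0$ on $[0,1]$, $\psi=\infty$ on $(1,\infty)$, and $\mathcal{M}_{\psi,\omega}=L^\infty$.
\begin{itemize}
\item For $\phi(t)=e^{-t}$ the optimal $k$ is $1$, since $P_{\psi,\omega}(k\phi)=\infty$ for $k>1$. But $v=ke^{-t}<1$ for $k\le1$, so $q_-(v)=q(v)=0$ and your $f^*$ vanishes. A norming sequence must concentrate at $0$.
\item For $\phi(t)=1-e^{-t}$ the equality case is harmless ($\phi^*\equiv1$). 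However, $\int f\phi<\int|f|=\int f^*\phi^*$ for every $f\ne0$, so the alignment step fails. Indeed the $\sigma$ of \cite[Corollary 7.6]{Bennett1953} requires $\phi^*(\infty)=0$.
\end{itemize}
Since the K\"othe dual norm is a supremum, replace both steps by approximation. Use the resonance of $(0,\infty)$ instead of $\sigma$. When $v$ only approaches the right endpoint of $\{\psi<\infty\}$ as $t\to0^+$, add a shrinking spike to $f^*$ near $0$.

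Two smaller points.

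First, the remark ``since $\omega$ is decreasing, $\omega^{f^*}=\omega$'' is false; the paper makes the same slip. Take $f^*=\chi_{(0,1)}$ and strictly decreasing $\omega$. Then $s/W(s)$ increases on $(0,1]$, so $(0,1)$ is a maximal level interval and $\omega^{f^*}=W(1)$ there. Your identity $\sup\int f^*\phi^*=\sup\int f^*(\phi^*)^0$ is nevertheless true, for another reason. Let $\tilde f$ be $f^*$ with its restriction to each level interval of $\phi^*$ replaced by its $\omega$-weighted mean. Then $\tilde f$ is decreasing, $\rho_{\varphi,\omega}(k\tilde f)\le\rho_{\varphi,\omega}(kf^*)$ for all $k$ (Jensen), and $\int\tilde f\phi^*=\int f^*(\phi^*)^0$. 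Meanwhile $\int f^*\phi^*\le\int f^*(\phi^*)^0$ by Hardy's lemma.

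Second, ``Jensen on each level interval together with majorization'' does not yet give $\int\psi(\phi^*/|g|)|g|\ge\int\psi(h)\omega$, where $h=(\phi^*)^0/\omega$, for an arbitrary $g\prec\omega$. Such a $g$ need not distribute its mass over those intervals the way $\omega$ does. What works is the following.
\begin{itemize}
\item Take $u\ge0$ decreasing, constant on the level intervals of $\phi^*$, with $\int\varphi(u)\omega<\infty$.
\item Young's inequality gives $\psi(\phi^*/|g|)|g|\ge u\phi^*-\varphi(u)|g|$.
\item Hardy's lemma gives $\int\varphi(u)|g|\le\int\varphi(u)\omega$, and constancy on level intervals gives $\int u\phi^*=\int u(\phi^*)^0$. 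Hence the integral is at least $\int(uh-\varphi(u))\omega$.
\item Let $u$ increase through truncations to $q(\lambda h)$, then let $\lambda\uparrow1$.
\end{itemize}
This limit is where $\psi$ vanishing near $0$ or jumping to $+\infty$ has to be handled. For attainment at $\omega^{\phi^*}$ you also need $\omega^{\phi^*}\prec\omega$. This holds piecewise, because $\phi^*/R_n$ is decreasing on $(a_n,b_n)$ with $\int_{a_n}^{s}\phi^*/R_n\le W(a_n,s)$, and $\omega^{\phi^*}=\omega$ off the level intervals.
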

\begin{lemma}\label{0818}\cite[Lemma 6.15, Theorem 8.9]{2019abstractlorentz}
 Assume $\varphi$ is an arbitrary Orlicz function, $\omega$ is a decreasing positive weight on $R^{+}$ and $\int_{0}^{t}\omega(t)dt<\infty$ for every $t<\infty$. Then 
 $(\Lambda_{\varphi, \omega})^{\prime \prime}=(Q_{L_{\psi}, \omega}^{o})^{\prime}
 =(\mathcal{M}_{\psi, \omega}^{o})^{\prime}=\Lambda_{\varphi, \omega}$, where 
 $Q_{L_{\psi}, \omega}^{o}=(Q_{L_{\psi}, \omega}, \|\cdot\|_{Q_{L_{\psi}, \omega}}^{o})$ is defined in \cite[Definition 6.1]{2019abstractlorentz}.
\end{lemma}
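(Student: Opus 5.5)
Lemma \ref{0818} identifies the Köthe bidual of $\Lambda_{\varphi,\omega}$ with $\Lambda_{\varphi,\omega}$ itself, through the intermediate spaces $Q_{L_\psi,\omega}^{o}$ and $\mathcal{M}_{\psi,\omega}^{o}$. The plan is to derive it from general Köthe duality for rearrangement-invariant lattices, rather than by explicit computation. By Lemma 2.1(1) we already have $(\Lambda_{\varphi,\omega})'=\mathcal{M}_{\psi,\omega}^{o}$ isometrically. Hence the equalities $(\Lambda_{\varphi,\omega})''=(\mathcal{M}_{\psi,\omega}^{o})'$ are immediate once the Köthe dual of $\mathcal{M}_{\psi,\omega}^{o}$ is identified. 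The substantive claim is $(\Lambda_{\varphi,\omega})''=\Lambda_{\varphi,\omega}$ with equal norms. By the Lorentz--Luxemburg theorem this holds if and only if $\Lambda_{\varphi,\omega}$ has the Fatou property. So the first step is to verify the Fatou property.

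To verify it, take $0\le f_n\uparrow f$ a.e. with $\sup_n\|f_n\|_{\varphi,\omega}\le 1$. Then $f_n^*\uparrow f^*$ pointwise, a standard fact for rearrangements of increasing sequences. Since $\varphi$ is convex and finite on $[0,\infty)$, it is continuous where finite. If $\varphi$ jumps to $\infty$, lower semicontinuity suffices. Monotone convergence then gives
\[
\rho_{\varphi,\omega}(f/\lambda)=\lim_n\rho_{\varphi,\omega}(f_n/\lambda)\le 1
\]
for every $\lambda>1$. Therefore $\|f\|_{\varphi,\omega}\le1$ and $f\in\Lambda_{\varphi,\omega}$. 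This step needs local integrability of $\omega$ only to guarantee that $\Lambda_{\varphi,\omega}$ is a genuine Banach function space, i.e. that it contains characteristic functions of sets of finite measure and embeds in $L^1_{loc}$. This is exactly the hypothesis $\int_0^t\omega<\infty$.

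Next, we identify $Q_{L_\psi,\omega}^{o}$ with $\mathcal{M}_{\psi,\omega}^{o}$, or at least show that they have the same Köthe dual. Following \cite[Definition 6.1]{2019abstractlorentz}, $Q$ is defined through the same infimum over $g\prec\omega$ as the modular $P_{\psi,\omega}$. I would show that the two Amemiya-type norms coincide by rewriting $P_{\psi,\omega}(kf)$ as the $Q$-modular. Alternatively, it suffices to show that $Q^{o}_{L_\psi,\omega}$ and $\mathcal M^{o}_{\psi,\omega}$ have the same unit ball up to closure in measure. Köthe duals only see the order-closure, so the duals then agree.

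The main obstacle is the inequality $\|f\|_{(\mathcal M^{o}_{\psi,\omega})'}\ge\|f\|_{\varphi,\omega}$. This is the nontrivial half of the Lorentz--Luxemburg theorem, which I take as given. If one wants it directly instead, the plan is as follows. Reduce to $f=f^*$ by the resonance of $(0,\infty)$ and the measure-preserving transformation of Section 1. Then choose $h=q(f^*/\lambda)\,\omega$, with $\lambda$ slightly below $\|f\|$ and a truncation to keep things finite. This $h$ satisfies Young's equality pointwise, and taking $g=\omega$ gives $P_{\psi,\omega}(h)\le\rho_{\psi,\omega}(q(f^*/\lambda))$. Computing $\int f h$ against the Amemiya norm of $h$ then yields the bound. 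The delicate point is the case where $\varphi$ is not an N-function. Then $q$ may take value $+\infty$, or $p$ may vanish near $0$, so truncations of $f$ at both levels $\theta(f)$ and $\sup p$ are needed. Here I expect to argue by an approximation $f_n\uparrow f$ and invoke the Fatou property from the first step.
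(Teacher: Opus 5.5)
Your derivation of $(\Lambda_{\varphi,\omega})''=(\mathcal M^{o}_{\psi,\omega})'=\Lambda_{\varphi,\omega}$ is sound. It also takes a more abstract route than the one behind the citation; the paper gives no proof of its own and only quotes \cite{2019abstractlorentz}. Lemma 2.1(1) identifies $(\Lambda_{\varphi,\omega})'$ with $\mathcal M^{o}_{\psi,\omega}$ isometrically, and the Lorentz--Luxemburg theorem plus the Fatou property of the Luxemburg norm does the rest. For the Fatou step you need $\varphi$ left-continuous where it jumps to $\infty$. Alternatively, observe that replacing $\varphi$ by its lower semicontinuous regularization does not change $\|\cdot\|_{\varphi,\omega}$.

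The gap is the middle term $(Q^{o}_{L_\psi,\omega})'$. You assume $Q_{L_\psi,\omega}$ is defined by the same infimum over $g\prec\omega$ as $P_{\psi,\omega}$, but that infimum is what defines $\mathcal M_{\psi,\omega}$. The space $Q_{L_\psi,\omega}$ is the level-function space. Its modular is $\int_0^\infty\psi\bigl((f^{*})^{0}/\omega\bigr)\omega\,dt$, where $(f^{*})^{0}$ is Halperin's level function of $f^{*}$ with respect to $\omega$, and $\|\cdot\|^{o}_{Q_{L_\psi,\omega}}$ is the corresponding Amemiya norm.

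So the step you call a rewriting is the substantive identity
\[
\inf_{g\prec\omega}\int_0^\infty\psi\Bigl(\frac{f^{*}}{|g|}\Bigr)|g|\,dt=\int_0^\infty\psi\Bigl(\frac{(f^{*})^{0}}{\omega}\Bigr)\omega\,dt ,
\]
which is exactly the content of Lemma 3.1, imported by the paper from the same source. The inequality $\le$ is easy, because the inverse level function $\omega^{f^{*}}$ is admissible. The inequality $\ge$ says $\omega^{f^{*}}$ is optimal among all $g\prec\omega$, and this does not follow interval by interval. Jensen's inequality on a maximal level interval $I$ bounds $\int_I\psi(f^{*}/g)\,g$ from below only in terms of $\int_I g$. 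But $g\prec\omega$ controls only $\int_0^t g^{*}$, and $\int_I g$ can exceed $\int_I\omega$; for instance, $g\equiv W(b)/b$ on $(0,b)$ with $I=(a,b)$. A global argument across the level intervals is needed.

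You neither prove nor cite this identity. The fallback ``same unit ball up to closure in measure'' is not argued either. So $(Q^{o}_{L_\psi,\omega})'=\Lambda_{\varphi,\omega}$ remains unproved.

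Your optional direct argument also has two smaller slips. First, the norming function should be $h=p(f^{*}/\lambda)\,\omega$, not $q(f^{*}/\lambda)\,\omega$, since Young's equality reads $u\,p(u)=\varphi(u)+\psi(p(u))$. Second, $p$ cannot vanish near $0$, because $\varphi>0$ on $(0,\infty)$. The non-$N$-function degeneracies are instead $p(0)>0$, which makes $\psi=0$ on $[0,p(0)]$, and $\sup p=B<\infty$, which makes $\psi=\infty$ on $(B,\infty)$.
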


\begin{lemma} \cite[Theorem 2.5]{Kaminska199029},
	For  $f\in\Lambda_{\varphi, \omega}$, the following conditions are equivalent:\\
	(1)$\varphi \in \Delta_{2}$,\\
	(2)For arbitrary $\varepsilon>0$, there exists $\delta>0$, such that $\rho_{\varphi, \omega}(f)\geq 1-\varepsilon$ whenever $\|f\|_{\varphi, \omega}\geq 1-\delta$,\\
	(3)$\rho_{\varphi, \omega}(f)=1$ if and only if $\|f\|_{\varphi, \omega}=1$,\\
	(4)$\rho_{\varphi, \omega}(f_{n})\rightarrow 0$ if and only if $\|f_{n}\|_{\varphi, \omega}\rightarrow 0\:(n\rightarrow\infty)$.
\end{lemma}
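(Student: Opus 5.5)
The statement is the classical equivalence of Kamińska (Theorem 2.5 of \cite{Kaminska199029}). I will prove the cycle $(1)\Rightarrow(2)\Rightarrow(3)\Rightarrow(4)\Rightarrow(1)$, working throughout with the modular $\rho=\rho_{\varphi,\omega}$. Because $\rho(\lambda f)=\int_0^\infty\varphi(\lambda f^*)\omega$, the map $\lambda\mapsto\rho(\lambda f)$ is convex and nondecreasing. On the set where it is finite it is also continuous, by monotone convergence from below and dominated convergence from above.

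\textbf{$(1)\Rightarrow(2)$.} Assume $\varphi\in\Delta_2$ with constant $C$. Then $\rho(f)<\infty$ whenever $f\in\Lambda_{\varphi,\omega}$, so $\lambda\mapsto\rho(\lambda f)$ is finite and continuous on all of $(0,\infty)$. I will argue by contradiction. Suppose there are $\varepsilon>0$ and $f_n$ with $\|f_n\|\ge 1-1/n$ but $\rho(f_n)<1-\varepsilon$. I may assume $\|f_n\|\le 2$; larger norms force $\rho(f_n)>1$ by convexity and can be discarded.

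The key inequality comes from the $\Delta_2$ condition applied along the segment $[1,1+\eta]$. For $\lambda=1+\eta$, convexity together with $\varphi(2u)\le C\varphi(u)$ gives
\[
\rho((1+\eta)f)\le(1-\eta)\rho(f)+\eta\rho(2f)\le(1-\eta+C\eta)\rho(f).
\]
Choose $\eta$ so small that $(1+(C-1)\eta)(1-\varepsilon)<1$. This choice is uniform in $n$. Then $\rho((1+\eta)f_n)<1$, which forces $\|f_n\|\le 1/(1+\eta)$. That contradicts $\|f_n\|\ge 1-1/n$ once $n$ is large.

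\textbf{$(2)\Rightarrow(3)$.} If $\|f\|=1$, then (2) applied with every $\varepsilon>0$ gives $\rho(f)\ge 1$. Lower semicontinuity, i.e. Fatou's lemma applied to $\rho(f/\lambda)\le1$ as $\lambda\downarrow1$, gives $\rho(f)\le1$. Hence $\rho(f)=1$.

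Conversely, suppose $\rho(f)=1$, so that $\|f\|\le1$. If $\|f\|<1$, then $\rho(f/\|f\|)\le1$, and convexity gives $\rho(f)\le\|f\|\rho(f/\|f\|)<1$, a contradiction. Hence $\|f\|=1$.

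\textbf{$(3)\Rightarrow(4)$.} One direction always holds: if $\|f_n\|\to0$, then for $n$ large $\|f_n\|<1$, and $\rho(f_n)\le\|f_n\|\,\rho(f_n/\|f_n\|)\le\|f_n\|\to0$.

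For the other direction, suppose $\rho(f_n)\to0$ but $\|f_n\|\ge\delta>0$ along a subsequence. Set $g_n=f_n/\|f_n\|$. By (3), $\rho(g_n)=1$.

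This is the main obstacle. Condition (3) is pointwise in $f$, so extracting a uniform contradiction from it requires a construction. My plan is first to show that (3) implies $\Delta_2$ directly, and then reuse the argument for $(1)\Rightarrow(2)$ to conclude. Concretely, if $\varphi\notin\Delta_2$, I will build $f$ with $\|f\|=1$ but $\rho(f)<1$, or with $\rho(f)=1$ but $\|f\|<1$. I will treat the two regimes separately:
\begin{itemize}
\item if $\Delta_2$ fails at infinity, take $u_k\to\infty$ with $\varphi(2u_k)>2^k\varphi(u_k)$;
\item if $\Delta_2$ fails at zero, take $u_k\to0$ with the same property.
\end{itemize}
Since $W(\infty)=\infty$, I can choose disjoint intervals $(t_{k-1},t_k)$ such that $\varphi(u_k)W(t_{k-1},t_k)\approx2^{-k}$. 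Define $f^*=\sum u_k\chi_{(t_{k-1},t_k)}$, adjusting if necessary so that $f^*$ is nonincreasing. Then $\rho(f)\le1/2$ while $\rho(\lambda f)=\infty$ for every $\lambda>1$, so $\|f\|=1$ with $\rho(f)<1$, contradicting (3).

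\textbf{$(4)\Rightarrow(1)$.} I will use the same sequences $u_k$. Taking $f_n$ as the tails of the construction above gives $\rho(f_n)\to0$ while $\|f_n\|\ge1/2$, which contradicts (4). So (4) forces $\varphi\in\Delta_2$.

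The delicate points are keeping the constructed $f^*$ nonincreasing and handling the tail regime, which relies on $W(\infty)=\infty$.
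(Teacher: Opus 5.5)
The paper does not prove this lemma; it quotes it from Kami\'nska, so your argument has to stand on its own. Your steps $(1)\Rightarrow(2)$, $(2)\Rightarrow(3)$ and the easy half of $(4)$ are correct.

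\textbf{The gap is in $(3)\Rightarrow(1)$}, and your $(3)\Rightarrow(4)$ also depends on this step. Suppose you take $\varphi(2u_k)>2^k\varphi(u_k)$ and $\varphi(u_k)W(t_{k-1},t_k)=2^{-k-1}$. Then you only get $\rho(f)\le\frac12$ and $\rho(2f)=\infty$, that is, $\frac12\le\|f\|\le 1$. Nothing controls $\varphi(\lambda u_k)$ for $1<\lambda<2$, so the claim that $\rho(\lambda f)=\infty$ for every $\lambda>1$ does not follow. It can in fact fail. Take $u_k=8^{-k}$ and let $\varphi$ be piecewise linear with slope $\sigma_k$ on $[\frac{3}{2}u_{k+1},\frac{3}{2}u_k)$, where $\sigma_{k-1}=2^{k+2}\sigma_k$. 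Then $\varphi(2u_k)\ge 2^{k+1}\varphi(u_k)$, but $\varphi(\frac{3}{2}u_k)\le 2\varphi(u_k)$. Hence $\rho(\frac{3}{2}f)\le 1$ and $\|f\|\le\frac{2}{3}$, so this $f$ contradicts nothing. Normalizing to $f/\|f\|$ does not help, because you lose control of the modular.

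The fix is as follows. If $\Delta_2$ fails at $0$ (resp.\ at $\infty$), then for every $l>1$ the ratio $\varphi(lu)/\varphi(u)$ is unbounded as $u\to 0$ (resp.\ $u\to\infty$). Otherwise, iterating $\varphi(lu)\le C_l\varphi(u)$ until $l^m\ge 2$ would give $\Delta_2$. So choose $u_k$ with $\varphi((1+\frac{1}{k})u_k)>2^k\varphi(u_k)$. Then $\rho(f)\le\frac12$. For each $\lambda>1$, every $k\ge 1/(\lambda-1)$ contributes at least $2^k\varphi(u_k)W(t_{k-1},t_k)=\frac12$ to $\rho(\lambda f)$. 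Hence $\rho(\lambda f)=\infty$ and $\|f\|=1$, which is the counterexample to (3) that you need.

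There are also smaller points you flagged but left open.

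\textbf{Placement when $u_k\to\infty$.} The blocks must accumulate at $0$: take $f=\sum u_k\chi_{(s_k,s_{k-1})}$ with $s_k\downarrow 0$ and $W(s_k,s_{k-1})=2^{-k-1}/\varphi(u_k)$. This sequence is summable, and $f=f^*$ automatically. The hypothesis $W(\infty)=\infty$ is needed only in the regime $u_k\to 0$.

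\textbf{Tails in $(4)\Rightarrow(1)$ when $u_k\to 0$.} The rearrangement $f_n^*$ of a tail is the tail shifted to start at $0$. Since $\omega$ decreases, $\rho(f_n)$ is at least the tail of the series, not equal to it. So $\rho(f_n)\to 0$ needs dominated convergence ($f_n^*\le f^*$ and $f_n^*\to 0$ pointwise). It is simpler to use single blocks $f_n=u_n\chi_{(0,a_n)}$ with $\varphi(2u_n)W(a_n)=1$. Then $\rho(2f_n)=1$, so $\|f_n\|=\frac12$, and $\rho(f_n)<2^{-n}$.

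\textbf{The step $(1)\Rightarrow(4)$.} This is the iterated $\Delta_2$ bound $\rho(2^m f_n)\le C^m\rho(f_n)\to 0$, which gives $\|f_n\|\le 2^{-m}$ eventually. It is not literally a reuse of the $(1)\Rightarrow(2)$ argument.
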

For $x\in\Lambda_{\varphi, \omega}^{o}$, define
\begin{align*}
	&k^{* }=k^{* }(x)=\inf\{k>0: \rho_{\psi, \omega}(p(kx))\geq 1\}.\\
	&k^{**}=k^{**}(x)=\sup\{k>0: \rho_{\psi, \omega}(p(kx))\leq 1\}.\\
	&K(x)=[K^{*}(x), k^{**}(x)]
\end{align*}
\begin{lemma}\cite[Theorem 3.1, Theorem 3.2]{Foralewski2023}
	Let $\varphi$ be an Orlicz function and $\omega$ be a decreasing weight. $x\in\Lambda_{\varphi, \omega}^{o}$ and $K(x)\neq \emptyset$.
	Then $\|x\|^{o}_{\varphi, \omega}=\frac{1}{k}\left( 1+\rho_{\varphi, \omega}(kx) \right)$ if and only if $k\in K(x)$.
\end{lemma}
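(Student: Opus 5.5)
The plan is to study the function
$$h(k)=\frac{1}{k}\bigl(1+\rho_{\varphi,\omega}(kx)\bigr),\qquad k>0,$$
whose infimum is $\|x\|^{o}_{\varphi,\omega}$ by the Amemiya-type formula (1.1). I will compute its one-sided derivatives and read off exactly where the infimum is attained. Since $\varphi$ is non-decreasing and convex, $(kx)^{*}=kx^{*}$, so $\rho_{\varphi,\omega}(kx)=\int_0^\infty\varphi(kx^{*})\omega$. Likewise $(p(k|x|))^{*}=p(kx^{*})$, because $p$ is non-decreasing and right-continuous. Hence $\rho_{\psi,\omega}(p(kx))=\int_0^\infty\psi(p(kx^{*}))\omega$, and this quantity is non-decreasing in $k$.

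\emph{Step 1 (one-sided derivatives of the modular).} On the set $\{k:\rho_{\varphi,\omega}(kx)<\infty\}$ I will show
$$\frac{d^{+}}{dk}\rho_{\varphi,\omega}(kx)=\int_0^\infty x^{*}p(kx^{*})\omega,\qquad \frac{d^{-}}{dk}\rho_{\varphi,\omega}(kx)=\int_0^\infty x^{*}p_{-}(kx^{*})\omega.$$
For each $t$, the difference quotients $\bigl(\varphi((k+\varepsilon)x^{*}(t))-\varphi(kx^{*}(t))\bigr)/\varepsilon$ are monotone in $\varepsilon$ by convexity. Therefore monotone convergence, applied from the appropriate side, gives the interchange of limit and integral, with the value $+\infty$ allowed for the right derivative.

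Young's equality $uv=\varphi(u)+\psi(v)$ holds for $v\in[p_-(u),p(u)]$, so pointwise
$$kx^{*}p(kx^{*})=\varphi(kx^{*})+\psi(p(kx^{*})),$$
and the same identity holds with $p_-$ in place of $p$. Substituting gives
$$k^{2}h'_{+}(k)=k\frac{d^{+}}{dk}\rho_{\varphi,\omega}(kx)-1-\rho_{\varphi,\omega}(kx)=\rho_{\psi,\omega}(p(kx))-1,$$
and similarly $k^{2}h'_{-}(k)=\rho_{\psi,\omega}(p_{-}(kx))-1$.

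\emph{Step 2 (shape of $h$).} Put $r(k)=\rho_{\psi,\omega}(p(kx))$, which is non-decreasing in $k$. Since $p_-(u)=\lim_{s\uparrow u}p(s)$ and $\psi$ is left-continuous, monotone convergence gives $\rho_{\psi,\omega}(p_-(kx))=r(k-)$. Thus $h$ is non-increasing on $\{k: r(k)\le 1\}$ and non-decreasing on $\{k: r(k-)\ge 1\}$. A point $k$ is a global minimiser of $h$ if and only if $r(k-)\le 1\le r(k)$. Here "only if" follows from $h'_+(k)\ge0\ge h'_-(k)$. For "if", $h$ is a ratio, not a convex function, so I will use monotonicity on each side, obtained by integrating the one-sided derivatives of the locally Lipschitz function $h$ on the finiteness interval.

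Unwinding the definitions, the condition $r(k-)\le1\le r(k)$ is exactly $k^{*}(x)\le k\le k^{**}(x)$:
\begin{itemize}
\item $k\ge k^{*}$ is equivalent to $r(k)\ge 1$, using right-continuity of $r$, which comes from right-continuity of $p$ and $\psi$ together with dominated or monotone convergence;
\item $k\le k^{**}$ is equivalent to $r(k-)\le1$.
\end{itemize}
The hypothesis $K(x)\neq\emptyset$ guarantees that this crossing point actually exists.

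\emph{Main obstacle.} I expect the difficulty to lie in the degenerate cases allowed because $\varphi$ need not be an N-function. First, $\rho_{\varphi,\omega}(kx)$ may be $+\infty$ for $k>1/\theta(x)$. Second, if $p$ is bounded, then $\psi$ takes the value $\infty$ beyond $\lim p$, so $r$ can jump to $\infty$. Third, if $\lim_{t\to0}\varphi(t)/t>0$, then $p(0+)>0$.

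I would handle these as follows. If $\rho_{\varphi,\omega}(kx)=\infty$, then $h(k)=\infty$, so such $k$ is not a minimiser; one also checks that $r(k-)>1$ there whenever the relevant points lie in $K(x)$. At the endpoint $k_0=1/\theta(x)$, I would verify the left-derivative formula directly, since only left difference quotients are finite there. For the boundary cases I would check the monotone-convergence interchanges carefully, especially where the integrands take infinite values.
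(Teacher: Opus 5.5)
The paper gives no proof of this lemma; it is imported from Foralewski's work, so your argument has to stand on its own. Your strategy of differentiating the Amemiya function $h$ is sound at interior points of $I=\{k>0:\rho_{\varphi,\omega}(kx)<\infty\}$. There the pointwise Young equality also holds on $\{x^{*}=0\}$, since $\psi$ vanishes on $[0,p(0)]$.

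\textbf{The gap is at the right endpoint} $k_0=1/\theta(x)$, in the case $\rho_{\varphi,\omega}(k_0x)<\infty=\rho_{\varphi,\omega}(kx)$ for $k>k_0$.
\begin{itemize}
\item At $k_0$ you have $h'_{+}(k_0)=+\infty$, simply because $h=\infty$ to the right.
\item However, $\int_0^\infty x^{*}p(k_0x^{*})\omega$, and hence $r(k_0)$, can be finite and even smaller than $1$. So your Step 1 formula ``with the value $+\infty$ allowed'' is false at $k_0$.
\item Your claim that $r$ is right-continuous also fails there: there is no integrable majorant, and $r$ jumps from $r(k_0)$ to $+\infty$.
\end{itemize}
This happens even for N-functions. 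Take $\omega\equiv 1$, $\varphi(u)=e^{u}-1-u$, and $x(t)=\log\frac{1}{t\log^{3}(1/t)}$ on $(0,e^{-5})$, with $x=0$ elsewhere. Then $\theta(x)=1$ and $\rho_{\varphi,\omega}(x)<\infty$. Moreover
\[
r(1)\le\int_0^{e^{-5}}x e^{x}\,dt\le\int_0^{e^{-5}}\frac{dt}{t\log^{2}(1/t)}=\frac15 .
\]
Yet $k=1$ is the unique minimiser of $h$, and $K(x)=\{1\}$.

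So your criterion ``$k$ is a global minimiser iff $r(k-)\le 1\le r(k)$'' is false at $k_0$. So is your unwinding ``$k\ge k^{*}\iff r(k)\ge 1$''. The two errors cancel, which is why the final statement survives, but the argument as written does not prove it. This endpoint issue has nothing to do with $\varphi$ failing to be an N-function, which is where your ``main obstacle'' paragraph locates the difficulty.

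\textbf{The repair} is to work with $r(k+)=\lim_{s\downarrow k}r(s)$ in place of $r(k)$.
\begin{itemize}
\item Then $k^{2}h'_{+}(k)=r(k+)-1$ holds on all of $I$. At interior points $r(k+)=r(k)$ by dominated convergence; at $k_0$ both sides are $+\infty$.
\item The minimality criterion becomes $r(k-)\le 1\le r(k+)$.
\item The equivalence $k\ge k^{*}\iff r(k+)\ge 1$ follows from the monotonicity of $r$ alone, with no continuity needed.
\end{itemize}
The one new fact you must supply is that $r\equiv+\infty$ on $(k_0,\infty)$ whenever $\rho_{\varphi,\omega}(k_0x)<\infty$. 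For $s>k_0$, the inequality $\varphi(su)-\varphi(k_0u)\le (s-k_0)u\,p(su)$ together with Young's equality gives
\[
\psi(p(su))\ge\frac{k_0}{s-k_0}\varphi(su)-\frac{s}{s-k_0}\varphi(k_0u).
\]
Integrating against $\omega$ with $u=x^{*}$ yields $r(s)=\infty$. This is exactly what guarantees $k^{*}\le k_0$. Hence $k_0\in K(x)$ iff $r(k_0-)\le 1$ iff $k_0$ minimises $h$.
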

\begin{lemma}\cite[Theorem 2.8]{Wang2024}
	For arbitrary $f\in \mathcal{M}_{\varphi, \omega}^{o}$,
	if there exists $k>0$ such that
	$$
	\int_{0}^{\infty}\psi(p(\frac{kf^{*}(t)}{\omega^{f^{*}}}))\omega^{f^{*}}(t)=1.
	$$
	Then
	$$
	\|f\|_{\mathcal{M}_{\varphi, \omega}}^{o}=\frac{1}{k}\left(1+P_{\varphi, \omega}(kf)\right).
	$$
\end{lemma}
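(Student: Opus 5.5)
The plan is to read this as an Amemiya-type statement: the infimum defining $\|f\|_{\mathcal{M}_{\varphi,\omega}}^{o}$ is attained at the given $k$. I will show that for every $h>0$
$$
\frac1h\bigl(1+P_{\varphi,\omega}(hf)\bigr)\ \ge\ \int_0^\infty f^*(t)\,v(t)\,dt,
$$
with equality when $h=k$. Here $v$ is the function built from $k$ below. The tool is Young's inequality used pointwise, together with its equality case, which is recalled in the preliminaries.

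\textbf{Step 1 (explicit form of the modular).} I first express $P_{\varphi,\omega}(hf)$ through the inverse level function. With the usual normalization, the infimum in $P_{\varphi,\omega}$ is taken over $\int\varphi(f^*/|g|)|g|$ with $g\prec\omega$. The known result of Kami\'nska--Raynaud (the source cited for the level function) says this infimum is attained at $g=\omega^{f^*}$, and that $\omega^{f^*}\prec\omega$. This gives
$$
P_{\varphi,\omega}(hf)=\int_0^\infty \varphi\Bigl(\frac{h f^*(t)}{\omega^{f^*}(t)}\Bigr)\,\omega^{f^*}(t)\,dt \quad\text{for every } h>0 .
$$
The minimizer does not depend on $h$: the maximal level intervals of $hf^*$ with respect to $\omega$ are those of $f^*$, since the ratios $R$ scale by $h$. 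So $\omega^{hf^*}=\omega^{f^*}$. This step is the main obstacle. If the cited result is only available for N-functions, I would re-derive it by the standard argument. On each maximal level interval, Jensen's inequality together with the submajorization $g\prec\omega$ shows that replacing $g$ by $\omega^{f^*}$ does not increase the integral.

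\textbf{Step 2 (choice of the dual function).} Put $u=kf^*/\omega^{f^*}$ and $v=p(u)$. By the equality case of Young's inequality, $uv=\varphi(u)+\psi(v)$ pointwise. The hypothesis says exactly that $\int\psi(v)\,\omega^{f^*}=1$. Integrating the equality against $\omega^{f^*}$ and using Step 1 at $h=k$ gives
$$
P_{\varphi,\omega}(kf)=k\int f^*v-1, \qquad\text{that is,}\qquad \frac1k\bigl(1+P_{\varphi,\omega}(kf)\bigr)=\int f^* v .
$$
In particular $\int f^*v<\infty$, because $f\in\mathcal{M}_{\varphi,\omega}^{o}$ makes the left side finite whenever $P_{\varphi,\omega}(kf)<\infty$. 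If $P_{\varphi,\omega}(kf)=\infty$, the same identity forces $\int f^*v=\infty$. I will treat that degenerate case separately by showing it is incompatible with $f\in\mathcal{M}^{o}_{\varphi,\omega}$.

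\textbf{Step 3 (lower bound for arbitrary $h$).} For any $h>0$, Young's inequality gives pointwise
$$
\varphi\bigl(hf^*/\omega^{f^*}\bigr)\ \ge\ \bigl(hf^*/\omega^{f^*}\bigr)\,v-\psi(v).
$$
I multiply by $\omega^{f^*}$, integrate, and use Step 1 to get $P_{\varphi,\omega}(hf)\ge h\int f^*v-1$. Hence $\frac1h\bigl(1+P_{\varphi,\omega}(hf)\bigr)\ge\int f^*v$. Taking the infimum over $h$ in the definition of $\|f\|^{o}_{\mathcal{M}_{\varphi,\omega}}$ and comparing with Step 2 shows that the infimum equals $\int f^*v=\frac1k\bigl(1+P_{\varphi,\omega}(kf)\bigr)$, which is the claim.

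No N-function assumption enters anywhere. Only convexity of $\varphi$, Young's inequality and its equality case, and the attainment of the infimum in $P_{\varphi,\omega}$ at $\omega^{f^*}$ are used.
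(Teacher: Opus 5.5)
The paper does not prove this lemma; it is imported from \cite[Theorem 2.8]{Wang2024}. Your argument is the standard proof of attainment in an Amemiya-type formula: Young's inequality against $v=p(kf^*/\omega^{f^*})$ gives the lower bound, and Young's equality case at $h=k$ gives equality. Steps 1--3 are sound as far as they go.

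The genuine gap is the step you defer. You must show $P_{\varphi,\omega}(kf)<\infty$, and this is not a formality: otherwise the asserted identity would read $\|f\|^{o}_{\mathcal{M}_{\varphi,\omega}}=\infty$. It does follow from the hypothesis.

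Pick $\lambda>0$ with $P_{\varphi,\omega}(\lambda f)<\infty$. If $\lambda\ge k$, monotonicity finishes, so let $\lambda<k$. Since $p$ is non-decreasing and $ku\,p(ku)=\varphi(ku)+\psi(p(ku))$, every $u\ge 0$ with $\psi(p(ku))<\infty$ satisfies
\[
\varphi(ku)-\varphi(\lambda u)=\int_{\lambda u}^{ku}p(t)\,dt\le (k-\lambda)u\,p(ku)=\frac{k-\lambda}{k}\bigl(\varphi(ku)+\psi(p(ku))\bigr),
\]
that is, $\frac{\lambda}{k}\varphi(ku)\le\varphi(\lambda u)+\frac{k-\lambda}{k}\psi(p(ku))$. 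Put $u=f^*/\omega^{f^*}$; the finiteness condition holds a.e.\ by the hypothesis. Multiplying by $\omega^{f^*}$ and integrating gives
\[
P_{\varphi,\omega}(kf)\le\int_0^\infty\varphi\Bigl(\frac{kf^*}{\omega^{f^*}}\Bigr)\omega^{f^*}\,dt\le\frac{k}{\lambda}P_{\varphi,\omega}(\lambda f)+\frac{k-\lambda}{\lambda}<\infty ,
\]
where the first inequality holds because $\omega^{f^*}\prec\omega$. With this inserted, Step 2 holds with all quantities finite and the proof closes.

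Two smaller points on Step 1.

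First, the paper states the minimization lemma for arbitrary Orlicz $\varphi$, but only under $P_{\varphi,\omega}(hf)<\infty$. That is enough. When $P_{\varphi,\omega}(hf)=\infty$, the inequality of Step 3 is trivial. The relation $\omega^{f^*}\prec\omega$ comes from applying the lemma to $\lambda f$, using $\omega^{\lambda f^*}=\omega^{f^*}$.

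Second, your Jensen fallback would not work as described. The condition $g\prec\omega$ controls only initial segments of $g^*$, and gives no bound $\int_{a_n}^{b_n}|g|\le W(a_n,b_n)$ on individual level intervals. If you want to bypass the lemma, Young's inequality does it again: for $g\prec\omega$,
\[
\int_0^\infty\varphi\Bigl(\frac{hf^*}{|g|}\Bigr)|g|\,dt\ge h\int_0^\infty f^*v\,dt-\int_0^\infty\psi(v)|g|\,dt\ge h\int_0^\infty f^*v\,dt-\int_0^\infty\psi(v)\,\omega\,dt=h\int_0^\infty f^*v\,dt-1 .
\]
The middle inequality is the Hardy--Littlewood inequality followed by Hardy's lemma, valid because $\psi(v)$ is non-increasing (since $f^*/\omega^{f^*}=(f^*)^0/\omega$ is). The final equality holds because $v$ is constant on each maximal level interval, where $\omega$ and $\omega^{f^*}$ have the same integral, so $\int_0^\infty\psi(v)\,\omega\,dt=\int_0^\infty\psi(v)\,\omega^{f^*}dt=1$.
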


For arbitrary $f\in \mathcal{M}_{\varphi, \omega}$, define
\begin{align*}
	k^{*}_{\mathcal{M}}=k^{*}_{\mathcal{M}}(v) &:=\inf\{k>0: \int_{0}^{\infty}\psi(p(\frac{kf^{*}(t)}{\omega^{f^{*}}(t)}))\omega^{f^{*}}(t)dt\geq 1\},\\
	k^{**}_{\mathcal{M}}=k^{**}_{\mathcal{M}}(v)  &:=\sup\{k>0: \int_{0}^{\infty}\psi(p(\frac{kf^{*}(t)}{\omega^{f^{*}}(t)}))\omega^{f^{*}}(t)dt\leq 1\}.
\end{align*}
Obviously, $k^{*}_{\mathcal{M}}\leq k^{**}_{\mathcal{M}}$. Let $K_{\mathcal{M}}=[k_{\mathcal{M}}^{*}, k^{**}_{\mathcal{M}}]$. If $k^{**}_{\mathcal{M}}<\infty$,
then $K_{\mathcal{M}}\neq \emptyset$.

\begin{lemma}\cite[Theorem 9, p. 36]{kan1982},
	For any $f\in (\Lambda_{\varphi, \omega}^{o})^{*}$, f has a unique decomposition
	$$
	f=L_{g}+s,\;\;g\in \mathcal{M}_{\psi, \omega},\: s\in F
	$$
	where s is singular and $F$ denotes the set of singular functionals of $x$.
\end{lemma}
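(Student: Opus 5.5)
The plan is to obtain the decomposition from the band structure of the dual of a Banach lattice, and then to identify the order-continuous part with integral functionals using the K\"othe duality in the first lemma of this section. The space $X=\Lambda_{\varphi,\omega}^{o}$, with either norm, is a Banach function lattice over $(R^{+},\mu)$: it is solid and rearrangement invariant, and $\|\cdot\|^{o}_{\varphi,\omega}$ is a lattice norm. Hence $X^{*}$ coincides with the order dual $X^{\sim}$ and is a Dedekind complete vector lattice. Inside $X^{*}$, consider the set $X^{*}_{n}$ of order-continuous (normal) functionals, namely those $f$ with $f(x_{\alpha})\to 0$ whenever $x_{\alpha}\downarrow 0$ in order. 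This set is a band of $X^{*}$.

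By the Riesz decomposition theorem, every band in a Dedekind complete vector lattice is a projection band. So
$$X^{*}=X^{*}_{n}\oplus (X^{*}_{n})^{d},$$
and every $f\in X^{*}$ decomposes uniquely as $f=f_{n}+s$ with $f_{n}\in X^{*}_{n}$ and $s\in (X^{*}_{n})^{d}$. We take $F=(X^{*}_{n})^{d}$ to be the set of singular functionals; these are the functionals disjoint from every normal one. Uniqueness of the decomposition is then immediate from the direct sum.

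It remains to show that $f_{n}=L_{g}$ for some $g\in\mathcal{M}_{\psi,\omega}$.
\begin{enumerate}
\item Splitting $f_{n}=f_{n}^{+}-f_{n}^{-}$, we may assume $f_{n}\ge 0$.
\item For a measurable $A\subset R^{+}$ of finite measure, $\chi_{A}\in X$, since $\rho_{\varphi,\omega}(\lambda\chi_{A})=\varphi(\lambda)W(\mu(A))<\infty$ by local integrability of $\omega$. On each interval $(0,N)$ define $\nu(A)=f_{n}(\chi_{A})$.
\item Order continuity of $f_{n}$ makes $\nu$ countably additive, because $\chi_{\bigcup_{k>m}A_{k}}\downarrow 0$ in order. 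Moreover $\nu\ll\mu$, so the Radon--Nikodym theorem gives $g_{N}\ge 0$ with $\nu(A)=\int_{A}g_{N}\,d\mu$. These $g_{N}$ are consistent and patch together to a single $g\ge0$.
\item Linearity gives $f_{n}(x)=\int xg$ for simple $x$ with finite-measure support.
\item For $0\le x\in X$, choose such simple functions $x_{k}\uparrow x$. Order continuity of $f_{n}$ together with monotone convergence then gives $f_{n}(x)=\int xg$ on the whole of $X$.
\end{enumerate}
Consequently $\int|x||g|\le\|f_{n}\|\,\|x\|$ for all $x\in X$, so $g$ belongs to the K\"othe dual $X'$. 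By the first lemma of this section, $X'=\mathcal{M}_{\psi,\omega}$ with the appropriate norm, so $f_{n}=L_{g}$ with $g\in\mathcal{M}_{\psi,\omega}$.

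I expect the main obstacle to be this identification of the normal band with $\{L_{g}:g\in\mathcal{M}_{\psi,\omega}\}$. Three points need care there:
\begin{itemize}
\item passing from simple functions to all of $X$ without any $\Delta_{2}$ or N-function assumption, which is why the argument relies only on order continuity and the Fatou-type monotone convergence;
\item checking that every $L_{g}$ with $g\in\mathcal{M}_{\psi,\omega}$ is in fact order continuous, which follows from dominated convergence since $\int|x g|<\infty$;
\item confirming via Lemma \ref{0818} that $X'=\mathcal{M}_{\psi,\omega}$ exactly, so that no integral functional is missed.
\end{itemize}
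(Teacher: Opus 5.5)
Your proposal is correct and is essentially the standard argument behind the Kantorovich--Akilov theorem that the paper cites without proof: the Riesz band decomposition $X^{*}=X^{*}_{n}\oplus (X^{*}_{n})^{d}$, followed by identifying the normal band with $\{L_{g}: g\in\mathcal{M}_{\psi,\omega}\}$ via Radon--Nikodym and the K\"othe duality of the first lemma. One point worth adding is that your $F=(X^{*}_{n})^{d}$ agrees with the notion of singularity the paper later uses (vanishing on $E_{\varphi,\omega}$, as in Chen's book): functionals in $(X^{*}_{n})^{d}$ vanish on the order-continuous part of $X$, which contains $E_{\varphi,\omega}$, and an integral functional $L_{g}$ that vanishes on every $\chi_{A}$ with $\mu(A)<\infty$ forces $g=0$.
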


\begin{lemma}\cite[Theorem 2.48]{Chen1996}
	For $x\in \Lambda_{\varphi, \omega}$ satisfying $\theta(x)\neq 0$,  there exist two singular functionals $s_{1}$ and $s_{2}$ such that  $s_{1}\neq s_{2}$ and $s_{1}(x)=s_{2}(x)=\theta(x)$.
\end{lemma}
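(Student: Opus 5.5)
The plan is to realise the two singular functionals by Hahn--Banach, dominated by the seminorm $\theta$, on a two-dimensional extension of $E_{\varphi,\omega}$. I read the statement as also asking for singular functionals of norm one, i.e.\ $\|s_i\|=1$.

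\textbf{Step 1: $\theta$ is the right seminorm.} I would check three facts.
\begin{itemize}
\item $\theta$ is a seminorm on $\Lambda_{\varphi,\omega}$. Subadditivity follows from convexity of $\varphi$ together with the inequality $(f+g)^*(t)\le f^*(t/2)+g^*(t/2)$, or from the standard identification of $\theta(x)$ with $\mathrm{dist}(x,E_{\varphi,\omega})$.
\item $\theta$ vanishes exactly on $E_{\varphi,\omega}$.
\item $\theta\le\|\cdot\|_{\varphi,\omega}\le\|\cdot\|^{o}_{\varphi,\omega}$.
\end{itemize}
Consequently, any linear functional $s$ with $|s|\le\theta$ is bounded, annihilates $E_{\varphi,\omega}$ and is therefore singular, and has norm at most $1$. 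Monotonicity $|f|\le|g|\Rightarrow\theta(f)\le\theta(g)$ is immediate.

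\textbf{Step 2: split $x$ into two pieces.} I want to write $|x|$ over disjoint measurable sets $A,B$ with $A\cup B=\mathrm{supp}\,x$ and $\theta(x\chi_A)=\theta(x\chi_B)=\theta(x)$. Here I would use the measure-preserving transformation $\sigma$ with $|x|=x^*\circ\sigma$ on the support, given by \cite[Corollary 7.6]{Bennett1953}; this requires $x^*(\infty)=0$, and otherwise the problem reduces to the trivial case where $x^*$ is bounded below on $\mathbb{R}^+$.

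Take $A^*=\bigcup_{k\in\mathbb{Z}}[2^k,2^{k+1/2})$ and $B^*=(0,\infty)\setminus A^*$, and put $A=\sigma^{-1}(A^*)$, $B=\sigma^{-1}(B^*)$. Both $A^*$ and $B^*$ occupy a fixed proportion of every interval $(0,t)$: $|A^*\cap(0,t)|\ge t/C$ for an absolute constant $C$, and similarly for $B^*$. Since $x^*$ is nonincreasing, this gives
\[
(x\chi_A)^*(t)=(x^*\chi_{A^*})^*(t)\ge x^*(Ct).
\]
Because $\omega$ is decreasing, the substitution $s=Ct$ yields
\[
\rho_{\varphi,\omega}\!\left(\tfrac{x\chi_A}{\lambda}\right)\ge\int_0^\infty\varphi\!\left(\tfrac{x^*(Ct)}{\lambda}\right)\omega(t)\,dt\ge\tfrac1C\,\rho_{\varphi,\omega}\!\left(\tfrac{x}{\lambda}\right).
\]
Hence $\theta(x\chi_A)\ge\theta(x)$, and monotonicity gives equality; the same holds for $B$.

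\textbf{Step 3: Hahn--Banach.} On $Y=E_{\varphi,\omega}\oplus\mathrm{span}\{x\chi_A,x\chi_B\}$ define
\[
s_1(e+a\,x\chi_A+b\,x\chi_B)=a\,\theta(x),\qquad s_2(e+a\,x\chi_A+b\,x\chi_B)=b\,\theta(x).
\]
These are well defined: if $a\,x\chi_A+b\,x\chi_B\in E_{\varphi,\omega}$ with, say, $a\ne0$, then $x\chi_A\in E_{\varphi,\omega}$ by monotonicity, contradicting $\theta(x\chi_A)=\theta(x)>0$. They are dominated by $\theta$, because monotonicity and Step 2 give
\[
\theta(e+a\,x\chi_A+b\,x\chi_B)\ge\max(|a|,|b|)\,\theta(x).
\]
Extend both by Hahn--Banach with the seminorm $\theta$. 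By Step 1 each extension is singular with norm at most $1$, and $s_i(x)=\theta(x)$ since $x=x\chi_A+x\chi_B$. Normalising $x$ (or using $\theta(x)=\|x\|$ when $x\in S$ is not order continuous) gives norm exactly $1$. The functionals differ because $s_1(x\chi_A)=\theta(x)\neq0=s_2(x\chi_A)$.

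\textbf{Main obstacle.} I expect Step 2 to be the hardest: choosing $A,B$ so that each piece keeps the full non-order-continuous part of $x$. This has to work whether the failure of $x\in E_{\varphi,\omega}$ comes from the singularity of $x^*$ near $0$ or from its tail at infinity. The scale-invariant choice of $A^*$ together with the dilation estimate for decreasing $\omega$ is designed to handle both cases simultaneously.
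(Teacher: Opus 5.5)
Your argument is correct for the lemma as stated, and it supplies something the paper does not. The paper gives no proof here, only a citation of \cite[Theorem 2.48]{Chen1996}, which is an Orlicz-space result.

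In $L_M$, the splitting of $x$ into two disjoint pieces each carrying the full value $\theta(x)$ can be obtained from additivity of the Orlicz modular over disjoint sets. One distributes infinitely many disjoint blocks, on each of which the modular at level $\lambda_n\uparrow\theta(x)$ is at least $1$, alternately between two sets. The same Hahn--Banach step with the seminorm $\theta$ then finishes.

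That additivity fails for $\rho_{\varphi,\omega}$, and your Step 2 replaces it. Transporting the scale-invariant set $A^*$ through $\sigma$ gives $\mu_{x^*\chi_{A^*}}\ge C^{-1}\mu_{x^*}$, hence $(x\chi_A)^*(t)\ge x^*(Ct)$. Monotonicity of $\omega$ then turns this into $\rho_{\varphi,\omega}(x\chi_A/\lambda)\ge C^{-1}\rho_{\varphi,\omega}(x/\lambda)$. This is exactly the ingredient needed to transfer the cited result to $\Lambda_{\varphi,\omega}$. It also treats blow-up of $x^*$ at $0$ and at $\infty$ simultaneously, with no case analysis.

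Two side remarks need correcting.

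First, your justification of $\|s_i\|=1$ does not work. Normalising $x$ leaves the ratio $s_i(x)/\|x\|=\theta(x)/\|x\|$ unchanged. Moreover, $\theta(x)=\|x\|$ is false for general $x\in S(\Lambda_{\varphi,\omega})\setminus E_{\varphi,\omega}$: add a large element of $E_{\varphi,\omega}$ to a function with small $\theta$ and normalise. The norm-one claim is nevertheless true, via the distance formula $\theta(y)=\mathrm{dist}(y,E_{\varphi,\omega})$ listed among the paper's auxiliary lemmas (valid for both norms). Since $s_1$ vanishes on $E_{\varphi,\omega}$, you have $s_1(x\chi_A-e)=\theta(x)$ for every $e\in E_{\varphi,\omega}$. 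Hence $\|s_1\|\ge\theta(x)/\mathrm{dist}(x\chi_A,E_{\varphi,\omega})=\theta(x)/\theta(x\chi_A)=1$, and likewise for $s_2$ with $x\chi_B$.

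Second, the case $x^*(\infty)>0$ is not trivial but empty. Since $W(\infty)=\infty$, $\rho_{\varphi,\omega}(x/\lambda)\ge\varphi(x^*(\infty)/\lambda)W(\infty)=\infty$ for every $\lambda>0$. So no such $x$ lies in $\Lambda_{\varphi,\omega}$, and the transformation $\sigma$ is always available.
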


For $f\in(\Lambda_{\varphi, \omega})^{*}$, let  $\|\cdot\|=\|\cdot\|_{(\Lambda_{\varphi, \omega}^{o})^{*}}$ and $\|\cdot\|^{o}=\|\cdot\|_{(\Lambda_{\varphi, \omega})^{*}}$.
\begin{lemma}\cite[Corollary 1.49]{Chen1996}
	$\|f\|=\|f\|^{o}$ if and only if $f\in F$.
\end{lemma}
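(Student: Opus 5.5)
The target is the last auxiliary lemma: for $f\in(\Lambda_{\varphi,\omega})^{*}$, $\|f\|=\|f\|^{o}$ if and only if $f\in F$. Here $\|\cdot\|$ is the norm dual to the Luxemburg norm and $\|\cdot\|^{o}$ the norm dual to the Orlicz norm. The plan is to use the unique decomposition $f=L_{g}+s$ from the decomposition lemma, with $g\in\mathcal{M}_{\psi,\omega}$ and $s\in F$, and then compute both dual norms of $f$ in terms of $g$ and $s$.

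The central step is a pair of norm formulas:
\[
\|f\|=\|g\|_{\mathcal{M}_{\psi,\omega}}^{o}+\|s\|, \qquad \|f\|^{o}=\inf\{\lambda>0:\ P_{\psi,\omega}(g/\lambda)+\|s\|/\lambda\le 1\},
\]
together with the fact that on $F$ the two dual norms coincide:
\[
\|s\|=\|s\|^{o}=\sup\{s(x):\ \rho_{\varphi,\omega}(x)<\infty\}.
\]
I will prove these as in \cite[Chapter 1]{Chen1996}. The common tool is that $s$ vanishes on $E_{\varphi,\omega}$, so for any $x$ one may modify $x$ on a set where $x^{*}$ is large (near the "tail at infinity" of the modular) without changing $s(x)$. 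This lets us choose $x$ that nearly norms $L_{g}$ on $E_{\varphi,\omega}$ and $x'$ that nearly norms $s$, with almost disjoint supports. Concretely:
\begin{itemize}
\item For $\|s\|$, the sup over $\rho_{\varphi,\omega}(x)<\infty$ equals the sup over the Luxemburg unit ball, because multiplying by $1-\varepsilon$ does not change $\theta$-type behaviour.
\item For the Orlicz norm, we use the Amemiya formula $\|x\|^{o}=\inf_{k}\frac{1}{k}(1+\rho_{\varphi,\omega}(kx))$. With it, $s(x)\le\theta(x)\|s\|$ and $\theta(x)\le\|x\|^{o}$, which gives the same sup.
\item The disjointification is done with rearrangement-invariant cut-offs: truncate $x$ to sets $\{x^{*}>n\}$ via the measure-preserving transformation $\sigma$.
\end{itemize}

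Given the formulas, both directions are short.
\begin{itemize}
\item If $f\in F$, then $g=0$, and the two formulas give $\|f\|=\|s\|$ and $\|f\|^{o}=\|s\|$, so the norms agree.
\item Conversely, suppose $g\neq0$. Set $a=\|s\|$ and write $\|f\|^{o}=\lambda_{0}$. Then $P_{\psi,\omega}(g/\lambda_{0})+a/\lambda_{0}\le1$, so
\[
\|g\|^{o}_{\mathcal{M}_{\psi,\omega}}\le\frac{1}{k}\bigl(1+P_{\psi,\omega}(kg)\bigr)\Big|_{k=1/\lambda_{0}}=\lambda_{0}\bigl(1+P_{\psi,\omega}(g/\lambda_{0})\bigr)\le \lambda_{0}(2-a/\lambda_{0})=2\lambda_{0}-a.
\]
This yields $\|f\|=\|g\|^{o}+a\le 2\lambda_{0}$, which is not yet sharp.
\end{itemize}
The sharper route is to show $\|f\|^{o}<\|f\|$ directly. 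Pick $k$ with $P_{\psi,\omega}(kg)>0$, i.e. $g\neq 0$. The Amemiya infimum and the Luxemburg infimum for $(g,a)$ then differ strictly, because $\frac{1}{k}(1+u)>\inf\{\lambda: u'\le1\}$ whenever $u=P(kg)>0$. This is the modular analogue of the strict Young inequality $\|\cdot\|_{L}<\|\cdot\|^{o}$ off the singular part: $\lambda\le \frac{1}{k}(1+P(kg))-$ (a positive amount) when $P(kg)+ka\ge$ something positive. I will do this carefully using convexity of $P_{\psi,\omega}$ in $\lambda$.

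I expect the main obstacle to be establishing the two norm formulas, specifically the additivity $\|L_{g}+s\|=\|L_{g}\|+\|s\|$ (and its Luxemburg analogue) in the Orlicz--Lorentz setting. Here $\varphi$ is not an N-function, so $\theta$ and singular functionals need care, and the Köthe dual involves the nonstandard modular $P_{\psi,\omega}$, with the level-function machinery $\omega^{f^{*}}$ needed to realise near-extremal $x$ for $L_{g}$. My first attempt will be to reduce to the Orlicz-space proof of \cite{Chen1996}, transporting functions through the decreasing rearrangement and the measure-preserving transformation $\sigma$.
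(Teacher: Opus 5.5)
The paper gives no argument of its own here. It only quotes Chen's Corollary~1.49, an Orlicz-space result proved in the $N$-function setting.

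Your ``if'' direction is the standard one, with two remarks. First, you have swapped the paper's labels (there $\|\cdot\|$ is dual to the Orlicz norm), which is harmless. Second, the non-trivial half of $\|s\|=\|s\|^{o}$ is that the norm dual to the Luxemburg norm is at most the norm dual to the Orlicz norm. It comes from $s(x)=s(x-e)$, $e\in E_{\varphi,\omega}$, together with $d^{o}(x)=\theta(x)\le\|x\|_{\varphi,\omega}$. The inequality $\theta(x)\le\|x\|^{o}_{\varphi,\omega}$ that you cite only gives the trivial half.

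The genuine gap is the ``only if'' direction, which you never prove. A strict inequality for each fixed $k$ does not give a strict inequality between infima. Worse, the termwise inequality can itself become an equality when $P_{\psi,\omega}(kg)=0$, which happens for $g\neq0$ once $\psi$ vanishes on some $[0,a]$. In fact, in the generality of this paper the ``only if'' direction is false. Take $\varphi(t)=t+t^{2}$, which is in $\Delta_{2}$ but is not an $N$-function, so $\psi(u)=\frac14\bigl((u-1)_{+}\bigr)^{2}$. Let $f=L_{\omega}$, i.e.\ $f(x)=\int_{0}^{\infty}x(t)\omega(t)\,dt$.
\begin{itemize}
\item Upper bound: since $\varphi(t)\ge t$, $|f(x)|\le\int_{0}^{\infty}x^{*}\omega\,dt\le\|x\|_{\varphi,\omega}\le\|x\|^{o}_{\varphi,\omega}$.
\item Lower bound: for $x_{\varepsilon}=\varepsilon\chi_{[0,T]}$ with $\varepsilon W(T)=1$, one has $f(x_{\varepsilon})=1$. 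The Amemiya formula gives $\|x_{\varepsilon}\|^{o}_{\varphi,\omega}=\inf_{k>0}\bigl(\frac1k+1+k\varepsilon\bigr)=1+2\sqrt{\varepsilon}$.
\item Hence both dual norms of $f$ equal $1$, yet $f\notin F$, since $f(\chi_{[0,1]})=W(1)\neq0$ and $\chi_{[0,1]}\in E_{\varphi,\omega}$.
\end{itemize}
In your notation, $P_{\psi,\omega}(k\omega)=0$ for $k\le1$ and $=\infty$ for $k>1$, so the Luxemburg and Orlicz norms of $\omega$ in $\mathcal M_{\psi,\omega}$ coincide. The same happens whenever $\lim_{t\to0^{+}}\varphi(t)/t>0$. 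So no proof can work without the extra hypothesis $\lim_{t\to0^{+}}\varphi(t)/t=0$, equivalently $\psi(u)>0$ for $u>0$, which the $N$-function setting supplies.

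Under that hypothesis the direction is true, but you still need a \emph{uniform} strictness argument, and the danger is $k\to\infty$. When $\lim_{u\to\infty}\psi(u)/u=B<\infty$, the infimum $\inf_{k}\frac1k\bigl(1+P_{\psi,\omega}(kg)\bigr)$ need not be attained; this is the $k^{**}_{\mathcal M}=\infty$ case of part~(4) of Theorem~\ref{Norminkothedual}, with $\psi$ in place of $\varphi$. For example, take $\omega\equiv1$, $\psi(u)=u^{2}/2$ on $[0,1]$, $\psi(u)=u-\frac12$ for $u>1$, and $g=\chi_{[0,1]}$. The quantity is $1+\frac1{2k}$ for $k\ge1$, so the Orlicz norm $1$ is not attained, while the Luxemburg norm is $\frac23$. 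A ``positive amount'' of size about $1/k$ is useless there.

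You need neither the exact formula for the norm dual to the Orlicz norm nor a new proof of additivity. The paper lists $\|f\|^{o}=\|g\|^{o}_{\mathcal M_{\psi,\omega}}+\|s\|^{o}$, and the triangle inequality gives $\|f\|\le\|g\|_{\mathcal M_{\psi,\omega}}+\|s\|$. Given $\|s\|=\|s\|^{o}$, everything reduces to $\|g\|_{\mathcal M_{\psi,\omega}}<\|g\|^{o}_{\mathcal M_{\psi,\omega}}$ for $g\neq0$. To prove this:
\begin{itemize}
\item Put $h=g/\|g\|_{\mathcal M_{\psi,\omega}}$ and $G(c)=P_{\psi,\omega}(ch)$. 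Then $G$ is convex, $G(0)=0$ and $G(c)>1$ for $c>1$, so $G(c)/c$ is nondecreasing and $\ge1$ on $(1,\infty)$.
\item On $(0,1]$ one has $\frac{1+G(c)}{c}\ge\min\{2,1+G(\frac12)\}>1$, since $\psi>0$.
\item Hence $\inf_{c>0}\frac{1+G(c)}{c}=1$ would force $G(c)=c$ for all $c>1$.
\item By the first lemma of Subsection~\ref{Normexpression}, using $\omega^{(ch)^{*}}=\omega^{h^{*}}$, write $G(c)/c=\int_{0}^{\infty}\frac{\psi(c\eta(t))}{c}\,\omega^{h^{*}}(t)\,dt$ with $\eta=h^{*}/\omega^{h^{*}}$. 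The integrand is nondecreasing in $c$.
\item Comparing two values $1<c_{1}<c_{2}$ shows that $\psi(u)/u$ is constant and positive on some $[c_{1}s,c_{2}s]$ with $s>0$. Convexity and $\psi(0)=0$ then force $\psi(u)=Bu$ on $[0,c_{2}s]$ with $B>0$.
\item This contradicts $\psi(u)/u\to0$ as $u\to0^{+}$, which holds because $\varphi(t)>0$ for $t>0$.
\end{itemize}
This step is what your proposal is missing.
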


\begin{lemma}\cite[Lemma 2.22]{Wang2024}
	For any $f\in (\Lambda_{\varphi, \omega})^{*}$, $f=L_{v}+s$, we have $\|f\|=\|v\|_{\mathcal{M}_{\psi, \omega}}+\|s\|$, $\|f\|^{o}= \|v\|_{\mathcal{M}_{\psi, \omega}^{o}}+\|s\|^{o}$\\
\end{lemma}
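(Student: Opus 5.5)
The plan is to prove the two inequalities separately for each of the two norms. I will write the argument for $\|f\|$, the norm dual to the Orlicz norm $\|\cdot\|^{o}_{\varphi,\omega}$; the case of $\|f\|^{o}$, dual to the Luxemburg norm, is handled the same way.

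\textbf{The inequality $\leq$.} This is the triangle inequality $\|L_v+s\|\le\|L_v\|+\|s\|$. The only input needed is that $\|L_v\|$ equals the Köthe dual norm of $v$. For the Orlicz norm this is $\|v\|_{\mathcal{M}_{\psi,\omega}}$, and for the Luxemburg norm it is $\|v\|^{o}_{\mathcal{M}_{\psi,\omega}}$; both come from the first lemma of Section \ref{Auxiliary}, part (1). For the functional $L_v$ on the whole space, and not merely on $E_{\varphi,\omega}$, I will use Lemma \ref{0818} (the second Köthe dual is $\Lambda_{\varphi,\omega}$). That lemma shows the supremum defining the norm of $v$ can be computed over all of $\Lambda_{\varphi,\omega}$.

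\textbf{The inequality $\geq$.} This is the substantive part. Fix $\varepsilon>0$.

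First, choose $x\ge0$ with $\|x\|^{o}_{\varphi,\omega}\le1$ and $L_v(x)\ge\|v\|_{\mathcal{M}_{\psi,\omega}}-\varepsilon$. By monotone approximation (Fatou property), I may assume $x$ is bounded and supported on a set of finite measure, so $x\in E_{\varphi,\omega}$. By the Amemiya formula (1.1), there is $k>0$ with $\frac1k(1+\rho_{\varphi,\omega}(kx))\le 1+\varepsilon$.

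Next, handle the singular part. Since $s$ vanishes on $E_{\varphi,\omega}$, and $\|s\|$ can be computed over elements with $\theta(y)$ as small as we like relative to $\|y\|$ (this is the mechanism behind the lemma on $\theta$ and singular functionals), choose $y\ge0$ with $\rho_{\varphi,\omega}(ky)<\infty$ and $s(y)\ge(\|s\|-\varepsilon)/k$, up to normalization. Then replace $y$ by its tail $y_n=y\chi_{\{y>n\}\cup(n,\infty)}$. The difference $y-y_n$ is bounded with support of finite measure, hence lies in $E_{\varphi,\omega}$, so $s(y_n)=s(y)$. Also $\rho_{\varphi,\omega}(ky_n)\to0$ by dominated convergence, and $|L_v(y_n)|\to0$ because $L_v$ is order continuous on $\Lambda_{\varphi,\omega}$.

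Now shrink the support of $x$ so that $x$ and $y_n$ are disjoint. This costs at most $\varepsilon$ in $L_v(x)$, because $x$ is bounded with finite-measure support and the sets $\{y>n\}\cup(n,\infty)$ shrink in the relevant sense. Then set $z=x+y_n$.

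\textbf{The key modular estimate.} For disjointly supported $u,w$ we have $\varphi(|u+w|)=\varphi(|u|)+\varphi(|w|)$. Moreover $\rho_{\varphi,\omega}(h)=\|\varphi(|h|)\|_{\Lambda_\omega}$, which is subadditive. Hence
$$
\rho_{\varphi,\omega}(kz)\le\rho_{\varphi,\omega}(kx)+\rho_{\varphi,\omega}(ky_n).
$$
Amemiya then gives $\|z\|^{o}_{\varphi,\omega}\le 1+2\varepsilon$ for large $n$. Since $f(z)=L_v(x)+L_v(y_n)+s(x)+s(y_n)$ and $s(x)=0$, we get $f(z)\ge\|v\|+\|s\|-O(\varepsilon)$ after normalizing. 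For the Luxemburg case, I will instead take $\rho_{\varphi,\omega}(x)\le1-\delta$ (possible after scaling $x$ slightly) and pick $y$ with $\rho_{\varphi,\omega}(y)<\infty$ and $\|y\|$ close to $1$. Letting $\rho_{\varphi,\omega}(y_n)<\delta$ then gives $\rho_{\varphi,\omega}(z)\le1$.

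\textbf{Expected main obstacle.} The hardest step is making the singular part compatible with the fixed Amemiya constant $k$ coming from $x$. Concretely, this requires showing that $\|s\|$ is attained, up to $\varepsilon$, on elements $y$ with $\rho_{\varphi,\omega}(ky)<\infty$ and with the correct scaling. I will try to use the identity $\|s\|=\sup\{s(y):\rho_{\varphi,\omega}(y)<\infty\}$, valid for the norm dual to the Luxemburg norm, together with the relation $\|s\|=\|s\|^{o}$ for singular functionals (the lemma stating $\|f\|=\|f\|^{o}$ iff $f\in F$). The point is that a singular functional sees only $\theta(y)$, so both dual norms reduce to the same supremum over the modular-finite unit ball.
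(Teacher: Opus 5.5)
The paper gives no proof of this lemma; it is quoted from \cite{Wang2024}, so your proposal has to stand on its own. Your inequality $\le$ is fine. So is your treatment of the second identity, for the norm $\|f\|^{o}$ dual to the Luxemburg norm. There the unit ball is $\{\rho_{\varphi,\omega}\le 1\}$ and $\|v\|^{o}_{\mathcal{M}_{\psi,\omega}}=\sup\{L_v(x):\rho_{\varphi,\omega}(x)\le 1\}$. Your disjoint-tail construction then gives $\rho_{\varphi,\omega}(z)\le 1$ and $f(z)\ge\|v\|^{o}_{\mathcal{M}_{\psi,\omega}}+\|s\|^{o}-O(\varepsilon)$, provided $y$ is chosen with $\|y\|_{\varphi,\omega}\le 1$ and $s(y)\ge\|s\|^{o}-\varepsilon$, rather than with ``$\|y\|$ close to $1$''.

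The gap is in the first identity, at the words ``after normalizing''. Your $z$ only yields $f(z)\ge\|v\|_{\mathcal{M}_{\psi,\omega}}+\|s\|/k-O(\varepsilon)$, and the factor $1/k$ cannot be removed. Since $s$ vanishes on $E_{\varphi,\omega}$ and $d^{o}=\theta$, one has $|s(y)|\le\|s\|\,\theta(y)\le\|s\|/k$ whenever $\rho_{\varphi,\omega}(ky)<\infty$. Meanwhile $\frac1k(1+\rho_{\varphi,\omega}(kx))\le 1+\varepsilon$ forces $k\ge(1+\rho_{\varphi,\omega}(kx))/(1+\varepsilon)$, which in general stays away from $1$. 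The identity $\|s\|=\sup\{s(y):\rho_{\varphi,\omega}(y)<\infty\}$ you plan to invoke is exactly this bound, so it cannot rescue the step.

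In fact no argument can close the gap, because the first identity is false in general. Take $\|x\|^{o}_{\varphi,\omega}\le 1$, any $k>0$ with $\rho_{\varphi,\omega}(kx)<\infty$, and any $\lambda>0$. Young's inequality $\int_0^\infty|uw|\,dt\le\rho_{\varphi,\omega}(u)+P_{\psi,\omega}(w)$, together with $s(kx)\le\|s\|$, gives
\[
f(x)=\frac{1}{k}\Big(\int_0^\infty kx\,v\,dt+s(kx)\Big)\le\frac{\lambda}{k}\Big(\rho_{\varphi,\omega}(kx)+P_{\psi,\omega}\big(\tfrac{v}{\lambda}\big)+\tfrac{\|s\|}{\lambda}\Big).
\]
By (1.1) this gives
\[
\|f\|\le\lambda_0:=\inf\Big\{\lambda>0: P_{\psi,\omega}\big(\tfrac{v}{\lambda}\big)+\tfrac{\|s\|}{\lambda}\le 1\Big\},
\]
which is the Ando-type expression behind condition (3) of Theorem~\ref{NormattainOrlicznorm}.

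Put $a=\|v\|_{\mathcal{M}_{\psi,\omega}}$ and $b=\|s\|>0$. Convexity of $t\mapsto P_{\psi,\omega}(tv)$ gives $P_{\psi,\omega}(v/(a+b))+\frac{b}{a+b}\le\frac{a}{a+b}P_{\psi,\omega}(v/a)+\frac{b}{a+b}\le 1$. The first inequality is strict when this map is strictly convex, and then $\lambda_0<a+b$.

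A concrete instance: take $\omega\equiv 1$, so $\Lambda_{\varphi,\omega}=L_\varphi(0,\infty)$ and $P_{\psi,\omega}=I_\psi$, because the level function of a decreasing function with respect to a constant weight is the function itself. Take $\varphi(u)=e^u-1-u\notin\Delta_2$, so $\psi(u)=(1+u)\ln(1+u)-u$, together with $v=\chi_{(0,1)}$ and any $0\neq s\in F$. Then $\|f\|<\|v\|_{\mathcal{M}_{\psi,\omega}}+\|s\|$.

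So additivity holds only for the Luxemburg dual norm, which is your second half. For $\|f\|$ the formula to aim for is $\|f\|=\lambda_0$. Your disjoint-tail construction is the natural route to the matching lower bound, provided $x$ and its Amemiya constant $k$ are chosen jointly instead of fixing $x$ first.
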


\begin{lemma}\cite[Theorem 1.43, Theorem 1.44]{Chen1996}, \cite[Theorem 2.21]{Wang2024}
	For any $f\in \Lambda_{\varphi, \omega}$, we have $d(f)=d^{o}(f)=\theta(f)$, where
	$$
	d(f)=\inf\{\|f-f_{e}\|: \: f_{e}\in E_{\varphi,\omega}\};\;d^{o}(f)=\inf\{\|f-f_{e}\|_{\varphi, \omega}^{o}:\: f_{e}\in E_{\varphi, \omega}\}.
	$$
\end{lemma}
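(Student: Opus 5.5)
The statement is the equality $d(f)=d^{o}(f)=\theta(f)$ for every $f\in\Lambda_{\varphi,\omega}$. Since $\|\cdot\|_{\varphi,\omega}\le\|\cdot\|^{o}_{\varphi,\omega}$, we have $d(f)\le d^{o}(f)$. It therefore suffices to prove two things: $\theta(f)\le d(f)$, and $d^{o}(f)\le\theta(f)$.

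\emph{Lower bound $\theta(f)\le d(f)$.} Fix $f_{e}\in E_{\varphi,\omega}$ and any $\lambda>\|f-f_{e}\|$, so that $\rho_{\varphi,\omega}((f-f_{e})/\lambda)\le 1$. For $0<\varepsilon<1$ write
$$
\frac{(1-\varepsilon)f}{\lambda}=(1-\varepsilon)\frac{f-f_{e}}{\lambda}+\varepsilon\,\frac{(1-\varepsilon)f_{e}}{\varepsilon\lambda}.
$$
We then use the subadditivity $(g+h)^{*}(s+t)\le g^{*}(s)+h^{*}(t)$, which gives $(g+h)^{*}(t)\le g^{*}(t/2)+h^{*}(t/2)$. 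Combined with convexity of $\varphi$ and monotonicity of $\omega$, this yields
$$
\rho_{\varphi,\omega}(u+v)\le C\bigl(\rho_{\varphi,\omega}(2u)+\rho_{\varphi,\omega}(2v)\bigr)
$$
with $C=2$, because $\int\varphi(g^{*}(t/2))\omega(t)\,dt=2\int\varphi(g^{*}(s))\omega(2s)\,ds\le 2\rho(g)$. Replacing the factor $1-\varepsilon$ by $(1-\varepsilon)/2$ absorbs the doubling. Since $f_{e}\in E_{\varphi,\omega}$ has finite modular at every scale, we get $\rho_{\varphi,\omega}(cf/\lambda)<\infty$ for a suitable $c<1$ arbitrarily close to $1/2$.

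This only gives $\theta(f)\le 2\lambda$, which is too weak. To get the sharp constant, I would instead argue directly on level sets. Choose $\delta>0$ and split $f_{e}=f_{e}\chi_{\{|f_{e}|>\delta\}}+f_{e}\chi_{\{|f_{e}|\le\delta\}}$. The first piece is supported on a set of finite measure and lies in $E_{\varphi,\omega}$, so $E_{\varphi,\omega}$ is approximable by bounded functions of finite-measure support. For $h$ bounded with support of finite measure, $\rho_{\varphi,\omega}((f-h)/\lambda)<\infty$ implies $\rho_{\varphi,\omega}(f/\lambda')<\infty$ for every $\lambda'>\lambda$. The reason is that $f$ and $f-h$ differ only on a finite-measure set $A$ where $h$ is bounded. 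There $|f|\le|f-h|+\|h\|_{\infty}$, and $\rho(\lambda'^{-1}\|h\|_\infty\chi_{A})<\infty$ because $W(\mu(A))<\infty$. Convexity then finishes with $\lambda'$ slightly larger.

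\emph{Upper bound $d^{o}(f)\le\theta(f)$.} For $\lambda>\theta(f)$ we have $\rho(f/\lambda)<\infty$. Set $f_{n}=f\chi_{\{1/n\le|f|\le n\}\cap[0,n]}$; these are bounded with finite-measure support, hence lie in $E_{\varphi,\omega}$. Since $(f-f_{n})^{*}\le f^{*}$ pointwise and $(f-f_{n})^{*}\to0$ a.e., dominated convergence gives $\rho((f-f_{n})/\lambda)\to0$. The Amemiya formula (1.1) with $k=1/\lambda$ then gives
$$
\|f-f_{n}\|^{o}\le\lambda\bigl(1+\rho((f-f_{n})/\lambda)\bigr)\to\lambda .
$$

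The main obstacle is the lower bound: rearrangements are not additive, so the perturbation argument must go through approximation by bounded, finite-measure-support elements of $E_{\varphi,\omega}$ rather than through naive convexity. Density of such functions in $E_{\varphi,\omega}$ in the Luxemburg norm follows by the same dominated-convergence argument.
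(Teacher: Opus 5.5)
Your proof is correct in substance, but the lower bound takes a detour that the actual difficulty does not require. Your upper bound $d^{o}(f)\le\theta(f)$ (truncation, dominated convergence, then the Amemiya formula with $k=1/\lambda$) is the classical argument behind the results the paper cites: Chen's Theorems 1.43--1.44 for Orlicz spaces, and Wang's Theorem 2.21 in the Orlicz--Lorentz setting. You should add why $(f-f_n)^*\to 0$ pointwise. The reason is that $\rho_{\varphi,\omega}(f/\lambda)<\infty$ and $W(\infty)=\infty$ force $\mu\{|f|>s\}<\infty$ for every $s>0$, so continuity from above applies to the sets $\{|f-f_n|>s\}$.

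The obstacle you cite for the lower bound is not real. Since $\omega$ is decreasing, $\rho_{\varphi,\omega}$ is a convex modular. Apart from the convex term $\omega(\infty)\int_0^\infty\varphi(|f|)\,dt$, it is an integral against $d(-\omega)(s)$ of the convex functionals $f\mapsto\int_0^s\varphi(f^*(t))\,dt=\sup_{\mu(E)\le s}\int_E\varphi(|f|)\,d\mu$. This convexity is also what makes $\|\cdot\|_{\varphi,\omega}$ a norm at all. So for any $f_e\in E_{\varphi,\omega}$ and $\lambda'>\lambda>\|f-f_e\|$,
\[
\rho_{\varphi,\omega}\Bigl(\frac{f}{\lambda'}\Bigr)\le\frac{\lambda}{\lambda'}\,\rho_{\varphi,\omega}\Bigl(\frac{f-f_e}{\lambda}\Bigr)+\frac{\lambda'-\lambda}{\lambda'}\,\rho_{\varphi,\omega}\Bigl(\frac{f_e}{\lambda'-\lambda}\Bigr)<\infty,
\]
and hence $\theta(f)\le\lambda'$ at once. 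This one-line convexity estimate is the standard lower bound.

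Even without that fact, your first attempt was already sharp. You only need to apply convexity of $\varphi$ with the weights $1-\varepsilon,\varepsilon$ of your own decomposition instead of $\tfrac12,\tfrac12$. Then $(g+h)^*(t)\le g^*(t/2)+h^*(t/2)$ together with $\omega(2s)\le\omega(s)$ costs only a factor $2$ in front of the modular:
\[
\rho_{\varphi,\omega}\Bigl(\frac{(1-\varepsilon)f}{\lambda}\Bigr)\le 2(1-\varepsilon)\,\rho_{\varphi,\omega}\Bigl(\frac{f-f_e}{\lambda}\Bigr)+2\varepsilon\,\rho_{\varphi,\omega}\Bigl(\frac{(1-\varepsilon)f_e}{\varepsilon\lambda}\Bigr)<\infty .
\]
Since $\theta$ asks only for finiteness of the modular, not $\rho_{\varphi,\omega}\le 1$, that factor is harmless. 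Your loss of $2$ came from putting it inside $\varphi$.

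So the reduction to bounded $h$ with finite-measure support buys nothing. Its closing step, \emph{convexity then finishes} applied to $|f-h|+\|h\|_\infty\chi_A$, relies on exactly the convexity of $\rho_{\varphi,\omega}$ (or the same rearrangement estimate) that you declared unavailable. Once that is supplied, your route is valid, provided you also do two things. First, make explicit the triangle inequality $\|f-h\|\le\|f-f_e\|+\|f_e-h\|$ that passes from $f_e$ to $h$. Second, replace the level-$\delta$ splitting of $f_e$, neither of whose pieces need be both bounded and of finite-measure support. Use the truncations $f_e\chi_{\{1/n\le|f_e|\le n\}}$ instead, which converge to $f_e$ in modular at every scale and hence in norm.
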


\begin{lemma}\cite[Chapter 2, Proposition 1.7]{Bennett1953}
	\label{converge}
	For $f\in L_{0}$ and $f_{n}\in L_{0}$,
	we have
	\begin{equation*}
		|f|\leq \liminf_{n\rightarrow\infty} |f_{n}|\;\mu-a.e. \Rightarrow f^{*}\leq \liminf_{n\rightarrow} f_{n}^{*};
	\end{equation*}
	In particular,
	$$
	|f_{n}|\uparrow |f|\: \mu-a.e. \Rightarrow f^{*}\leq \liminf_{n\rightarrow\infty}f_{n}^{*}
	$$
\end{lemma}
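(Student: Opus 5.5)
The plan is to reduce the general statement to the monotone case, and to handle the monotone case through distribution functions. Put
$$
g_{n}:=\inf_{k\geq n}|f_{k}|,\qquad g:=\liminf_{n\rightarrow\infty}|f_{n}|=\lim_{n\rightarrow\infty}g_{n}.
$$
Each $g_{n}$ is measurable, $g_{n}\uparrow g$ pointwise, and $|f|\leq g$ $\mu$-a.e. by hypothesis. I will use two elementary facts about rearrangements, both read off directly from the definitions of $\mu_{f}$ and $f^{*}$ given in the introduction.

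\emph{Monotonicity.} If $|h_{1}|\leq|h_{2}|$ a.e., then $\{|h_{1}|>\lambda\}\subset\{|h_{2}|>\lambda\}$ up to a null set. Hence $\mu_{h_{1}}\leq\mu_{h_{2}}$, and so $h_{1}^{*}\leq h_{2}^{*}$.

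Applying this twice gives $f^{*}\leq g^{*}$, and also $g_{n}^{*}\leq f_{k}^{*}$ for every $k\geq n$, since $g_{n}\leq|f_{k}|$. Therefore
$$
g_{n}^{*}\leq\inf_{k\geq n}f_{k}^{*}.
$$

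\emph{Monotone convergence.} The key step, and the only one needing care, is to show $g_{n}^{*}(t)\uparrow g^{*}(t)$ for every $t>0$. For fixed $\lambda>0$ the sets $\{g_{n}>\lambda\}$ increase to $\{g>\lambda\}$, because $g_{n}\uparrow g$ and the inequality is strict. Continuity of measure from below then gives $\mu_{g_{n}}(\lambda)\uparrow\mu_{g}(\lambda)$.

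Now fix $t>0$. Monotonicity already gives $g_{n}^{*}(t)\leq g^{*}(t)$. If $g^{*}(t)=0$ there is nothing to prove. Otherwise take any $0<\lambda<g^{*}(t)$. By the definition of $g^{*}$ as an infimum, $\mu_{g}(\lambda)>t$. By the convergence of distribution functions, $\mu_{g_{n}}(\lambda)>t$ for all large $n$. Since $\mu_{g_{n}}$ is non-increasing, every $\lambda'\leq\lambda$ also has $\mu_{g_{n}}(\lambda')>t$, so $g_{n}^{*}(t)\geq\lambda$ for those $n$. Letting $\lambda\uparrow g^{*}(t)$ gives $\lim_{n}g_{n}^{*}(t)=g^{*}(t)$. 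This also covers $g^{*}(t)=\infty$.

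\emph{Conclusion.} Combining the pieces,
$$
f^{*}(t)\leq g^{*}(t)=\lim_{n\rightarrow\infty}g_{n}^{*}(t)\leq\lim_{n\rightarrow\infty}\inf_{k\geq n}f_{k}^{*}(t)=\liminf_{n\rightarrow\infty}f_{n}^{*}(t).
$$
The "in particular" statement is the special case $|f_{n}|\uparrow|f|$, where $\liminf_{n}|f_{n}|=|f|$. In that case the argument above in fact yields $f_{n}^{*}\uparrow f^{*}$.

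I expect the main obstacle to be the bookkeeping in the monotone convergence step: using strict inequalities $\{g>\lambda\}$ so that the sets genuinely increase to the limit set, and choosing $\lambda$ strictly below $g^{*}(t)$ so that $\mu_{g}(\lambda)>t$ holds strictly. The remaining steps are direct consequences of monotonicity.
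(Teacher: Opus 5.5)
Your proof is correct. The paper gives no argument of its own here; it only quotes Bennett--Sharpley. The standard proof there runs in the opposite direction to yours. It first proves the Fatou-type inequality for distribution functions, $\mu_f\le\liminf_n\mu_{f_n}$, which holds because $\{|f|>\lambda\}\subset\bigcup_m\bigcap_{n\ge m}\{|f_n|>\lambda\}$ up to a null set. It then transfers this to rearrangements by exactly the mechanism of your monotone step: for $0<\lambda<f^*(t)$ one gets $\mu_f(\lambda)>t$, hence $\mu_{f_n}(\lambda)>t$ and $f_n^*(t)\ge\lambda$ for all large $n$. Finally it obtains the monotone statement $f_n^*\uparrow f^*$ as a corollary of the liminf statement plus monotonicity.

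You instead prove the monotone case first, using only continuity of measure from below, and recover the liminf case through $g_n=\inf_{k\ge n}|f_k|$. So the same Fatou reduction is carried out on functions rather than on level sets. The two routes are equally elementary. Yours gives the sharper conclusion $f_n^*\uparrow f^*$ directly, while theirs yields the distribution-function inequality as a by-product.

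There are two harmless loose ends. First, $g=\liminf_n|f_n|$ may be $+\infty$ on a set of positive measure, so strictly $g\notin L_0$. The definitions of $\mu_g$ and $g^*$, and both of your steps, apply verbatim to extended-valued functions; alternatively, use $\min(g_n,|f|)\uparrow|f|$ a.e.\ instead, which stays in $L_0$ and is still dominated by $|f_k|$ for $k\ge n$. Second, the paper defines $f^*(0)$ as $\lim_{t\to0+}f^*(t)$, so the endpoint $t=0$ needs a line. It follows by letting $t\downarrow0$ in $f^*(t)\le\liminf_n f_n^*(t)\le\liminf_n f_n^*(0)$.
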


\begin{lemma}\cite[Remark 1, Case 4]{WangSEP2023}\label{norm123}
	For Orlicz-Lorentz space $\Lambda_{\varphi, \omega}^{o}$. 
	Assume $\lim_{u\rightarrow\infty}\frac{\varphi(u)}{u}=B<\infty$ and let $x\in \Lambda_{\varphi, \omega}\backslash \{0\}$. 
	If $\psi(B)\int_{0}^{m(supp~x)}\omega(t)dt\leq 1$, then $k^{**}(x)=\infty$ and 
	\begin{equation*}
		\|x\|_{\varphi, \omega}^{o}=B\int_{0}^{\infty}x^{*}(t)\omega(t)dt=B\|x\|_{\Lambda_{\omega}}
	\end{equation*} 
\end{lemma}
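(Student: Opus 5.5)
The plan is to exploit the fact that $p$ is bounded by $B$ and converges to $B$. This gives the claim about $k^{**}(x)$ from a direct modular estimate, and the norm formula from two-sided estimates: the Amemiya formula from above and the defining supremum of the Orlicz norm from below.

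\textbf{Preliminary facts about $p$ and $\varphi$.}

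First, $\varphi(u)/u$ is nondecreasing, because $\varphi$ is convex with $\varphi(0)=0$. Hence $\varphi(u)\le Bu$ for all $u\ge 0$.

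Next, $p$ is nondecreasing and $\varphi(u)=\int_0^u p$. If $p(t_0)>B$ for some $t_0$, then $\varphi(u)/u$ would eventually exceed $B$, which is impossible. So $p\le B$. On the other hand $\varphi(u)/u\le \sup p$, so $\lim_{t\to\infty}p(t)=B$. Consequently, for every $u>0$, $p(hu)\uparrow B$ as $h\to\infty$.

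Finally, $x\neq 0$ gives $m(\operatorname{supp} x)>0$, so the hypothesis forces $\psi(B)<\infty$.

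\textbf{Step 1: $k^{**}(x)=\infty$.}

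Fix $k>0$. Since $p$ is nondecreasing, right-continuous and $p(0)=0$, the function $p(kx^*)$ is nonincreasing, vanishes outside $[0,m(\operatorname{supp}x))$, and is equimeasurable with $p(k|x|)$. Thus $(p(kx))^*=p(kx^*)$. Using $p\le B$ and the monotonicity of $\psi$,
$$\rho_{\psi,\omega}(p(kx))=\int_0^{m(\operatorname{supp}x)}\psi(p(kx^*(t)))\omega(t)\,dt\le \psi(B)\int_0^{m(\operatorname{supp}x)}\omega(t)\,dt\le 1 .$$
This holds for every $k>0$, so $k^{**}(x)=\sup\{k>0:\rho_{\psi,\omega}(p(kx))\le1\}=\infty$.

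\textbf{Step 2: upper bound for the norm.}

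By the Amemiya representation (1.1) and $\varphi(u)\le Bu$, for every $k>0$
$$\|x\|^o_{\varphi,\omega}\le \frac1k+\frac1k\int_0^\infty\varphi(kx^*(t))\omega(t)\,dt\le \frac1k+B\int_0^\infty x^*(t)\omega(t)\,dt .$$
Letting $k\to\infty$ gives $\|x\|^o_{\varphi,\omega}\le B\|x\|_{\Lambda_\omega}$. This bound is trivial if $\|x\|_{\Lambda_\omega}=\infty$.

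\textbf{Step 3: lower bound for the norm.}

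Use the defining supremum of the Orlicz norm with the test functions $g_h:=p(hx^*)$, $h>0$. By Step 1, $g_h$ is decreasing with $g_h^*=g_h$ and $\rho_{\psi,\omega}(g_h)\le1$, so $g_h$ is admissible. Hence
$$\|x\|^o_{\varphi,\omega}\ge\int_0^\infty x^*(t)\,p(hx^*(t))\,\omega(t)\,dt .$$
As $h\to\infty$, the integrand increases to $Bx^*(t)\omega(t)$ on $\{x^*>0\}$ and is $0$ elsewhere. By the monotone convergence theorem, $\|x\|^o_{\varphi,\omega}\ge B\|x\|_{\Lambda_\omega}$. Combined with Step 2, this gives the stated equality.

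\textbf{Main obstacle.}

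The only delicate point is the behaviour of $p$: showing $p\le B$ together with $p(t)\to B$, and checking that $(p(kx))^*=p(kx^*)$ so that $g_h$ is an admissible test function. Both follow from the convexity of $\varphi$ and the monotonicity and right-continuity of $p$. If rearranging $p(kx)$ causes trouble, I would instead work throughout with the decreasing function $p(kx^*)$ itself, since only its rearrangement enters the modular and the norm.
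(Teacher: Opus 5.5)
The paper gives no proof of this lemma; it imports it from the cited source. Your argument is correct in substance, with one false assertion that needs a one-line repair (second paragraph).

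\textbf{Comparison with the paper.} The closest argument in the paper is the proof of part (4) of Theorem~\ref{Norminkothedual}, the dual-side analogue, which uses only the supremum definition of the norm. Transplanted here it runs as follows.
\begin{itemize}
\item Since $\varphi(s)\le Bs$, one has $\psi=\infty$ on $(B,\infty)$. Hence every $g$ with $\rho_{\psi,\omega}(g)\le 1$ satisfies $g^*\le B$ a.e., and the upper bound is immediate.
\item The single test function $B\chi_{[0,m(\mathrm{supp}\,x))}$ has modular exactly $\psi(B)\int_0^{m(\mathrm{supp}\,x)}\omega\le 1$. It gives the lower bound with no limiting procedure.
\end{itemize}
Your route is a limiting version of this. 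For the upper bound you use the Amemiya formula with $k\to\infty$. Only its easy half, $\|x\|^o_{\varphi,\omega}\le\frac1k\bigl(1+\rho_{\varphi,\omega}(kx)\bigr)$, is needed, and that follows from Young's inequality. For the lower bound you use $p(hx^*)$ and monotone convergence. This costs an extra limit, but it shows that $k^{**}(x)=\infty$ goes with the Amemiya infimum being approached only as $k\to\infty$.

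\textbf{The false step.} You assert $p(0)=0$ and conclude that $p(kx^*)$ vanishes outside $[0,m(\mathrm{supp}\,x))$. In fact $p(0)=\lim_{t\to0+}\varphi(t)/t$ can be positive once $\varphi$ is not an $N$-function, and that is exactly the generality the paper targets. Your displayed formula in Step 1 still holds, but for a different reason:
\begin{itemize}
\item Convexity gives $\varphi(s)\ge p(0)s$, so $\psi\equiv 0$ on $[0,p(0)]$.
\item $(p(k|x|))^*\le B$ everywhere, because $p\le B$.
\item $(p(k|x|))^*\le p(0)$ on $[m(\mathrm{supp}\,x),\infty)$, because $\{p(k|x|)>p(0)\}\subset\mathrm{supp}\,x$.
\end{itemize}
Hence $\psi\bigl((p(kx))^*\bigr)\le\psi(B)\chi_{[0,m(\mathrm{supp}\,x))}$, which yields $\rho_{\psi,\omega}(p(kx))\le 1$ for every $k$. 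This needs no vanishing of $p(kx)$ itself and no exact identification of its rearrangement. The same observation gives $\rho_{\psi,\omega}(g_h)\le 1$ in Step 3. The monotone-convergence computation there is unaffected, since the integrand carries the factor $x^*$, which vanishes where $x^*=0$.
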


\section{Main results}

\subsection{
	Norm expression of $\|\cdot\|_{\mathcal{M}_{\varphi, \omega}}$
}
\label{Normexpression}
\begin{lemma}\cite{2019abstractlorentz}
	Let $\varphi$ be an Orlicz function and $f\in L_{0}$ such that $P_{\varphi, \omega}(f)<\infty$.
	Then the inverse function $\omega^{f^{*}}$ satisfies $\omega^{f^{*}}\prec \omega$ and
	\begin{align*}
		&P_{\varphi, \omega}(f)=\int_{0}^{\infty}\varphi(\frac{(f^{*})^{0}}{\omega})\omega(t)dt
		=\int_{0}^{\infty}\varphi(\frac{f^{*}}{\omega^{f^{*}}})\omega^{f^{*}}(t)dt,\\
		&\rho_{\psi, \omega}\left(p\left(\frac{(f^{*})^{0}}{\omega}\right)\right)
		=\int_{0}^{\infty}\psi\left(p\left(\frac{f^{*}}{\omega^{f^{*}}}\right)\right)
		\omega^{f^{*}}(t)dt.
	\end{align*}
\end{lemma}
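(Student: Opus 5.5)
The plan is to reduce the infimum in the definition of $P_{\varphi,\omega}(f)$ to a minimization over decreasing weights, and then use the Halperin level function to identify the minimizer. First I would show that the infimum may be taken over nonnegative decreasing $g$ with $g\prec\omega$. Given any admissible $g$, replace $|g|$ by its rearrangement $g^{*}$; then $g^{*}\prec\omega$ still holds, since submajorization depends only on rearrangements. Because $f^{*}$ is decreasing, the function $s\mapsto\varphi(f^{*}(t)/s)s$ is convex and decreasing in $s$, and a Hardy--Littlewood type rearrangement inequality pairing the decreasing $f^{*}$ with the decreasing $g^{*}$ shows that $\int\varphi(f^{*}/g^{*})g^{*}\le\int\varphi(f^{*}/|g|)|g|$. (I read the integrand in the definition of $P_{\varphi,\omega}$ as carrying the weight factor $|g|$, as the conclusion suggests.)

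Next comes the main inequality. For decreasing $g\prec\omega$, I would compare with $h:=\omega^{f^{*}}$. Off the maximal level intervals $h=\omega$ and $(f^{*})^{0}=f^{*}$. On each maximal level interval $(a_n,b_n)$ we have $h=f^{*}/R_n$, so $f^{*}/h\equiv R_n=(f^{*})^{0}/\omega$ is constant there. Hence
\[
\int\varphi\Big(\frac{f^{*}}{h}\Big)h=\int\varphi\Big(\frac{(f^{*})^{0}}{\omega}\Big)\omega ,
\]
using $\int_{a_n}^{b_n}h=F(a_n,b_n)/R_n=W(a_n,b_n)$. This gives the equality of the two expressions in the first display. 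The same computation with $\psi(p(\cdot))$ in place of $\varphi(\cdot)$ gives the second display immediately.

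It remains to prove $h\prec\omega$ and the minimality $\int\varphi(f^{*}/h)h\le\int\varphi(f^{*}/g)g$ for every decreasing $g\prec\omega$. For $h\prec\omega$: at the endpoints of level intervals we have $\int_0^{b_n}h=\int_0^{b_n}\omega$, and inside an interval the level-interval condition $R(a_n,s)\le R(a_n,b_n)$ gives exactly $\int_{a_n}^{s}f^{*}/R_n\le\int_{a_n}^{s}\omega$. One also needs $h$ to be decreasing, or else works with $h^{*}$, noting that $\int_0^t h^{*}$ is then controlled via Halperin's theory; this is a known property of the inverse level function. For minimality, I would use the convexity (perspective) inequality
\[
\varphi\Big(\frac{a}{s}\Big)s\ge\varphi\Big(\frac{a}{s_0}\Big)s_0+\Big[\varphi\Big(\frac{a}{s_0}\Big)-\frac{a}{s_0}\,p\Big(\frac{a}{s_0}\Big)\Big](s-s_0),
\]
noting that the bracket equals $-\psi(p(a/s_0))\le0$. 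Applied pointwise with $a=f^{*}$, $s=g$ and $s_0=h$, this yields
\[
\int\varphi\Big(\frac{f^{*}}{g}\Big)g\ge\int\varphi\Big(\frac{f^{*}}{h}\Big)h-\int\psi\Big(p\Big(\frac{f^{*}}{h}\Big)\Big)(g-h).
\]
It then suffices to show $\int u\,(g-h)\le 0$, where $u=\psi(p(f^{*}/h))$ is decreasing because $f^{*}/h=(f^{*})^{0}/\omega$ is decreasing, a standard property of the level function. By Abel summation, i.e. layer-cake for decreasing $u$, we get $\int u\,(g-h)=\int\big(\int_0^{t}(g-h)\big)\,d(-u)(t)$ plus a boundary term. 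Now $\int_0^t g\le\int_0^t\omega$, and $-du$ is supported where $u$ varies, i.e. off the interiors of level intervals, where $\int_0^t h=\int_0^t\omega$. So the integrand is $\le0$.

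The main obstacle is this last step, namely justifying that $f^{*}/h$ is decreasing and that $d u$ charges only points where $\int_0^t h=\int_0^t\omega$. I would handle it through Halperin's construction, first for bounded $f^{*}$ with support of finite measure, and then pass to the limit using Lemma~\ref{converge} and monotone convergence. The finiteness assumption $P_{\varphi,\omega}(f)<\infty$ ensures that the integrals are finite so that the subtractions are legitimate.
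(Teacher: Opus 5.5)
The paper gives no proof of this lemma; it is simply quoted from Kami\'nska--Raynaud. Your argument therefore has to stand on its own. Most of it is sound:
\begin{itemize}
\item The pointwise identity $f^{*}/\omega^{f^{*}}=(f^{*})^{0}/\omega$ gives the equality of the two integral expressions in each display.
\item $\omega^{f^{*}}$ is decreasing and $\int_0^t\omega^{f^{*}}\le W(t)$, with equality outside the interiors of the maximal level intervals. This gives $\omega^{f^{*}}\prec\omega$ and the upper bound for $P_{\varphi,\omega}(f)$.
\item Your supporting-line inequality is just Young's inequality, $s\varphi(a/s)\ge a\,p(a/s_0)-s\,\psi(p(a/s_0))$, so it needs no differentiability of $\varphi$.
\end{itemize}
The genuine gap is the integrability in the last step.

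Your claim that $P_{\varphi,\omega}(f)<\infty$ makes ``the integrals'' finite is false for the integrals you split off. It fails exactly in the non-$\Delta_2$ setting this paper is about. Take $\omega\equiv1$, $\varphi(x)=e^{x}-1$, and a strictly decreasing $f^{*}$ equal to $\log\frac{1}{t\log^{2}(1/t)}$ near $0$. Then there are no level intervals and $\omega^{f^{*}}=1$. We have $P_{\varphi,\omega}(f)\le\int\varphi(f^{*})<\infty$, but $\int u\,\omega^{f^{*}}=\int(f^{*}e^{f^{*}}-e^{f^{*}}+1)=\infty$. So until you know that $u(g-\omega^{f^{*}})$ is absolutely integrable, neither the subtraction nor your Abel summation (with its boundary terms) is justified. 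Your proposed remedy does not supply this. Lemma~\ref{converge} is about rearrangements. Passing from truncations $f_N$ to $f$ would need $\int\varphi((f^{*})^{0}/\omega)\omega\le\liminf_N\int\varphi((f_N^{*})^{0}/\omega)\omega$, a monotone-convergence property of the Halperin level function that you neither prove nor cite.

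The hole can be closed inside your own framework.
\begin{enumerate}
\item Fix $g\prec\omega$ with $\int\varphi(f^{*}/g)g<\infty$ and put $h=\omega^{f^{*}}$.
\item On $\{g<h\}$ your supporting-line inequality gives $u(h-g)\le\varphi(f^{*}/g)g$. Hence $\int u(g-h)_{-}<\infty$.
\item Each set $\{u>\lambda\}$ is an initial interval whose endpoint $t_\lambda$ lies outside the interiors of the level intervals, because $u$ is constant on them. So $\int_0^{t_\lambda}g\le\int_0^{t_\lambda}g^{*}\le W(t_\lambda)=\int_0^{t_\lambda}h$, i.e.\ $\int_{\{u>\lambda\}}(g-h)_{+}\le\int_{\{u>\lambda\}}(g-h)_{-}$. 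If $t_\lambda=\infty$, pass to the limit along such points. This use of $\int_E g\le\int_0^{|E|}g^{*}$ also makes your first rearrangement step unnecessary.
\item Integrating in $\lambda$ (Tonelli) gives $\int u(g-h)_{+}\le\int u(g-h)_{-}<\infty$. So $u(g-h)$ is integrable and $\int u(g-h)\le 0$.
\item Integrating $\varphi(f^{*}/h)h\le\varphi(f^{*}/g)g+u(g-h)$ now gives the minimality, and also shows $\int\varphi(f^{*}/h)h<\infty$.
\end{enumerate}
Alternatively, test Young's inequality against the truncated multiplier $c=\min\{p(r),m\}\chi_{(0,T)}$, where $r=(f^{*})^{0}/\omega$ and $T$ lies outside the interiors of the level intervals. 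This $c$ is bounded, decreasing and constant on each level interval, so every integral is finite. Hardy's lemma gives $\int\psi(c)g\le\int\psi(c)\omega$, and $\int f^{*}c=\int(f^{*})^{0}c$. Then let $m,T\to\infty$ by monotone convergence.
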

We now investigate the Orlicz norm in $\mathcal{M}_{\varphi, \omega}^{o}$ generated by arbitrary Orlicz function.
\begin{theorem}\label{Norminkothedual}
	Assume $\varphi$ is an Orlicz function and $\omega$ is a decreasing weight. Let $v\in\mathcal{M}_{\varphi, \omega}^{o}$. \\
	(1)If Orlicz function $\varphi$ is an $N$-function, then $k^{**}_{\mathcal{M}}<\infty$\\
	(2)If $\varphi$ satisfies $\lim_{u\rightarrow \infty}$ $\frac{\varphi(t)}{t}=B<\infty$ and $\lim_{t\rightarrow\infty}Bt-\varphi(t)=\infty$, then $k^{**}_{\mathcal{M}}(v)<\infty$.\\
	(3)If $\varphi$ satisfies $\lim_{t\rightarrow\infty}\frac{\varphi(t)}{t}=B<\infty$ and $\lim_{t\rightarrow \infty}Bt-\varphi(t)<\infty$, $k\in\mathcal{M}_{\varphi, \omega}^{o}$ satisfies $\psi(B)\int_{0}^{\mu(supp~v)}\omega(t)dt> 1$, then $k_{\mathcal{M}}^{**}(v)<\infty$.\\
	(4)If $\varphi$ satisfies $\lim_{t\rightarrow\infty}\frac{\varphi(t)}{t}=B<\infty$ and $\lim_{t\rightarrow\infty}Bt-\varphi(t)<\infty$. $v\in\mathcal{M}_{\varphi, \omega}^{o}$ and $\psi(B)\int_{0}^{\mu(supp~v)}\omega(t)dt\leq 1$, then $k^{**}_{\mathcal{M}}(v)=\infty$.\\
	In case (1)(2)(3),
	$\|v\|_{\mathcal{M}_{\varphi, \omega}^{o}}=\frac{1}{k}(1+P_{\varphi, \omega}(kx))$ where $v\in K_{\mathcal{M}}(v)$.
	In case (4), 
	$\|v\|_{\mathcal{M}_{\varphi, \omega}^{o}}=B\int_{0}^{\mu(supp~v)}v^{*}(t)dt
	=B\int_{0}^{\infty}v^{*}(t)dt$.
\end{theorem}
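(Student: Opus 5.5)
The plan is to reduce everything to the behaviour of the scalar function
$$
h(k)=\int_{0}^{\infty}\psi\Big(p\Big(\frac{kv^{*}(t)}{\omega^{v^{*}}(t)}\Big)\Big)\omega^{v^{*}}(t)\,dt ,
$$
which is non-decreasing in $k$ because $p$ and $\psi$ are non-decreasing. Then $k^{**}_{\mathcal{M}}(v)<\infty$ holds exactly when $\lim_{k\to\infty}h(k)>1$. We may take $v\neq 0$. Write $u(t)=v^{*}(t)/\omega^{v^{*}}(t)$ and $E=\{u>0\}$. By the lemma from \cite{2019abstractlorentz} preceding the theorem, $\omega^{v^{*}}\prec\omega$. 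Also, $\omega^{v^{*}}$ equals $\omega$ off the maximal level intervals and is a rescaling of $v^{*}$ on them. Hence $\int_{E}\omega^{v^{*}}$ equals $\int_{0}^{\mu(\mathrm{supp}\,v)}\omega(t)\,dt$: the level intervals lie inside the support of $v^{*}$, and $\int_{a_n}^{b_n}\omega^{v^{*}}=W(a_n,b_n)$ by the definition of $R(a_n,b_n)$.

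\textbf{Case (1).} Since $\varphi$ is an $N$-function, $p(s)\to\infty$ as $s\to\infty$, and $\psi$ is finite with $\psi(s)\to\infty$. Fix a set $A\subset E$ of positive $\omega^{v^{*}}$-measure on which $u\geq c>0$. Then $h(k)\geq \psi(p(kc))\int_A\omega^{v^{*}}\to\infty$, so $h(k)>1$ for large $k$.

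\textbf{Cases (2)--(4).} Here $p(s)\uparrow B$. Moreover $Bs-\varphi(s)=\int_0^s(B-p)$, so $\psi(B)=\sup_s(Bs-\varphi(s))=\lim_{s\to\infty}(Bs-\varphi(s))$. By monotone convergence,
$$
\lim_{k\to\infty}h(k)=\psi(B^{-})\int_{E}\omega^{v^{*}}=\psi(B^{-})\int_{0}^{\mu(\mathrm{supp}\,v)}\omega(t)\,dt ,
$$
where $\psi(B^{-})=\lim_{s\uparrow B}\psi(s)$. If $p(s)=B$ is attained at some finite $s$, the limit may be attained, but the computation is the same.

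In case (2), $\psi(B)=\infty$, so the limit is $+\infty$ and $k^{**}_{\mathcal{M}}<\infty$. In case (3), $\psi(B)<\infty$ and $\psi$ is left-continuous at $B$, so the limit equals $\psi(B)\int_0^{\mu(\mathrm{supp}\,v)}\omega>1$. In case (4) we get $h(k)\leq \psi(B)\int_0^{\mu(\mathrm{supp}\,v)}\omega\le 1$ for all $k$, hence $k^{**}_{\mathcal{M}}=\infty$.

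\textbf{Norm formula in cases (1)--(3).} Once $k^{**}_{\mathcal{M}}<\infty$, the set $K_{\mathcal{M}}$ is nonempty. I would follow the proof of Foralewski's theorem for $\Lambda^{o}_{\varphi,\omega}$, transported via the identity
$$
P_{\varphi,\omega}(kv)=\int_0^\infty\varphi(ku)\,\omega^{v^{*}}\,dt .
$$
The upper bound is the definition of $\|\cdot\|_{\mathcal{M}^{o}}$ as an infimum. For the lower bound, take $g=p(k u)$. By Young's equality, $ku\,g=\varphi(ku)+\psi(g)$. For $k\in K_{\mathcal{M}}$ one then shows that $F(k')=\frac1{k'}(1+P(k'v))$ has one-sided derivatives of the right sign at $k$. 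These derivatives equal $\frac{1}{k^2}\big(h(k^{\pm})-1\big)$, obtained by differentiating $\varphi(k'u)$ under the integral, using the right and left derivatives $p$ and $p_-$. So $k$ minimizes the convex function $F$, giving $\|v\|^{o}=F(k)$. This extends the cited \cite[Theorem 2.8]{Wang2024} from $h(k)=1$ to $h(k^-)\le1\le h(k^+)$.

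\textbf{Norm formula in case (4).} From $\varphi(s)\le Bs$ we get
$$
\tfrac1k\big(1+P(kv)\big)\ge B\int u\,\omega^{v^{*}}=B\int_0^\infty v^{*}(t)\,dt .
$$
Conversely, $\varphi(s)\geq Bs-\psi(B)$, so
$$
\tfrac1k\big(1+P(kv)\big)\leq \tfrac1k\Big(1-\psi(B)\int_{0}^{\mu(\mathrm{supp}\,v)}\omega\Big)+B\int v^{*}\ \longrightarrow\ B\int v^{*}\quad (k\to\infty),
$$
where the bracket is nonnegative by the hypothesis of (4). This mirrors Lemma \ref{norm123}.

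The main obstacle is the derivative and convexity argument for the norm formula when $h$ jumps, that is, when $p$ has flat parts or jumps. This requires careful monotone and dominated convergence for the difference quotients of $\varphi(k'u)$. It also requires checking that the infimum defining $P$ is attained at $\omega^{v^{*}}$ uniformly in $k$. That holds because the level intervals of $kv^{*}$ do not depend on $k$.
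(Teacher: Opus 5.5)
\textbf{Gap in case (4).} The norm computation in case (4) is wrong as written: both displayed inequalities point the wrong way.
\begin{itemize}
\item From $\varphi(s)\le Bs$ you get $P(kv)=\int\varphi(ku)\,\omega^{v^{*}}\le kB\int u\,\omega^{v^{*}}=kB\int_0^\infty v^{*}$. That is an \emph{upper} estimate, $\frac1k(1+P(kv))\le\frac1k+B\int v^{*}$, not a lower one.
\item Young's inequality $\varphi(s)\ge Bs-\psi(B)$, used on $E$ only, gives the \emph{lower} estimate $\frac1k(1+P(kv))\ge\frac1k(1-c)+B\int v^{*}$, where $c=\psi(B)\int_0^{\mu(\mathrm{supp}\,v)}\omega(t)\,dt$.
\end{itemize}
Your second display is actually false. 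Its left side is at least $1/k$, while its right side is $(1-c)/k+B\int v^{*}$, so it fails for all small $k$ once $c>0$.

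Your first display, if it held, would give $\|v\|_{\mathcal{M}^{o}_{\varphi,\omega}}\ge B\int v^{*}$ without using hypothesis (4). That contradicts case (3): there $F'_{+}(k)=(h(k)-1)/k^{2}>0$ for all large $k$, so $\inf F<\lim_{k\to\infty}F(k)=B\int v^{*}$. The repair uses exactly your ingredients with the roles exchanged. The hypothesis $c\le1$ is what makes the lower estimate give $F(k)\ge B\int v^{*}$ for \emph{every} $k$. The bound $\varphi(s)\le Bs$, with $k\to\infty$, gives the matching upper bound.

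\textbf{Two smaller corrections.}
\begin{itemize}
\item In the norm formula for (1)--(3), $F(k)=\frac1k(1+P(kv))$ is not convex in $k$ in general. On an interval where $h$ is constant and $>1$, $F'=(h-1)/k^{2}$ is decreasing. What is convex is $s\mapsto F(1/s)=s+s\,P(v/s)$, the perspective of the convex function $k\mapsto P(kv)$. Your conditions $h(k^{-})\le1\le h(k)$ are exactly the first-order optimality conditions for this function at $s=1/k$. Equivalently, the sign of $F'_{\pm}(k)$ is the sign of $h(k^{\pm})-1$, which is monotone in $k$.
\item In case (2) you need $\psi(B^{-})=\psi(B)=\infty$. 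This holds since $\lim_{s\uparrow B}\psi(s)=\sup_t\sup_{s<B}(st-\varphi(t))=\psi(B)$, the same left-continuity you invoke in case (3).
\end{itemize}

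\textbf{Comparison with the paper.} Apart from these points, your route differs from the paper's throughout.
\begin{itemize}
\item \emph{Finiteness of $k^{**}_{\mathcal{M}}$.} The paper cites an earlier result for (1) and Halperin's Theorem 3.6 for (2). For (3) it picks $a<\mu(\mathrm{supp}\,v)$, then $u_v$ with $\psi(p(u_v))\int_0^a\omega>1$, and sets $k'_v=u_v\omega(a)/(v^{*})^{0}(a)$, using that $(v^{*})^{0}/\omega$ decreases. Your single identity $\lim_k h(k)=\psi(B)\int_0^{\mu(\mathrm{supp}\,v)}\omega$, resting on $\int_E\omega^{v^{*}}=W(\mu(\mathrm{supp}\,v))$, handles (2)--(4) at once. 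It also shows that comparing this quantity with $1$ is precisely the (3)/(4) dichotomy.
\item \emph{Norm in case (4).} The paper argues through K\"othe duality. Since $\psi=\infty$ on $(B,\infty)$, every $x$ with $\rho_{\psi,\omega}(x)\le1$ has $|x|\le B$, so $\int xv\le B\int v^{*}$. The test function $x_0=B\chi_{[0,\mu(\mathrm{supp}\,v)]}$ is admissible exactly by the hypothesis. Your corrected argument stays inside the Amemiya formula and needs no duality.
\item \emph{Norm formula in (1)--(3).} The paper gives no argument beyond the cited lemma, which requires $h(k)=1$ exactly. Your derivative argument, phrased in $1/k$, covers the missing case $h(k^{-})\le1\le h(k)$.
\end{itemize}
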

\begin{proof}
	(1)The proof is in \cite[Theorem 2.11]{Wang2024}.\\
		(2)
		The condition $\lim_{t\rightarrow\infty}Bt-\varphi(t)=\infty$ is equivalent to $\psi(B)=\infty$. For all $v\in\mathcal{M}_{\varphi, \omega}^{o}$ and $v\neq 0$, from \cite[Theorem 3.6]{Halperin195305} there exists $k_{v}$ and $t_{v}$ such that
		\begin{equation*}
			\int_{0}^{t_{v}}\psi\left(p\left(\frac{k_{v}(v^{*})^{0}(t_{v})}{\omega(t_{v})}\right)\right)\omega(t_{v})dt\geq1
		\end{equation*}
		thus $k^{*}_{\mathcal{M}}(v)\leq k^{**}_{\mathcal{M}}(v)\leq k_{v}$.\\
		(3)
		Since $\psi(B)\int_{0}^{\mu (supp~v)}\omega(t)dt>1$, 
		there exists $a\in R^{+}$, $0<a<\mu(supp~v)$, such that
		\begin{equation*}
			\psi(B)\int_{0}^{a}\omega(t)dt\geq 1
		\end{equation*}
		Since $\lim_{u\rightarrow\infty}p(u)=B$, there exists $u_{v}>0$ for which $\psi(p(u_{v}))\int_{0}^{a}\omega(t)dt>1$. Define $k^{\prime}_{v}=\frac{u_{v}\omega(a)}{(v^{*})^{o}(a)}$ then
		\begin{equation*}
			\int_{0}^{\infty}\psi(p(\frac{u_{v}\omega(a)}{(v^{*})^{0}(a)}
			\frac{(v^{*})^{0}(t)}{\omega(t)}))\omega(t)dt\geq \int_{0}^{\infty}\psi(p(u_{v}))\omega(t)dt> 1
		\end{equation*}
		Consequently, $k^{*}_{\mathcal{M}}(v)\leq k^{**}_{\mathcal{M}}(v)\leq k_{v}$.\\
		(4)
		For arbitrary $k>0$, Since $p(\frac{kv^{*}(t)}{\omega_{v}^{*}}(t))\leq B$. Then
		\begin{equation*}
			\int_{0}^{\infty}\psi(p(\frac{kv^{*}(t)}{\omega_{v}^{*}(t)}))\omega_{v}^{*}(t)dt\leq \psi(B)\int_{0}^{\mu (supp~v)}\omega(t)dt\leq 1. 
		\end{equation*}
		Thus $K_{\mathcal{M}}^{**}(v)=\infty$. 

		Since $\psi(B)<\infty$, $\psi(u)=\infty$ for every $u>B$.
		Then for any $x\in \mathcal{M}_{\varphi, \omega}$ such that $\rho_{\psi, \omega}(x)\leq 1$, we get that $x(t)\leq B$ a.e. in $R^{+}$ and
		\begin{equation*}
			\int_{0}^{\infty}x(t)v(t)dt\leq \int_{0}^{\infty}x^{*}(t)v^{*}(t)dt\leq B\int_{0}^{\infty}v^{*}(t)dt
		\end{equation*}
		For $x_{0}=B\chi_{[0, \mu(supp~v)]}$ we have $\rho_{\psi, \omega}(x_{0})=\psi(B)\int_{0}^{\mu(supp~v)}\omega(t)dt\leq 1$ and
		\begin{equation*}
			\int_{0}^{\infty}x(t)v(t)dt=\int_{0}^{\infty}x^{*}(t)v^{*}(t)dt=B\int_{0}^{\mu(supp~v)}v^{*}(t)dt
		\end{equation*}
		Thus $\|v\|_{\mathcal{M}_{\varphi, \omega}^{o}}=
		B\int_{0}^{\mu(supp~v)}v^{*}(t)dt$.
	\end{proof}

\subsection{Supporting functionals}\label{Supportingfunctional}
\begin{theorem}
	Let $\psi$ be an Orlicz function satisfying
	$\lim_{u\rightarrow\infty}\frac{\psi(u)}{u}=B<\infty$ and $\lim_{u\rightarrow\infty}Bu-\psi(u)<\infty$.
	If $v\in\mathcal{M}_{\psi, \omega}^{o}$ attain its norm at $x\in \Lambda_{\varphi,\omega}$ and $\psi(B)\int_{0}^{\mu(supp~v)}\omega(t)dt\leq 1$, then $\|x\|_{\varphi, \omega}=\frac{1}{B}x^{*}(0)$.
\end{theorem}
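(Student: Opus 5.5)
The plan is to normalize, trap the pairing $\int_{0}^{\infty}x(t)v(t)\,dt$ between two quantities that are both controlled by $x^{*}(0)$, and then read off the equality case. Interpret ``$v$ attains its norm at $x$'' as
$$
\|x\|_{\varphi,\omega}=1,\qquad \int_{0}^{\infty}x(t)v(t)\,dt=\|v\|_{\mathcal{M}_{\psi,\omega}^{o}},
$$
with $v\neq 0$. The claim then reads $x^{*}(0)=B$. By case (4) of Theorem \ref{Norminkothedual}, applied with the roles of $\varphi$ and $\psi$ interchanged (the hypotheses on $\psi$ and on $\mu(supp~v)$ are exactly those of case (4)), we have
$$
\|v\|_{\mathcal{M}_{\psi,\omega}^{o}}=B\int_{0}^{\infty}v^{*}(t)\,dt .
$$
In particular $0<\int_{0}^{\infty}v^{*}<\infty$.

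\textbf{Step 1: the complementary function.} Identify the complementary function of $\psi$, which is $\varphi$ here. Since $\psi(u)/u\uparrow B$, Young's formula gives
$$
\varphi(s)=\sup_{u>0}\{su-\psi(u)\}=\infty \quad\text{for } s>B,
$$
while $\varphi(B)=\lim_{u\to\infty}(Bu-\psi(u))<\infty$. Consequently, for every $\lambda>0$, the condition $\rho_{\varphi,\omega}(x/\lambda)<\infty$ forces $x^{*}(t)\le \lambda B$ for a.e.\ $t$, because $\omega>0$. Applying this with $\lambda\downarrow\|x\|_{\varphi,\omega}$, we get two facts. First, $x^{*}(0)\le B\|x\|_{\varphi,\omega}=B$, using right continuity of $x^{*}$. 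Second, the inequality $\|x\|_{\varphi,\omega}\ge x^{*}(0)/B$ holds for every $x$.

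\textbf{Step 2: chain of inequalities.} By the Hardy--Littlewood inequality and monotonicity of $x^{*}$,
$$
B\int_{0}^{\infty}v^{*}=\int_{0}^{\infty}xv\le\int_{0}^{\infty}x^{*}v^{*}\le x^{*}(0)\int_{0}^{\infty}v^{*}\le B\int_{0}^{\infty}v^{*}.
$$
Hence every inequality in the chain is an equality. Dividing by $\int_{0}^{\infty}v^{*}\in(0,\infty)$ yields $x^{*}(0)=B$, that is, $\|x\|_{\varphi,\omega}=1=x^{*}(0)/B$. If one does not normalize, the same chain with $\|x\|$ carried along gives $\|x\|_{\varphi,\omega}\,\|v\|=\int xv$, so $x^{*}(0)=B\|x\|_{\varphi,\omega}$ in general.

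\textbf{Main obstacle.} The only delicate point is Step 1: justifying rigorously that $\varphi=\infty$ beyond $B$ and that $x^{*}(0)\le B\|x\|_{\varphi,\omega}$. This requires $\omega>0$ on $(0,\infty)$ and a limiting argument in $\lambda$. One must also make sure that case (4) of Theorem \ref{Norminkothedual} is legitimately applied with the roles of $\varphi$ and $\psi$ swapped, which the paper's notation permits.
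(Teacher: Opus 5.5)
Your argument is correct once the support hypothesis is read as the one case (4) actually needs (see the second paragraph), and it takes a different route from the paper.

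\textbf{Comparison with the paper.} The paper computes $\|x\|_{\varphi,\omega}$ via the bidual identity of Lemma \ref{0818}, as a supremum of $\int_0^\infty v^*x^*$ over the unit sphere of $\mathcal{M}^{o}_{\psi,\omega}$. It replaces that constraint by $B\int_0^\infty v^*=1$ using case (4) of Theorem \ref{Norminkothedual}. It then evaluates the result as a Marcinkiewicz norm with respect to the inverse level function $\omega^{v^*}$, obtaining $x^*(0)/B$. You avoid the bidual and level functions entirely. The inequality $x^*(0)\le B\|x\|_{\varphi,\omega}$ comes directly from $\varphi=\infty$ on $(B,\infty)$, and the reverse inequality comes from the attainment chain with Hardy--Littlewood.

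Your route uses the attainment hypothesis essentially and needs the case (4) formula only for the given $v$. The paper's chain, as written, never invokes attainment and uses $B\int v^*=1$ for every $v$ in the dual sphere, which case (4) supplies only for $v$ of small support. Attainment really is needed: for example, $x=\chi_{[0,a]}$ with $\varphi(B)W(a)>1$ has $\|x\|_{\varphi,\omega}>x^*(0)/B$. Running your chain through $\int|x||v|$ also gives $|x|=B\|x\|_{\varphi,\omega}$ a.e. on $\mathrm{supp}\,v$, which is the structure described in the paper's next theorem.

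\textbf{One point needs correcting.} After interchanging $\varphi$ and $\psi$, case (4) places the support condition on the complementary function of $\psi$. It requires $\varphi(B)\int_0^{\mu(\mathrm{supp}\,v)}\omega(t)\,dt\le 1$, where $\varphi(B)=\lim_{u\to\infty}(Bu-\psi(u))$ is the value you found in Step 1. The statement literally has $\psi(B)$ there, so the hypotheses are not \emph{exactly} those of case (4).

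All you actually need is the lower bound $\|v\|_{\mathcal{M}^{o}_{\psi,\omega}}\ge B\int_0^\infty v^*$; the upper bound already follows from Step 1, Hardy--Littlewood and K\"{o}the duality. That lower bound is witnessed by $x_0=B\,\mathrm{sgn}(v)\chi_{\mathrm{supp}\,v}$, whose modular is $\varphi(B)W(\mu(\mathrm{supp}\,v))$. Proving it this way makes your argument self-contained.

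With $\psi(B)$ the claim is false. Take $\omega\equiv 1$, $\psi(u)=u^2/8$ on $[0,4]$ and $\psi(u)=u-2$ for $u\ge 4$. Then $B=1$ and $\psi(B)=1/8$, while $\varphi(s)=2s^2$ on $[0,1]$ and $\varphi=\infty$ on $(1,\infty)$. The function $v=\chi_{[0,4]}$ satisfies $\psi(B)\mu(\mathrm{supp}\,v)=1/2\le 1$. Yet by Jensen $\|v\|_{\mathcal{M}^{o}_{\psi,\omega}}=\sqrt{2}\neq 4=B\int v^*$. This norm is attained at $x=\frac{1}{2\sqrt{2}}\chi_{[0,4]}$, for which $\|x\|_{\varphi,\omega}=1\neq x^*(0)/B$.

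The paper's proof makes the same interchanged use of case (4), so this is a defect of the statement. With $\varphi(B)$ in the hypothesis, your proof is complete.
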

\begin{proof}
	If $\psi$ satisfies the hypothesis, then $(\Lambda_{\varphi, \omega})^{*}$ 
is isometrically isomorphic to the K\"{o}the dual. 
	Assume $v\in \mathcal{M}_{\psi, \omega}^{o}$ and $\|v\|_{\mathcal{M}_{\psi, \omega}^{o}}=1$, then $B\int_{0}^{\infty}v^{*}(t)dt=1$. 
	From Lemma \ref{0818} and part (4) of Theorem \ref{Norminkothedual}  we have
	\begin{align*}
		\|x\|_{\Lambda_{\varphi, \omega}}
		&=\|x\|_{(Q^{o}_{L_{\varphi}, \omega})^{\prime}}\\
		&=\sup\left\{\int_{0}^{\infty} v    (t)x    (t)dt:~\|v\|_{\mathcal{M}_{\psi, \omega}^{o}}=1\right\}\\
		&=\sup\left\{\int_{0}^{\infty} v^{*}(t)x^{*}(t)dt:~\|v\|_{\mathcal{M}_{\psi, \omega}^{o}}=1\right\}\\
		&=\sup\left\{\int_{0}^{\infty} v^{*}(t)x^{*}(t)dt: ~B\int_{R^{+}}v^{*}(t)dt=1 \right\}\\
		&=B\|\frac{v^{*}}{\omega^{v^{*}}}\|_{\Lambda_{\omega^{v^{*}}}}
		\cdot\frac{1}{B}\|x^{*}(t)\omega^{v^{*}}(t)\|_{M_{W^{v^{*}}}}\\
		&=B\int_{R^{+}}v^{*}(t)dt\cdot  \frac{1}{B}\|x^{*}(t)\omega^{v^{*}}(t)\|_{M_{W^{v^{*}}}}\\
		&=\frac{1}{B}\|x^{*}(t)\omega^{v^{*}}(t)\|_{M_{W^{v^{*}}}}\\
		&=\frac{1}{B}\sup_{\alpha\in R^{+}}\frac{\int_{0}^{\alpha}x^{*}(t)\omega^{v^{*}}(t)dt}{\int_{0}^{\alpha}\omega^{v^{*}}(t)dt}\\
		&=\frac{1}{B}x^{*}(0).
	\end{align*}
\end{proof}
\begin{theorem}
	Let $\omega$ be an arbitrary decreasing weight. Assume $\psi$ satisfies   $\lim_{t\rightarrow\infty}\frac{\psi(t)}{t}=B<\infty$ and $\lim_{t\rightarrow\infty}Bt-\psi(t)<\infty$. 
	For $v\in\mathcal{M}_{\psi, \omega}^{o}$ and $\psi(B)\int_{0}^{\mu(supp~v)}\omega(t)dt\leq 1$, then $v$ attains its norm at $x\in S(\Lambda_{\varphi, \omega})$ if and only if \\
	(1)$\rho_{\varphi, \omega}(x)=1$ and\\
	(2)$x(t)=B\chi_{supp~v}(t)+x_{0}(t)$ where $(supp~x_{0})\cap (supp~v)=\emptyset$ and $|x_{0}(t)|\leq B$.
\end{theorem}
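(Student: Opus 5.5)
The plan is to reduce everything to an elementary Hölder-type inequality between $L^{\infty}$ and $L^{1}$, using the previous theorem and part (4) of Theorem \ref{Norminkothedual}. Throughout, "$v$ attains its norm at $x$" means $\int_{0}^{\infty}x(t)v(t)dt=\|v\|_{\mathcal{M}_{\psi,\omega}^{o}}\|x\|_{\varphi,\omega}$. I normalize $\|v\|_{\mathcal{M}_{\psi,\omega}^{o}}=1$. Under the hypothesis $\psi(B)\int_{0}^{\mu(supp~v)}\omega(t)dt\leq 1$, part (4) of Theorem \ref{Norminkothedual} then gives $B\int_{0}^{\infty}|v(t)|dt=1$. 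I also assume $v\geq 0$; in general, sign considerations force $x=B\,\mathrm{sign}\,v$ on $supp~v$, which is how I read (2). The key observation is that $\psi(t)/t\to B$ forces $\varphi(u)=\infty$ for $u>B$. Hence every $x$ with finite modular satisfies $|x|\leq B$ a.e., and the previous theorem gives $\|x\|_{\varphi,\omega}=x^{*}(0)/B$ whenever $v$ attains its norm at $x$.

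\textbf{Necessity.} Suppose $x\in S(\Lambda_{\varphi,\omega})$ and $\int xv=1$. By the previous theorem, $x^{*}(0)=B$, i.e. $\|x\|_{\infty}=B$. Then
\[
1=\int_{0}^{\infty}x(t)v(t)dt\leq \int_{supp~v}|x(t)||v(t)|dt\leq B\int_{0}^{\infty}|v(t)|dt=1 .
\]
The equality case of this chain forces $|x|=B$ a.e. on $supp~v$, with $x\,v\geq 0$ there. This gives $x=B\chi_{supp~v}+x_{0}$, where $x_{0}=x\chi_{(supp~v)^{c}}$ and $|x_{0}|\leq \|x\|_{\infty}=B$, which is (2). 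For (1), I first get $\rho_{\varphi,\omega}(x)\leq 1$ from $\|x\|_{\varphi,\omega}=1$: $\rho_{\varphi,\omega}(x/\lambda)\leq 1$ for all $\lambda>1$, so Fatou's lemma (Lemma \ref{converge}) together with left continuity of $\varphi$ on $[0,B]$ yields $\rho_{\varphi,\omega}(x)\leq1$.

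\textbf{Sufficiency.} Assume (1) and (2). Then $x^{*}(0)=B$, since $supp~v$ has positive measure. From $\rho_{\varphi,\omega}(x)=1$ I get $\|x\|_{\varphi,\omega}\leq 1$. For every $\lambda<1$, the function $x/\lambda$ exceeds $B$ on a set of positive measure, so $\rho_{\varphi,\omega}(x/\lambda)=\infty$ and $\|x\|_{\varphi,\omega}\geq 1$. Hence $x\in S(\Lambda_{\varphi,\omega})$. Finally, $\int xv=B\int|v|=1=\|v\|\,\|x\|$, so $v$ attains its norm at $x$.

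\textbf{Main obstacle.} The delicate point is the reverse inequality $\rho_{\varphi,\omega}(x)\geq 1$ in the necessity direction. The scaling argument shows only that $\|x\|_{\varphi,\omega}=1$ is forced by $\|x\|_{\infty}=B$, independently of the modular. I would first try to extract $\rho_{\varphi,\omega}(x)=1$ from the equality case in Young's inequality, pairing $x$ with $v$ through the level function $\omega^{v^{*}}$ as in the Amemiya representation. If that fails, I would check whether (1) must be read as "$\rho_{\varphi,\omega}(x)\leq 1$". I would settle this by testing a simple example: $x=B\chi_{supp~v}$ with $\varphi(B)\int_{0}^{\mu(supp~v)}\omega<1$.
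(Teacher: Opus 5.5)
\textbf{Verdict: the one step you could not complete cannot be completed, because the statement is false at that point.}

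Your proofs of (2), of $\rho_{\varphi,\omega}(x)\le 1$, and of sufficiency are correct. For (2) your argument is the paper's: the chain $\int xv\le B\int_{\mathrm{supp}\,v}|v|=\|v\|_{\mathcal{M}^{o}_{\psi,\omega}}$, with equality forcing $x=B\,\mathrm{sign}\,v$ on $\mathrm{supp}\,v$. You do not even need the previous theorem there; the chain only uses $|x|\le B$ a.e., which already follows from $\|x\|_{\varphi,\omega}=1$. In the sufficiency you check $\|x\|_{\varphi,\omega}=1$ by scaling, which the paper never does.

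The missing step is $\rho_{\varphi,\omega}(x)\ge 1$ in the necessity. It is false, and the example you planned to test is a counterexample. Take $\omega\equiv 1$ and $\psi(t)=t^{2}/2$ on $[0,1]$, $\psi(t)=t-\frac12$ for $t>1$. Then $B=1$, $\lim_{t\to\infty}(Bt-\psi(t))=\frac12$, and $\varphi(u)=u^{2}/2$ on $[0,1]$, $\varphi(u)=\infty$ for $u>1$. Let $v=x=\chi_{[0,1]}$. Then:
\begin{itemize}
\item $\psi(B)\int_0^1\omega=\frac12\le 1$;
\item $\|v\|_{\mathcal{M}^{o}_{\psi,\omega}}=\inf_{k>0}\frac1k\bigl(1+\psi(k)\bigr)=1$;
\item $\|x\|_{\varphi,\omega}=1$, since $\rho_{\varphi,\omega}(x/\lambda)=\infty$ for $\lambda<1$;
\item $\int_0^\infty xv=1$.
\end{itemize}
So $v$ attains its norm at $x\in S(\Lambda_{\varphi,\omega})$, yet $\rho_{\varphi,\omega}(x)=\varphi(1)=\frac12$. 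The Young-equality route you sketch therefore cannot succeed, and you should drop it.

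The paper gets (1) by claiming that the hypothesis gives $\psi\in\Delta_2$, ``equivalent to $\varphi\in\Delta_2$'', and then invoking Kami\'nska's lemma ($\rho_{\varphi,\omega}(x)=1\iff\|x\|_{\varphi,\omega}=1$). That step is wrong. The standard duality pairs $\Delta_2$ for $\psi$ with $\nabla_2$ for $\varphi$, not $\Delta_2$. Here $\varphi=\infty$ on $(B,\infty)$, so $\varphi\notin\Delta_2$, and this is exactly the regime where a unit vector can have modular $<1$.

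What your argument actually proves is the corrected characterization. For $x\in S(\Lambda_{\varphi,\omega})$, $v$ attains its norm at $x$ if and only if $x=B\,\mathrm{sign}\,v$ a.e. on $\mathrm{supp}\,v$; the bounds $|x|\le B$ and $\rho_{\varphi,\omega}(x)\le 1$ are automatic. In other words, (1) should read $\rho_{\varphi,\omega}(x)\le 1$.

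A smaller point: with the roles of $\varphi$ and $\psi$ interchanged, part (4) of Theorem \ref{Norminkothedual} needs $\varphi(B)\int_0^{\mu(\mathrm{supp}\,v)}\omega\le 1$, not the $\psi(B)$ version. Your use of $\|v\|_{\mathcal{M}^{o}_{\psi,\omega}}=B\int|v|$, like the paper's, tacitly relies on that reading. In the example both constants equal $\frac12$, so it is a counterexample under either reading.
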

\begin{proof}
	\textsl{Necessity:}
	Without loss of generality, we can assume $v\geq 0$.
	Since $\psi$ satisfies the hypothesis, then $\psi\in\Delta_{2}$, and it is equivalent to $\varphi\in\Delta_{2}$. Therefore $x\in S(\Lambda_{\varphi, \omega})$ is equivalent to $\rho_{\varphi, \omega}(x)=1$.
	Let $v\in \mathcal{M}_{\psi, \omega}$ and $v$ attains its norm at $x\in S(\Lambda_{\varphi, \omega})$, then $x^{*}(0)=B$ and $x(t)\leq B$ when $t\neq 0$. Thus
	\begin{align*}
		\|v\|_{\mathcal{M}_{\psi,\omega}^{o}}
		&=\int_{R^{+}}x(t)v(t)dt\\
		&\leq B\int_{supp~v}v(t)dt\\
		&=B\int_{0}^{\mu(supp~v)}v^{*}(t)dt\\
		&=\|v\|_{\mathcal{M}_{\psi, \omega}^{o}}
	\end{align*}
	Therefore $x\chi_{supp~v}(t)=B\chi_{supp~ v}$.\\
	\textsl{Sufficiency}:
	Assume $x(t)=B\chi_{supp~v}(t)+ x_{0}(t)$, therefore
	\begin{equation*}
		\int_{R^{+}}x(t)v(t)dt=B\int_{supp~v}v(t)dt=B\int_{0}^{\mu(supp~v)}v^{*}(t)dt=\|v\|_{\mathcal{M}_{\psi, \omega}^{o}}.
	\end{equation*}
\end{proof}
\begin{theorem}\label{normattainluxemburg}For arbitrary Orlicz function $\varphi$ and decreasing weight $\omega$, $f\in \Lambda_{\varphi, \omega}^{*}$. Assume 
	$f=L_{v}+s$, where $0\neq v\in \mathcal{M}_{\psi, \omega}^{o}$,  $K_{\mathcal{M}}(v)\neq \emptyset$, $s\in F$. Then $f$ attains its norm at $x\in S(\Lambda_{\varphi, \omega})$ if and only if\\
	(1)$\int_{0}^{\infty} x(t)v(t)dt=\int_{0}^{\infty}x^{*}(t)v^{*}(t)dt$ \\
	(2)$\rho_{\varphi, \omega}(x)=1$.\\
	(3)$s(x)=\|s\|$.\\
	(4)$\int_{0}^{\infty}kv^{*}(t)x^{*}(t)dt=\rho_{\varphi, \omega}(x)+P_{\psi,\omega}(kv)$, $k\in K_{\mathcal{M}}(v)$.
\end{theorem}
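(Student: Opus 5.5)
The plan is to run one chain of inequalities, valid for every $x$ with $\|x\|_{\varphi,\omega}=1$, whose two ends are $f(x)$ and $\|f\|$; then $f(x)=\|f\|$ holds exactly when every link is an equality, and conditions (1)--(4) are these equality cases. I work with the Luxemburg norm on $\Lambda_{\varphi,\omega}$. The dual norm of the regular part is the Orlicz-type norm $\|v\|_{\mathcal{M}^{o}_{\psi,\omega}}$ (Lemma 2.1(1)), and $\|f\|=\|v\|_{\mathcal{M}^{o}_{\psi,\omega}}+\|s\|$ (Lemma 2.9, with the convention matching the Luxemburg norm). Fix $k\in K_{\mathcal{M}}(v)$. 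By Lemma 2.5, or by Theorem \ref{Norminkothedual} in cases (1)--(3),
$$\|v\|_{\mathcal{M}^{o}_{\psi,\omega}}=\frac{1}{k}\bigl(1+P_{\psi,\omega}(kv)\bigr).$$

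For $x\in S(\Lambda_{\varphi,\omega})$ the chain is
\begin{align*}
f(x)&=\int_0^\infty x v\,dt+s(x)\le\int_0^\infty x^{*}v^{*}\,dt+\|s\|\\
&\le\frac1k\bigl(\rho_{\varphi,\omega}(x)+P_{\psi,\omega}(kv)\bigr)+\|s\|\\
&\le\frac1k\bigl(1+P_{\psi,\omega}(kv)\bigr)+\|s\|=\|f\|.
\end{align*}
The links are justified as follows.
\begin{itemize}
\item The first inequality is the Hardy--Littlewood inequality together with $s(x)\le\|s\|\|x\|=\|s\|$.
\item The last inequality uses $\rho_{\varphi,\omega}(x)\le1$, which holds because $\|x\|=1$ and the Luxemburg modular is left-continuous.
\item The middle inequality is a Young-type inequality for the pair $(\Lambda_{\varphi,\omega},\mathcal{M}_{\psi,\omega})$, namely $\int x^{*}g^{*}\le\rho_{\varphi,\omega}(x)+P_{\psi,\omega}(g)$ with $g=kv$.
\end{itemize}
To prove the Young-type inequality I take any $h\prec\omega$ and write $x^{*}g^{*}=(x^{*}h)(g^{*}/h)\le\varphi(x^{*})h+\psi(g^{*}/h)h$, using the pointwise Young inequality. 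Since $\varphi(x^{*})$ is nonincreasing and $h\prec\omega$, the Hardy--Littlewood--Pólya lemma gives $\int\varphi(x^{*})h\le\int\varphi(x^{*})\omega=\rho_{\varphi,\omega}(x)$. Taking the infimum over $h$ then yields the claim. For the equality analysis I would use the specific choice $h=\omega^{v^{*}}$ from Lemma 3.1, where $P_{\psi,\omega}(kv)=\int\psi(kv^{*}/\omega^{v^{*}})\omega^{v^{*}}$.

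Necessity follows directly from the chain. If $f(x)=\|f\|$, every link is an equality. Separating the two nonnegative gaps gives (1) and (3) from the first line. The last link gives $\rho_{\varphi,\omega}(x)=1$, which is (2), and the middle link gives (4).

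Sufficiency runs the chain backwards. Conditions (1), (3), (4), (2) turn each inequality into an equality, so
$$f(x)=\frac1k\bigl(1+P_{\psi,\omega}(kv)\bigr)+\|s\|=\|f\|.$$
In addition, (2) implies $\|x\|\le1$. It also implies $\|x\|=1$: (4) with $\rho_{\varphi,\omega}(x)=1$ makes $x$ norming for $f$, so $x\neq0$ and $\|x\|\ge f(x)/\|f\|=1$. Hence $x\in S(\Lambda_{\varphi,\omega})$.

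The main obstacle is the justification of the middle link, and in particular checking that the Hardy--Littlewood--Pólya comparison goes the right way when $\varphi(x^{*})$ is merely nonincreasing and $h=\omega^{v^{*}}$ is the inverse level function. I would handle this interval by interval. On each maximal level interval $(a_n,b_n)$ of $v^{*}$, the comparison $\int\varphi(x^{*})\omega^{v^{*}}\le\int\varphi(x^{*})\omega$ follows from the level-interval inequality $R(a_n,s)\le R(a_n,b_n)$ via integration by parts. Outside these intervals $\omega^{v^{*}}=\omega$, so nothing needs to be checked there. A second point needs care: the Luxemburg-norm identity $\|f\|=\|v\|^{o}_{\mathcal{M}}+\|s\|$ must be used with the norm labels matching those of Lemma 2.9, whose notation swaps $\|\cdot\|$ and $\|\cdot\|^{o}$.
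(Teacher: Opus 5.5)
Your proposal is correct and is essentially the paper's argument: the paper simply refers to the proof of Theorem 3.1 in \cite{Wang2024} together with the norm expression $\|v\|_{\mathcal{M}_{\psi,\omega}^{o}}=\frac{1}{k}\left(1+P_{\psi,\omega}(kv)\right)$, $k\in K_{\mathcal{M}}(v)$, from Theorem \ref{Norminkothedual}, and this is exactly your chain of inequalities (Hardy--Littlewood, the Young-type inequality, and $\rho_{\varphi,\omega}(x)\leq 1$), whose equality cases are (1)--(4). The step you flag as the main obstacle is already settled by your Hardy-lemma argument with an arbitrary $h\prec\omega$, and (4) is literally the equality case of that inequality written in terms of $P_{\psi,\omega}$, so you do not need an interval-by-interval analysis of $\omega^{v^{*}}$.
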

\begin{proof}
	The proof is almost the same as that of Theorem 3.1 in \cite{Wang2024}, since 
	$x$ has the same norm expression.
	\end{proof}
\begin{lemma}\cite[Theorem 2.48]{Chen1996}
	Let $x\in \Lambda_{\varphi, \omega}$, and $\theta(x)\neq 0$. Then there exist two singular functionals $s_{1}$ and $s_{2}$ such that $s_{1}\neq s_{2}$ and $s_{1}(x)=s_{2}(x)=\theta(x)$.
\end{lemma}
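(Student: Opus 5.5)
We follow the classical Orlicz-space argument (cf. \cite[Theorem 2.48]{Chen1996}), adapted to the rearrangement-invariant setting. Fix $x\in\Lambda_{\varphi,\omega}$ with $\theta(x)>0$. By the lemma from \cite{Chen1996,Wang2024} identifying $d(x)=\theta(x)$, the distance from $x$ to $E_{\varphi,\omega}$ equals $\theta(x)$. In particular $x\notin E_{\varphi,\omega}$, and $E_{\varphi,\omega}$ is a closed subspace.

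The first step applies Hahn--Banach to the quotient $\Lambda_{\varphi,\omega}/E_{\varphi,\omega}$. The class $[x]$ has quotient norm $\theta(x)$, so there is a functional $s$ on $\Lambda_{\varphi,\omega}$ with the following properties:
\begin{itemize}
\item $s|_{E_{\varphi,\omega}}=0$;
\item $\|s\|=1$;
\item $s(x)=\theta(x)$.
\end{itemize}
Such an $s$ vanishes on $E_{\varphi,\omega}$, so it annihilates every function of order-continuous norm. Hence it is singular in the sense of the decomposition lemma from \cite{kan1982}: its regular part $L_v$ must vanish. Indeed, $L_v$ vanishes on $E_{\varphi,\omega}$, which contains all bounded functions with support of finite measure, and this forces $v=0$.

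The second step produces a second, different singular functional with the same value at $x$. The plan is to split $x$ into two pieces that both carry the full singular part. Since $\theta(x)>0$, the function $x^{*}$ is not in $E_{\varphi,\omega}$. We then choose a decomposition $x=x\chi_{A}+x\chi_{A^{c}}$ with both $\theta(x\chi_A)=\theta(x\chi_{A^c})=\theta(x)$. The intended choice depends on where $x$ fails to lie in $E$:
\begin{itemize}
\item If $x^{*}$ is unbounded near $0$, take a sequence of disjoint level sets $\{2^{n}\le |x|<2^{n+1}\}$ and let $A$ be the union of the even-indexed ones.
\item If instead the failure occurs at infinity (a non-integrable tail), take the even-indexed annuli of the tail.
\end{itemize}
Because $\theta$ only depends on the behaviour of the ``tail'' of the modular, each of the two halves should still satisfy $\rho_{\varphi,\omega}(x\chi_A/\lambda)=\infty$ for $\lambda<\theta(x)$. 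We expect this to be the main obstacle. The modular is computed via $(x\chi_A)^{*}$, not pointwise, so one cannot simply add the modulars of the two pieces. We would try to arrange the even/odd splitting so that $(x\chi_A)^{*}\ge x^{*}(2\,\cdot)$ up to a small error, and then use that $\omega$ is decreasing to get $\int\varphi(x^{*}(2t)/\lambda)\omega(t)\,dt\ge \tfrac12\int\varphi(x^{*}/\lambda)\omega=\infty$. If this fails, a cruder route is to pass to a subsequence of blocks whose modular contributions each exceed $1$.

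The final step applies the first step to $x\chi_A$ and to $x\chi_{A^c}$. This yields singular $s_A,s_{A^c}$ of norm one with $s_A(x\chi_A)=\theta(x)$ and $s_{A^c}(x\chi_{A^c})=\theta(x)$. We then modify them to
\[
s_1(y)=s_A(y\chi_A),\qquad s_2(y)=s_{A^c}(y\chi_{A^c}).
\]
Multiplication by $\chi_A$ is a contraction, so $\|s_i\|\le 1$. Both functionals remain singular, since $E_{\varphi,\omega}$ is an ideal. Moreover $s_1(x)=s_2(x)=\theta(x)$. Finally, $s_1(x\chi_A)=\theta(x)\neq 0$ while $s_2(x\chi_A)=0$, so $s_1\neq s_2$.
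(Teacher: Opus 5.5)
\textbf{Verdict: there is a genuine gap.} The splitting step is the whole content of the statement, and the rule you propose for it fails.

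Your first and last steps are sound. Hahn--Banach on $\Lambda_{\varphi,\omega}/E_{\varphi,\omega}$, together with $d(x)=\theta(x)$, gives a singular $s$ with $\|s\|=1$ and $s(x)=\theta(x)$. Given a set $A$ with $\theta(x\chi_A)=\theta(x\chi_{A^c})=\theta(x)$, your functionals $s_1(y)=s_A(y\chi_A)$ and $s_2(y)=s_{A^c}(y\chi_{A^c})$ are singular and distinct, equal $\theta(x)$ at $x$, and have norm exactly one, since a singular $s$ satisfies $|s(y)|\le\|s\|\,\theta(y)$. The paper has no proof of its own: it only cites Chen's Orlicz-space theorem. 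So for $\Lambda_{\varphi,\omega}$ everything rests on the existence of such an $A$, and that is the step you leave open.

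\textbf{Your level-set rule fails.} Take $\omega\equiv 1$ (so $\Lambda_{\varphi,\omega}=L_\varphi(0,\infty)$) and $\varphi(u)=e^u-1$. Let $G_n\subset(0,1)$ be disjoint with $\mu(G_{2k})=e^{-4^k}$ and $\mu(G_{2k+1})=e^{-k2^{2k+1}}$ for $k\ge 1$, and put $x=\sum_n 2^n\chi_{G_n}$. Then:
\begin{itemize}
\item $G_n=\{2^n\le |x|<2^{n+1}\}$ exactly;
\item the even part has $\theta=1=\theta(x)$;
\item the odd part lies in $E_{\varphi,\omega}$, so every singular functional kills it, giving $s_2(x)=0$.
\end{itemize}
In particular, no comparison $(x\chi_{A^c})^*\ge x^*(c\,\cdot)$ can hold for this $A$. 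Your case distinction is also misdirected. What matters is at which end ($t\to 0$ or $t\to\infty$) the integral $\int\varphi(x^*/\lambda)\omega$ diverges for $\lambda$ near $\theta(x)$, not whether $x^*$ is unbounded. Your fallback of blocks with modular at least $1$ needs a lower bound for the modular of a union of blocks. As you note, this cannot come from additivity, because $\rho_{\varphi,\omega}$ is only subadditive on disjointly supported pieces.

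\textbf{The missing idea is to split in the rearranged coordinates.} Since $W(\infty)=\infty$ and $x\in\Lambda_{\varphi,\omega}$, we have $x^*(\infty)=0$. Hence there is a measure-preserving $\sigma$ with $|x|=x^*\circ\sigma$ on $\mathrm{supp}\,x$. For $A=\sigma^{-1}(G\cap\mathrm{supp}\,x^*)$ one has $(x\chi_A)^*=(x^*\chi_G)^*$.

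Take $G=\bigcup_{n\in\mathbb{Z}}[2^n,\tfrac32\,2^n)$. Then $\mu(G\cap(0,u))>u/4$ for every $u>0$, hence $(x^*\chi_G)^*(t)\ge x^*(4t)$. Since $\omega(s/4)\ge\omega(s)$,
\[
\rho_{\varphi,\omega}\Big(\frac{x\chi_A}{\lambda}\Big)\ge\int_0^\infty\varphi\Big(\frac{x^*(4t)}{\lambda}\Big)\omega(t)\,dt=\frac14\int_0^\infty\varphi\Big(\frac{x^*(s)}{\lambda}\Big)\omega\Big(\frac{s}{4}\Big)ds\ge\frac14\,\rho_{\varphi,\omega}\Big(\frac{x}{\lambda}\Big).
\]
The complementary right halves of the dyadic intervals give the same bound for $x\chi_{A^c}$. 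Therefore $\theta(x\chi_A)=\theta(x\chi_{A^c})=\theta(x)$, with no case distinction needed.

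Your block route also works once each block's contribution is measured in place. By the Hardy--Littlewood inequality and the monotonicity of $\omega$,
\[
\rho_{\varphi,\omega}\big(x\chi_{\sigma^{-1}(G)}/\lambda\big)\ge\int_G\varphi(x^*/\lambda)\,\omega\,dt,
\]
and the right-hand side is additive in $G$. With either version of this lemma inserted, the rest of your argument goes through.
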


\begin{theorem}\cite[Theorem 3.3]{Wang2024}
	Let $x\in \Lambda_{\varphi, \omega}$, then $Grad(x) \subset\mathcal{M}_{\psi, \omega}^{o}$ if and only if $\theta(x)<1$.
\end{theorem}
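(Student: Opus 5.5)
The plan is to prove the two implications separately. The link between them is the decomposition of supporting functionals from \cite[Theorem 9, p. 36]{kan1982}, combined with the norm additivity $\|f\|^{o}=\|v\|_{\mathcal{M}_{\psi,\omega}^{o}}+\|s\|^{o}$ from \cite[Lemma 2.22]{Wang2024}. Here $Grad(x)$ is the set of supporting functionals at $x\in S(\Lambda_{\varphi,\omega})$. The claim is that every such $f=L_{v}+s$ has $s=0$ exactly when $\theta(x)<1$.

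\textbf{Sufficiency ($\theta(x)<1\Rightarrow Grad(x)\subset \mathcal{M}_{\psi,\omega}^{o}$).}
1. Take $f\in Grad(x)$ and write $f=L_{v}+s$. Then
$$1=f(x)=\int_{0}^{\infty}x(t)v(t)\,dt+s(x)\le \|v\|_{\mathcal{M}_{\psi,\omega}^{o}}\|x\|+s(x)=\|v\|_{\mathcal{M}_{\psi,\omega}^{o}}+s(x).$$
2. Next bound $s(x)$. Because $s$ is singular, it vanishes on $E_{\varphi,\omega}$. So for every $x_{e}\in E_{\varphi,\omega}$ we get $s(x)=s(x-x_{e})\le \|s\|^{o}\,\|x-x_{e}\|$. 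Taking the infimum over $x_{e}$ and applying $d(x)=\theta(x)$ \cite[Theorem 1.43]{Chen1996} gives $s(x)\le \theta(x)\|s\|$.
3. Combine the two bounds with $\|v\|_{\mathcal{M}_{\psi,\omega}^{o}}+\|s\|=1$:
$$1\le 1-\|s\|+\theta(x)\|s\|=1-(1-\theta(x))\|s\|.$$
Since $\theta(x)<1$, this forces $\|s\|=0$. Hence $f=L_{v}$ with $v\in \mathcal{M}_{\psi,\omega}^{o}$.

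\textbf{Necessity ($Grad(x)\subset\mathcal{M}_{\psi,\omega}^{o}\Rightarrow\theta(x)<1$).} Argue by contrapositive. Note that $\theta(x)\le d(x)\le \|x\|=1$, so the only case to exclude is $\theta(x)=1$.
1. Assume $\theta(x)=1$. By \cite[Theorem 2.48]{Chen1996}, the lemma just stated, there is a singular functional $s_{1}$ with $s_{1}(x)=\theta(x)=1$.
2. Normalize it. The construction in \cite{Chen1996} is expected to give $\|s_{1}\|=1$. If it does not, replace $s_{1}$ by $s_{1}/\|s_{1}\|$. Step 2 of sufficiency gives $1=s_{1}(x)\le\|s_{1}\|$, and a Hahn--Banach construction on $\Lambda_{\varphi,\omega}/E_{\varphi,\omega}$, with quotient norm $\theta$, yields $\|s_{1}\|=1$. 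Here one uses that for singular functionals the relevant dual norm equals $\theta$-duality, via \cite[Corollary 1.49]{Chen1996}.
3. Then $s_{1}(x)=1=\|s_{1}\|$, so $s_{1}\in Grad(x)$. A nonzero singular functional is not of the form $L_{v}$, because the decomposition is unique. This contradicts $Grad(x)\subset \mathcal{M}_{\psi,\omega}^{o}$.

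\textbf{Main obstacle.} The delicate step is the norm bookkeeping in the singular part. We must check that the dual norm used for $s$ in Lemma 2.22 matches the one for which $s(x)\le\theta(x)\|s\|$ holds and for which the functional from Theorem 2.48 has norm exactly $1$. This is where \cite[Corollary 1.49]{Chen1996} ($\|s\|=\|s\|^{o}$ for $s\in F$) is essential, since it makes the choice between the Luxemburg and Orlicz dual norms irrelevant for the singular part.
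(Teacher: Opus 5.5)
Your proof is correct and follows essentially the standard argument behind the cited result \cite[Theorem 3.3]{Wang2024}. The paper gives no proof of its own here, but the auxiliary lemmas it collects are exactly your ingredients: the decomposition $f=L_{v}+s$, the additivity $\|f\|^{o}=\|v\|_{\mathcal{M}_{\psi,\omega}^{o}}+\|s\|^{o}$, the identity $d(x)=\theta(x)$, and \cite[Theorem 2.48]{Chen1996}.

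The only loose point is the fallback ``replace $s_{1}$ by $s_{1}/\|s_{1}\|$''. If $\|s_{1}\|>1$ this would destroy $s_{1}(x)=1$, so on its own it would not give a supporting functional. It is also unnecessary: your Hahn--Banach construction on $\Lambda_{\varphi,\omega}/E_{\varphi,\omega}$ gives $|s_{1}(y)|\le\theta(y)=d(y)\le\|y\|$ directly, and together with $s_{1}(x)=1$ this yields $\|s_{1}\|=1$.
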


\begin{theorem}\cite[Theorem 3.4]{Wang2024}
	Let
	$\rho_{\varphi, \omega}(\frac{|x(t)|}{\|x\|})=1$. Then $v\in S(\mathcal{M}_{\varphi, \omega}^{o})$ is a supporting functional of $x$ if and only if the following conditions are satisfied\\
	(1) $v=\frac{\varpi}{\|\varpi\|_{\mathcal{M}_{\psi, \omega}}^{o}}$ for some $\varpi$ satisfying
	$$
	p_{-}(\frac{x^{*}(t)}{\|x\|_{\varphi, \omega}})\omega(t)\leq \varpi^{*}(t)  \leq p(\frac{x^{*}(t)}{\|x\|_{\varphi, \omega}})\omega(t),\; a.e. \:on\: R_{+},
	$$
	(2) $\int_{0}^{\infty}x^{*}(t)v^{*}(t)dt=\int_{0}^{\infty} x(t)v(t)dt$
\end{theorem}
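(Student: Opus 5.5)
The plan is to reduce the statement to the equality case in two inequalities: the Hardy--Littlewood inequality, and Young's inequality integrated against $\omega$. Normalize so that $\|x\|_{\varphi,\omega}=1$; the general case follows by replacing $x$ with $x/\|x\|$. The hypothesis $\rho_{\varphi,\omega}(|x|)=1$ then says that $x$ lies on the unit sphere with modular equal to one.

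The first step is to compare with the norm. For any $v\in S(\mathcal{M}_{\psi,\omega}^{o})$, the Hardy--Littlewood inequality gives
\[
\int_0^\infty x v\,dt\le \int_0^\infty x^*v^*\,dt .
\]
Next, write $v^*=\varpi^*/\|\varpi\|^o$ and use the Amemiya form of the $\mathcal{M}^o$ norm together with the lemma expressing $P_{\psi,\omega}$ through the level function $(v^*)^0$ and the inverse level function $\omega^{v^*}$. Applying Young's inequality pointwise to $x^*(t)$ and $kv^*(t)/\omega^{v^*}(t)$, weighted by $\omega^{v^*}$, and then using $\omega^{v^*}\prec\omega$ together with $x^*$ decreasing, I expect to obtain
\[
\int_0^\infty x^*v^*\,dt\le \frac1k\Bigl(\rho_{\varphi,\omega}(x)+P_{\psi,\omega}(kv)\Bigr).
\]
Since $\rho_{\varphi,\omega}(x)=1$, the right-hand side is $\|v\|^o=1$ when $k\in K_{\mathcal{M}}(v)$, by the Amemiya-type norm lemma cited from \cite{Wang2024}. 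So $v$ supports $x$ exactly when both inequalities are equalities. Equality in the first is condition (2). For the second, equality should force $v^*$ to coincide with its level function, so that $\omega^{v^*}=\omega$ on the support of $x^*$, and should force Young's equality $x^*(kv^*/\omega)=\varphi(x^*)+\psi(kv^*/\omega)$ a.e. By the description of Young's equality case, the latter is equivalent to
\[
p_-(x^*)\omega\le kv^*\le p(x^*)\omega \quad\text{a.e.}
\]
Setting $\varpi=kv$ then gives condition (1), because $\|\varpi\|^o=k$.

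For sufficiency, assume (1) and (2) with $\varpi$ satisfying the sandwich. Then $\varpi^*/\omega\in[p_-(x^*),p(x^*)]$ and this ratio is decreasing, so $\varpi^*$ is already its own level function and $\omega^{\varpi^*}=\omega$. Young's equality gives
\[
\int_0^\infty x^*\varpi^*\,dt=\rho_{\varphi,\omega}(x)+\int_0^\infty\psi\Bigl(\frac{\varpi^*}{\omega}\Bigr)\omega\,dt=1+P_{\psi,\omega}(\varpi).
\]
It then remains to show $\|\varpi\|^o=1+P_{\psi,\omega}(\varpi)$, meaning that $k=1$ lies in $K_{\mathcal{M}}(\varpi)$. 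This amounts to checking that $\int\psi(p(\varpi^*/\omega))\omega\ge1$ and $\int\psi(p_-(\varpi^*/\omega))\omega\le1$. I plan to obtain both from $\rho_{\varphi,\omega}(x)=1$, via the identity $\varphi(u)+\psi(p(u))=u\,p(u)$ combined with the sandwich.

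The main obstacle is the equality analysis in the necessity direction. One must show that equality in the level-function and submajorization step forces $\omega^{v^*}=\omega$ wherever $x^*$ is nonconstant. One must also handle the case where $\varphi$ is not an N-function, since then $K_{\mathcal{M}}(v)$ might be empty or $p$ might saturate at $B$. For the level-function point, I would argue that if a maximal level interval $(a_n,b_n)$ of $v^*$ meets a region where $x^*$ strictly decreases, then pairing against $(v^*)^0$ strictly increases the integral, contradicting equality. For the saturation issue, I would use Theorem~\ref{Norminkothedual}: in cases (1)--(3) one has $K_{\mathcal{M}}\neq\emptyset$, so the argument above applies. Case (4) should be excluded by the hypothesis $\rho_{\varphi,\omega}(x)=1$, or else treated as in the preceding theorem, where $x^*=B$ on $\operatorname{supp} v$, which is still consistent with the sandwich since there $p_-(x^*)\le B$.
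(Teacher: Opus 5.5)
The gap is in the necessity direction, at the step where you assert that equality forces $v^*$ to coincide with its level function, so that $\omega^{v^*}=\omega$ on the support of $x^*$. For $k\in K_{\mathcal{M}}(v)$, what your chain of inequalities actually yields is Young's equality for the pair $x^*$, $kv^*/\omega^{v^*}$, together with $\int_0^\infty\varphi(x^*)\omega^{v^*}dt=\int_0^\infty\varphi(x^*)\omega\,dt$. Since $kv^*/\omega^{v^*}=k(v^*)^0/\omega$, this is the sandwich $p_-(x^*)\omega\le k(v^*)^0\le p(x^*)\omega$ for the \emph{level function} $(v^*)^0$, not for $v^*$. Your proposed repair only has force where $x^*$ strictly decreases. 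On a maximal level interval of $v^*$ on which $x^*$ is constant, $\int x^*v^*=\int x^*(v^*)^0$, so nothing forces $v^*=(v^*)^0$ there, and knowing $\omega^{v^*}=\omega$ ``wherever $x^*$ is nonconstant'' does not give (1). In fact no repair is possible, because the ``only if'' half of the statement as written is false. Take $\varphi(u)=u^2$ (so $p(u)=2u$, $\psi(u)=u^2/4$), $\omega(t)=t^{-1/2}$, $x=2^{-1/2}\chi_{(0,1)}$ and $v=\sqrt{2}\,\chi_{(0,1)}$. Then $\rho_{\varphi,\omega}(x)=1=\|x\|_{\varphi,\omega}$. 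For $\|y\|_{\varphi,\omega}\le 1$, Chebyshev's integral inequality (both $(y^*)^2$ and $t^{-1/2}$ decrease) and Jensen's inequality give $1\ge\int_0^1(y^*)^2t^{-1/2}dt\ge 2\int_0^1(y^*)^2dt\ge 2\bigl(\int_0^1y^*dt\bigr)^2$. Hence $\int_0^\infty|yv|\,dt\le\sqrt{2}\int_0^1y^*dt\le 1$, with equality at $y=x$, so $\|v\|_{\mathcal{M}^o_{\psi,\omega}}=1=\int_0^\infty xv\,dt$. Consistently, $(v^*)^0=2^{-1/2}t^{-1/2}\chi_{(0,1)}$, and $\frac1k\bigl(1+P_{\psi,\omega}(kv)\bigr)=\frac1k+\frac k4$ attains its minimum $1$ at $k=2$. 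Thus $v$ supports $x$ and (2) holds. Yet every $\varpi$ with $v=\varpi/\|\varpi\|_{\mathcal{M}^o_{\psi,\omega}}$ is a positive multiple of $v$, so $\varpi^*$ is constant on $(0,1)$, whereas (1) demands $\varpi^*=p(x^*)\omega=\sqrt{2}\,t^{-1/2}$ a.e.\ there. Only the level-function version $2(v^*)^0=p(x^*)\omega$ holds.

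Your remark that the case $K_{\mathcal{M}}(v)=\emptyset$ is ruled out by $\rho_{\varphi,\omega}(x)=1$ is also incorrect. Let $\varphi(u)=u^2$ on $[0,1]$ and $\varphi=\infty$ on $(1,\infty)$, $\omega(t)=t^{-1/2}$, $W(m)<1$, and $x=\chi_{(0,m)}+b\chi_{(m,m+l)}$ with $0<b<1$ and $l$ chosen so that $\rho_{\varphi,\omega}(x)=1$. Every $y$ in the unit ball satisfies $y^*\le 1$, so $v=m^{-1}\chi_{(0,m)}$ has norm one and supports $x$. The Amemiya infimum for $v$ is not attained, since $\frac1k(1+P_{\psi,\omega}(kv))=1+\frac{1-W(m)}{k}$ for large $k$. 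Moreover (1) fails on $(m,m+l)$, where $\varpi^*=0<p_-(b)\omega$. The sufficiency half, by contrast, is correct and easier than your route. You need neither that $\varpi^*/\omega$ is decreasing (false where $p$ jumps at a value on which $x^*$ is flat) nor that $1\in K_{\mathcal{M}}(\varpi)$ (whose defining integrals for $\mathcal{M}_{\psi,\omega}$ involve $\varphi$ and $q$, not $\psi$ and $p$). With $\|x\|_{\varphi,\omega}=1$, take $g=\omega$ in the definition of $P_{\psi,\omega}$ and $k=1$ in the Amemiya formula to get
\[
\|\varpi\|_{\mathcal{M}^o_{\psi,\omega}}\ge\int_0^\infty x^*\varpi^*dt=1+\int_0^\infty\psi\Bigl(\frac{\varpi^*}{\omega}\Bigr)\omega\,dt\ge 1+P_{\psi,\omega}(\varpi)\ge\|\varpi\|_{\mathcal{M}^o_{\psi,\omega}},
\]
and then (2) yields $\int_0^\infty xv\,dt=1$.
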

Then we come to $\Lambda_{\varphi, \omega}^{o}$. 

\begin{theorem}
	For Orlicz function $\varphi$ such that $\lim_{t\rightarrow\infty}\frac{\varphi(t)}{t}=B<\infty$ and $\lim_{t\rightarrow\infty}Bt-\varphi(t)<\infty$. Let $x\in S(\Lambda_{\varphi, \omega}^{o})$ satisfy $\psi(B)\int_{0}^{\mu(supp\: x)}\omega(t)dt\leq 1$.
	If $v\in \mathcal{M}_{\psi, \omega}$ attains its norm at $x$, then\\
	\begin{equation*}
		\|v\|_{\mathcal{M}_{\psi, \omega}}=\frac{1}{B}\|v\|_{M_{W}}=\frac{1}{B}\sup_{\alpha\in R^{+}}\frac{1}{W(\alpha)}\int_{0}^{\alpha}v^{*}(t)dt.
	\end{equation*}
\end{theorem}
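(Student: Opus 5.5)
Under the hypothesis $\psi(B)\int_{0}^{\mu(supp\, x)}\omega(t)\,dt\leq 1$, Lemma \ref{norm123} gives $k^{**}(x)=\infty$ and
\begin{equation*}
1=\|x\|^{o}_{\varphi,\omega}=B\int_{0}^{\infty}x^{*}(t)\omega(t)\,dt=B\|x\|_{\Lambda_{\omega}}.
\end{equation*}
On this part of the unit sphere the Orlicz norm is therefore just $B$ times the Lorentz norm. The plan is to prove the claimed identity for $\|v\|_{\mathcal{M}_{\psi,\omega}}$ through two inequalities, using the K\"othe duality between $\Lambda_{\omega}$ and $M_{W}$. The norm of $v$ as a functional on $\Lambda^{o}_{\varphi,\omega}$ is $\|v\|_{\mathcal{M}_{\psi,\omega}}$ by the duality lemma. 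I read the statement as a computation for the functional that supports $x$: I will show $\|v\|_{\mathcal{M}_{\psi,\omega}}=\int x v = \frac1B\|v\|_{M_W}$.

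\emph{Upper bound for the pairing.} By the Hardy--Littlewood inequality and the duality $\Lambda_\omega'=M_W$,
\begin{equation*}
\int_{0}^{\infty}x(t)v(t)\,dt\le\int_{0}^{\infty}x^{*}(t)v^{*}(t)\,dt\le\|x\|_{\Lambda_{\omega}}\|v\|_{M_{W}}=\frac{1}{B}\|v\|_{M_{W}}.
\end{equation*}
Since $v$ attains its norm at $x$, this yields $\|v\|_{\mathcal{M}_{\psi,\omega}}\le \frac1B\|v\|_{M_W}$.

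\emph{Lower bound.} Here I test $v$ against elements of $\Lambda^{o}_{\varphi,\omega}$ that are concentrated on sets of small measure, so that Lemma \ref{norm123} again turns the Orlicz norm into $B\|\cdot\|_{\Lambda_\omega}$. Fix $\alpha>0$ with $\psi(B)W(\alpha)\le 1$. Let $E_\alpha$ be a set of measure $\alpha$ on which $|v|$ takes its largest values, and put $y=\mathrm{sign}(v)\chi_{E_\alpha}/(BW(\alpha))$. Lemma \ref{norm123} gives $\|y\|^{o}=1$, and
\begin{equation*}
\int_{0}^{\infty} y(t)v(t)\,dt=\frac{1}{B}\cdot\frac{\int_0^\alpha v^*(t)\,dt}{W(\alpha)},
\end{equation*}
so $\|v\|_{\mathcal{M}_{\psi,\omega}}\ge \frac1B\int_0^\alpha v^*\,dt/W(\alpha)$ for every such $\alpha$.

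This only covers $\alpha\le\alpha_0$, where $\psi(B)W(\alpha_0)=1$. The main obstacle is the supremum over larger $\alpha$, and I would handle it through norm attainment at $x$. The equalities forced in the upper-bound chain make $x^{*}$ an extremal for the $\Lambda_\omega$--$M_W$ duality. So $x^*$ must be concentrated where $\int_0^\alpha v^*/W(\alpha)$ reaches its supremum, namely on level sets of $v^*$ relative to $\omega$ (in the sense of Halperin's level function). Because $\mu(supp\, x)\le\alpha_0$, the supremum defining $\|v\|_{M_W}$ is then attained, or approached, at some $\alpha\le\mu(supp\, x)\le\alpha_0$. This reduces the general case to the small-$\alpha$ estimate above and gives equality throughout.
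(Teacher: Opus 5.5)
There is a genuine gap, in the lower bound for $\alpha>\alpha_0$.

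Your upper bound is correct, and it is the right way to get the ``$\le$'' direction. Norm attainment together with Lemma \ref{norm123}, applied to this particular $x$, gives $\|v\|_{\mathcal{M}_{\psi,\omega}}=\int_0^\infty xv\le\|x\|_{\Lambda_\omega}\|v\|_{M_W}=\frac1B\|v\|_{M_W}$.

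The large-$\alpha$ step is circular. You claim that ``the equalities forced in the upper-bound chain'' make $x^*$ extremal for the $\Lambda_\omega$--$M_W$ duality. But equality in $\int_0^\infty x^*v^*\le\|x\|_{\Lambda_\omega}\|v\|_{M_W}$ is forced only once you know $\|v\|_{\mathcal{M}_{\psi,\omega}}\ge\frac1B\|v\|_{M_W}$, which is exactly the inequality you are trying to prove. Norm attainment gives less. Writing $\int_0^\infty x^*v^*=\int V\,d(-x^*)$ with $V(t)=\int_0^t v^*$ (Hardy's lemma), you get $\|v\|_{\mathcal{M}_{\psi,\omega}}\le\frac1B\sup_{\alpha\le\mu(\mathrm{supp}\,x)}V(\alpha)/W(\alpha)$. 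Combined with your small-$\alpha$ test functions, this identifies $\|v\|_{\mathcal{M}_{\psi,\omega}}$ only with the supremum over $\alpha\le\alpha_0$. Nothing in your argument rules out $V(\alpha)/W(\alpha)$ being larger at some $\alpha>\alpha_0$.

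The missing idea is elementary and makes the large-$\alpha$ discussion unnecessary. Since $\varphi$ is convex with $\varphi(0)=0$, $\varphi(u)/u$ increases to $B$, so $\varphi(u)\le Bu$ for all $u\ge 0$. Hence $\rho_{\varphi,\omega}(ky)\le kB\|y\|_{\Lambda_\omega}$, and the Amemiya formula yields
\[
\|y\|^{o}_{\varphi,\omega}\le\inf_{k>0}\Bigl(\frac1k+B\|y\|_{\Lambda_\omega}\Bigr)=B\|y\|_{\Lambda_\omega}
\]
for every $y$, with no restriction on its support. The support condition in Lemma \ref{norm123} is needed only for the reverse inequality, which comes from Young's inequality $\varphi(u)\ge Bu-\psi(B)$.

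So your test function $y=\mathrm{sign}(v)\chi_{E_\alpha}/(BW(\alpha))$ satisfies $\|y\|^{o}_{\varphi,\omega}\le 1$ for every $\alpha>0$. This gives $\|v\|_{\mathcal{M}_{\psi,\omega}}\ge\frac1B\|v\|_{M_W}$ for all $v$, without using norm attainment. Together with your upper bound, the theorem follows.

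For comparison, the paper's one-line proof simply identifies the unit sphere of $\Lambda^{o}_{\varphi,\omega}$ with that of $B\|\cdot\|_{\Lambda_\omega}$ via Lemma \ref{norm123}. The ``$\ge$'' half of that identification is exactly the global inequality above. The ``$\le$'' half is what your norm-attainment argument supplies.
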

\begin{proof}
	From lemma \ref{norm123}
	\begin{align*}
		\|v\|_{\mathcal{M}_{\psi, \omega}}
		&=\sup\left\{\int_{0}^{\infty} x(t)v(t)dt: \|x\|_{\Lambda_{\varphi, \omega}^{o}}=1\right\}\\
		&=\frac{1}{B}\sup\left\{\int_{0}^{\infty} Bx(t)v(t)dt: \|Bx\|_{\Lambda_{\omega}}=1\right\}\\
		&=\frac{1}{B}\|v\|_{M_{W}}\\
		&=\frac{1}{B}\sup\frac{1}{W(\alpha)}\int_{0}^{\alpha}v^{*}(t)dt.
	\end{align*}
\end{proof}

\begin{theorem}\label{12}
	For Orlicz function $\varphi$ such that $\lim_{t\rightarrow\infty}\frac{\varphi(t)}{t}=B<\infty$ and $\lim_{t\rightarrow\infty}Bt-\varphi(t)<\infty$. Let $x\in S(\Lambda_{\varphi, \omega}^{o})$ satisfying $\psi(B)\int_{0}^{\mu(supp\: x)}\omega(t)dt\leq 1$.
	$f=L_{v}+s$ attains its norm at $x$ if and only if \\
	(1) $s=0$.\\
	(2) $\int_{0}^{\infty} x(t)v(t)dt=\int_{0}^{\infty} x^{*}(t)v^{*}(t)dt$.\\
	(3) $v^{*}(t)= B\|f\|\omega(t)$ on $(0, supp\: x).$ \\
\end{theorem}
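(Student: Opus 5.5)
We work in $(\Lambda^{o}_{\varphi,\omega})^{*}$, where $f=L_v+s$ with $v\in\mathcal{M}_{\psi,\omega}$ and $s\in F$. By the norm splitting lemma (Lemma 2.22 of \cite{Wang2024}) we have $\|f\|=\|v\|_{\mathcal{M}_{\psi,\omega}}+\|s\|$. By Lemma \ref{norm123}, the hypothesis on $x$ gives $\|x\|^{o}_{\varphi,\omega}=B\|x\|_{\Lambda_\omega}=1$, i.e. $\int_0^\infty x^{*}\omega\,dt=1/B$.

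\textbf{Step 1: $s=0$.} Since $\psi(B)<\infty$ and $\psi(u)=\infty$ for $u>B$, the function $\varphi$ grows at most linearly. Hence $\rho_{\varphi,\omega}(\lambda x)\le \lambda B\|x\|_{\Lambda_\omega}<\infty$ for every $\lambda>0$, so $x\in E_{\varphi,\omega}$ and $\theta(x)=0$. A singular functional vanishes on $E_{\varphi,\omega}$, so $s(x)=0$. If $f(x)=\|f\|$, then
\[
\|v\|_{\mathcal{M}_{\psi,\omega}}+\|s\| = L_v(x) \le \|v\|_{\mathcal{M}_{\psi,\omega}},
\]
which forces $\|s\|=0$.

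\textbf{Step 2: the chain of inequalities.} Now $f=L_v$, and by the preceding theorem $\|v\|_{\mathcal{M}_{\psi,\omega}}=\frac1B\|v\|_{M_W}$. The Hardy–Littlewood inequality together with the Lorentz–Marcinkiewicz duality gives
\[
\int_0^\infty xv\,dt\le\int_0^\infty x^{*}v^{*}\,dt\le \|v\|_{M_W}\,\|x\|_{\Lambda_\omega}=\frac1B\|v\|_{M_W}=\|f\|.
\]
Norm attainment is therefore equivalent to equality in both inequalities. Equality in the first inequality is exactly condition (2).

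\textbf{Step 3: equality in the duality inequality.} This is the main obstacle. Write $V(t)=\int_0^t v^{*}$, so that $V\le \|v\|_{M_W}W=B\|f\|W$. Integration by parts (Abel summation against the decreasing $x^{*}$) gives
\[
\int_0^\infty x^{*}v^{*}\,dt=\int_0^\infty V\,d(-x^{*})+\lim x^{*}V,
\]
and similarly for $W$. So equality holds if and only if $V(t)=B\|f\|W(t)$ on the support of the measure $-dx^{*}$, including the endpoint $t=\mu(\operatorname{supp}x)$ where $x^{*}$ drops to $0$.

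\emph{Necessity.} On a subinterval where $x^{*}$ is constant and $V< B\|f\|W$ at both endpoints, we would need to argue directly. I would first try to use the assumption $\psi(B)W(\mu(\operatorname{supp}x))\le 1$ together with the freedom in $x$; since $x$ is fixed, however, the intended reading is presumably that equality $V=B\|f\|W$ is forced on all of $(0,\mu(\operatorname{supp}x))$. I would then differentiate $V\equiv B\|f\|W$ to get $v^{*}=B\|f\|\omega$ a.e. there, which is condition (3). If $x^{*}$ has flat pieces, only the endpoint equalities are forced; in that case I would combine them with $V\le B\|f\|W$ and the monotonicity of $v^{*}$ relative to $\omega$ to propagate equality inside the flat piece, checking whether this works or whether (3) must be read with that caveat.

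\emph{Sufficiency.} Conditions (1)–(3) give
\[
\int_0^\infty xv\,dt=\int_0^\infty x^{*}v^{*}\,dt=B\|f\|\int_0^\infty x^{*}\omega\,dt=\|f\|,
\]
using $\int_0^\infty x^{*}\omega\,dt=1/B$ from Lemma \ref{norm123}.
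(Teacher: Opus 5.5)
Your Step 1, the reduction in Step 2, and the sufficiency argument are sound. In Step 2 you do not need the preceding theorem: $\varphi(u)\le Bu$ gives $\|y\|^{o}_{\varphi,\omega}\le B\|y\|_{\Lambda_\omega}$ for every $y$, hence $\frac1B\|v\|_{M_W}\le\|v\|_{\mathcal{M}_{\psi,\omega}}=\|f\|$, and norm attainment forces equality throughout.

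The gap is the necessity of (3), and it cannot be closed, because (3) is not necessary. Your integration by parts correctly shows that attainment forces $V(t)=B\|f\|W(t)$ only for $t$ in the support of $-dx^{*}$. The proposed propagation across a flat piece $(a,b)$ of $x^{*}$ ``by monotonicity'' works only when $\omega$ is constant on $(a,b)$. In that case the averages of the decreasing $v^{*}$ over $(a,t)$ decrease in $t$ and are squeezed to $B\|f\|\omega$. Otherwise it fails, and the support hypothesis does not rescue it.

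Concretely, let $\omega=2\chi_{(0,1)}+\chi_{[1,\infty)}$, and let $\varphi(u)=u^{2}$ on $[0,\frac12]$ and $\varphi(u)=u-\frac14$ for $u\ge\frac12$, so $B=1$ and $\psi(B)=\frac14$. Take $x=\frac13\chi_{(0,2)}$, $v=\frac32\chi_{(0,2)}$ and $s=0$.
\begin{itemize}
\item Since $\psi(B)W(2)=\frac34\le 1$, Lemma \ref{norm123} gives $\|x\|^{o}_{\varphi,\omega}=\int_0^2x^{*}\omega\,dt=1$, and $f(x)=1$.
\item If $\|y\|^{o}_{\varphi,\omega}\le1$, then $y_1=y^{*}\chi_{(0,2)}$ satisfies $\|y_1\|^{o}_{\varphi,\omega}\le\|y\|^{o}_{\varphi,\omega}\le 1$ and also meets the support condition. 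Hence $\int_0^2y^{*}\omega\,dt=\|y_1\|^{o}_{\varphi,\omega}\le1$.
\item Since $\int_1^2y^{*}\le\int_0^1y^{*}$, we get $\int_0^2y^{*}\le\frac23\int_0^2y^{*}\omega$. Therefore $\int|yv|\le\frac32\int_0^2y^{*}\le1$.
\end{itemize}
Thus $\|f\|=1=f(x)$, and (1) and (2) hold. Yet $v^{*}=\frac32$, while $B\|f\|\omega\in\{1,2\}$ on all of $(0,2)$. Here $-dx^{*}=\frac13\delta_2$, and the only forced identity is $V(2)=3=W(2)$.

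So the correct third condition is the one your Step 3 actually produces: $\int_0^t v^{*}\,ds=B\|f\|W(t)$ for every $t$ in the support of $-dx^{*}$. Equivalently, $\int_0^\infty x^{*}(B\|f\|\omega-v^{*})\,dt=0$. This reduces to (3) when $\omega$ is constant on every interval of constancy of $x^{*}$ inside $(0,\mu(\operatorname{supp}x))$, for example when $x^{*}$ is strictly decreasing there.

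The paper's own proof has exactly this hole. It passes from $v\prec B\|f\|\omega$ and $\int_0^\infty x^{*}(B\|f\|\omega-v^{*})\,dt=0$ directly to $v^{*}=B\|f\|\omega$ a.e.\ on $\operatorname{supp}x$, which the example refutes. Your hesitation was justified. The right move is to state the weaker integrated condition, or to add the extra hypothesis on flat pieces of $x^{*}$, rather than to try to force (3).
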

\begin{proof}
	\textsl{Necessity. }
	Assume $f=L_{v}+s$ is norm-attainable at $x$.
	Since $x\in E_{\varphi, \omega}^{o}$, then $s=0$.
	If (2) is not satisfied, then $\int_{R^{+}}x(t)v(t)dt<\int_{R^{+}}x^{*}(t)v^{*}(t)dt$ and
	\begin{align*}
		\|f\|=f(x)
		&=\int_{0}^{\infty}x(t)v(t)dt + s(x)\\
		&=\int_{R^{+}}x(t)v(t)dt\\
		&<    \int_{0}^{\infty}Bx^{*}(t)\frac{v^{*}(t)}{B}dt \\
		&\leq \|Bx(t)\|_{\Lambda_{\omega}}\|\frac{v}{B}\|_{M_{W}}\\
		&\leq \|v(t)\|_{\mathcal{M}_{\psi, \omega}}\\
		&\leq \|f\|,
	\end{align*}
	a contradiction.
	Since $\|v\|_{\mathcal{M}_{\psi, \omega}}=\|f\|-\|s\|=\|f\|$, then it is easy to verify $v\prec B\|f\| \omega$.
	Therefore, for arbitrary $c>0$, $\int_{0}^{c} B\|f\|\omega(s)-v^{*}(s)ds \geq 0$.
	Since $f$ attain its norm at $x$, we have
	\begin{equation*}
		\begin{aligned}
			\|f\|
			&=f(x)=\int_{0}^{\infty}x(t)v(t)dt\\
			&=\int_{0}^{\infty} x^{*}(t)v^{*}(t)dt\\
			&\leq B\|f\|\int_{0}^{\infty} x^{*}(t)\omega(t)dt\\
			&=\|f\| 
		\end{aligned}
	\end{equation*}
	Then 
	\begin{equation*}
		\int_{0}^{\infty} x^{*}(t)(B\|f\|\omega(t)-v^{*}(t))dt=0
	\end{equation*}
	which implies 
	\begin{equation*}
		v^{*}(t)=B\|f\|\omega(t), a.e. on ~supp\: x
	\end{equation*}
	\textsl{Sufficiency. }
	Assume $f=L_{v}+s$ satisfying condition(1)(2)(3). Then
	\begin{align*}
		f(x)
		&=\int_{0}^{\infty}v(t)x(t)dt \\
		&=\int_{0}^{\infty}v^{*}(t)x^{*}(t)dt \\
		&=B\|f\|\int_{0}^{\infty}x^{*}(t)\omega(t)dt \\
		&=\|f\|.
	\end{align*}
	where the last equality follows from the fact that $\|x\|_{\Lambda^{o}_{\varphi, \omega}}=1$.
	Finally, $f$ is norm-attainable at $x$.
\end{proof}

\begin{theorem}
	Assume $\varphi$ satisfies the assumption in Theorem \ref{12}. Then 
	$v\in Grad(x)$ if and only if $v^{*}=B\omega$ a.e., and $\int_{0}^{\infty} x(t)v(t)dt=\int_{0}^{\infty}x^{*}(t)v^{*}(t)dt$
\end{theorem}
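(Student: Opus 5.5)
This is a specialization of Theorem \ref{12} to supporting functionals. Here $Grad(x)$ is the set of $f\in(\Lambda^{o}_{\varphi,\omega})^{*}$ with $\|f\|=1$ and $f(x)=1=\|x\|^{o}_{\varphi,\omega}$, and $x\in S(\Lambda^{o}_{\varphi,\omega})$ satisfies $\psi(B)\int_0^{\mu(supp\,x)}\omega\,dt\le 1$. By Theorem \ref{12}, any such $f=L_v+s$ has $s=0$, so $Grad(x)$ may be identified with a subset of $\mathcal{M}_{\psi,\omega}$. The condition $v^{*}=B\omega$ is then read on $(0,\mu(supp\,x))$, the interval on which Theorem \ref{12} pins down $v^{*}$; off that interval only the normalization $\|v\|_{\mathcal{M}_{\psi,\omega}}=1$ is required.

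\emph{Necessity.} Let $v\in Grad(x)$, so $f=L_v$ with $\|f\|=1$ attains its norm at $x$.
\begin{itemize}
\item Theorem \ref{12}(2) gives $\int_0^\infty x v\,dt=\int_0^\infty x^{*}v^{*}\,dt$.
\item Theorem \ref{12}(3), with $\|f\|=1$, gives $v^{*}=B\omega$ a.e.\ on $(0,\mu(supp\,x))$.
\end{itemize}
For completeness I would also rederive (3) directly, since this is the key inequality. By the theorem preceding Theorem \ref{12}, $\|v\|_{\mathcal{M}_{\psi,\omega}}=1$ means $\sup_\alpha W(\alpha)^{-1}\int_0^\alpha v^{*}=B$, i.e.\ $v^{*}\prec B\omega$. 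By Lemma \ref{norm123}, $B\int x^{*}\omega=1$. Then
\[
1=\int x^{*}v^{*}\le B\int x^{*}\omega=1 .
\]
The middle inequality follows by writing $x^{*}$ as a superposition of $\chi_{(0,c)}$ (layer cake representation) and applying the submajorization on each piece. Equality in it, for every level $c$ of $x^{*}$, forces $\int_0^c v^{*}=BW(c)$ at all such $c$. Hence $v^{*}=B\omega$ on the support of $x^{*}$.

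\emph{Sufficiency.} Assume $v^{*}=B\omega$ (on $(0,\mu(supp\,x))$) and that the rearrangement equality holds. Then
\[
\int_0^\infty xv\,dt=\int_0^\infty x^{*}v^{*}\,dt=B\int_0^\infty x^{*}\omega\,dt=\|x\|^{o}_{\varphi,\omega}=1,
\]
using Lemma \ref{norm123}. For the norm, $\|v\|_{\mathcal{M}_{\psi,\omega}}\ge\int xv=1$ because $\|x\|^o=1$. For the reverse bound, $v^{*}\le B\omega$ gives $v^{*}\prec B\omega$, so $\|v\|_{\mathcal{M}_{\psi,\omega}}=B^{-1}\|v\|_{M_W}\le 1$. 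Therefore $\|v\|=1=v(x)$ and $v\in Grad(x)$; equivalently, Theorem \ref{12} applies with $s=0$ and $\|f\|=1$.

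\textbf{Main obstacle.} The delicate step is the equality case $\int x^{*}(B\omega-v^{*})=0$ under submajorization. To get pointwise equality $v^{*}=B\omega$ a.e., one needs the level sets of $x^{*}$ to be fine enough. Where $x^{*}$ has flat pieces, only the integrals over those pieces are forced to agree, so $v^{*}$ is determined only in an averaged sense there. I would handle this using the fact that both $v^{*}$ and $\omega$ are decreasing: on a flat piece of $x^{*}$, a decreasing $v^{*}$ with $v^{*}\prec B\omega$ and matching integrals at both endpoints must be compared against the decreasing $B\omega$. If that comparison fails, I would restrict the claim to the level structure of $x^{*}$, as Theorem \ref{12}(3) implicitly does.
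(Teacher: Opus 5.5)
\noindent Your reduction is the same as the paper's: the singular part vanishes because $x\in E_{\varphi,\omega}$, and the statement becomes Theorem~\ref{12} with $\|f\|=1$. But the step you flag as the main obstacle is a genuine gap, and it cannot be closed, because the pointwise conclusion is false. Appealing to Theorem~\ref{12}(3) does not help. The paper's proof of that item makes exactly the same jump, from $\int_0^\infty x^*(B\|f\|\omega-v^*)\,dt=0$ to $v^*=B\|f\|\omega$, although $B\|f\|\omega-v^*$ has no sign; only its partial integrals $\int_0^c$ are nonnegative. Your layer-cake argument yields exactly $\int_0^c v^*\,dt=BW(c)$ for $c$ in the range of $\mu_x$. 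On an interval of constancy $(a,b)$ of $x^*$ this gives only $\int_a^b v^*=BW(a,b)$ and $\int_a^c v^*\le BW(a,c)$.

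\noindent\emph{Counterexample.} Take a weight $\omega$ that is not constant on $(0,a)$, where $\psi(B)W(a)\le 1$ (e.g.\ $\omega(t)=t^{-1/2}$ and $a$ small). Put $x=\chi_{(0,a)}/(BW(a))$ and $v=\frac{BW(a)}{a}\chi_{(0,a)}$. Then $\int xv\,dt=1$, and for every $y\in\Lambda_{\varphi,\omega}$ and $k>0$,
\[
\int_0^\infty yv\,dt\le \frac{BW(a)}{a}\int_0^a y^*\,dt\le B\int_0^a y^*\omega\,dt\le \frac1k\Big(\psi(B)W(a)+\rho_{\varphi,\omega}(ky)\Big)\le\frac1k\big(1+\rho_{\varphi,\omega}(ky)\big).
\]
The three inequalities come from Hardy--Littlewood, from Chebyshev's inequality for the decreasing functions $y^*,\omega$, and from Young's inequality $kBy^*\le\varphi(ky^*)+\psi(B)$ integrated against $\omega$ over $(0,a)$. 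Hence $\|L_v\|=\|v\|_{\mathcal{M}_{\psi,\omega}}\le 1$. Taking $y=x$ gives $\|x\|^o_{\varphi,\omega}\ge 1$, while $\|x\|^o_{\varphi,\omega}\le B\int x^*\omega\,dt=1$. So $v\in Grad(x)$, yet $v^*\neq B\omega$ on a subset of $(0,a)$ of positive measure.

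\noindent What your argument actually proves is the following: a $v$ with $\|v\|_{\mathcal{M}_{\psi,\omega}}=1$ lies in $Grad(x)$ if and only if $\int xv=\int x^*v^*$ and $\int_0^c v^*=BW(c)$ for every $c$ in the range of $\mu_x$. This reduces to $v^*=B\omega$ a.e.\ on $(0,\mu(\mathrm{supp}\,x))$ when $\omega$ is constant on each interval of constancy of $x^*$ there (e.g.\ when $x^*$ is strictly decreasing on its support). The example shows this restriction cannot be dropped. Your fallback of ``restricting to the level structure of $x^*$'' is the right repair, but it changes the statement.

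\noindent Your sufficiency direction has a separate flaw. The identity $\|v\|_{\mathcal{M}_{\psi,\omega}}=B^{-1}\|v\|_{M_W}$ from the theorem preceding Theorem~\ref{12} is proved only for $v$ that already attains its norm at such an $x$, so using it here is circular. In general only $\|v\|_{\mathcal{M}_{\psi,\omega}}\ge B^{-1}\|v\|_{M_W}$ holds. Moreover, $v^*\le B\omega$ is not among your hypotheses beyond $\mu(\mathrm{supp}\,x)$. For instance, take $x=x^*$ as above and $v=B\omega\chi_{(0,A)}$ with $A>a$ and $\psi(B)W(A)>1$. Then $v^*\le B\omega$, $v^*=B\omega$ on $(0,a)$, the rearrangement equality holds, and $\int xv=1$. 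However, since $Bk-\varphi(k)\uparrow\psi(B)$, the Amemiya formula gives $\|\chi_{(0,A)}\|^o_{\varphi,\omega}<BW(A)=\int_0^\infty \chi_{(0,A)}v\,dt$. Hence $\|v\|_{\mathcal{M}_{\psi,\omega}}>1$ and $v\notin Grad(x)$. The normalization $\|v\|_{\mathcal{M}_{\psi,\omega}}=1$ must therefore be an explicit hypothesis of the characterization; it cannot be derived from information about $v^*$ on $(0,\mu(\mathrm{supp}\,x))$.
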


\begin{proof}
	Since $\int_{R^{+}}\omega(t)dt=\infty$, therefore $x\in E_{\varphi, \omega}^{o}$. For $f\in S(\Lambda_{\varphi, \omega}^{*})$, $f=L_{v}+s$. Then $s=0$. From theorem \ref{12} we  have the proof.
\end{proof}
\begin{theorem}\label{NormattainOrlicznorm}\cite[Theorem 3.1]{Wang2024}
	Let $\varphi$ be an Orlicz function and $\omega$ be a decreasing weight.
	$f=L_{v}+s(0\neq v\in \mathcal{M}_{\psi, \omega}, s\in F)$ attains its norm at $x\in S(\Lambda_{\varphi, \omega}^{o})$ ($K(x)\neq \emptyset$) if and only if\\
	(1)$\int_{0}^{\infty}x(t)v(t)=\int_{0}^{\infty}x^{*}(t)v^{*}(t)$ \\
	(2)$s(kx)=\|s\|$.\\
	(3)$P_{\varphi, \omega}(\frac{v}{\|f\|})+\frac{\|s\|}{\|f\|}=1$.\\
	(4)$\int_{0}^{\infty}\frac{kv(t)x(t)}{\|f\|}dt=\rho_{\varphi, \omega}(kx)+P_{\psi,\omega}(\frac{v}{\|f\|})$ where
	$k\in K(x)$.\\
\end{theorem}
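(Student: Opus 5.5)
The plan is to write $\|f\|$ as a sum of four nonnegative "defects" that vanish simultaneously exactly when $f(x)=\|f\|$.

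By the Lemma splitting $\|f\|=\|v\|_{\mathcal{M}_{\psi,\omega}}+\|s\|$ and homogeneity, normalize $\|f\|=1$, so $\|v\|_{\mathcal{M}_{\psi,\omega}}=1-\|s\|$. Since $K(x)\neq\emptyset$, the Lemma of \cite{Foralewski2023} gives $1=\|x\|^{o}_{\varphi,\omega}=\frac1k(1+\rho_{\varphi,\omega}(kx))$ for every $k\in K(x)$. Fix such a $k$ and consider
\begin{align*}
f(kx)&=\int_0^\infty kx(t)v(t)\,dt+s(kx)\\
&\le \int_0^\infty kx^*(t)v^*(t)\,dt+\|s\|\,\theta(kx).
\end{align*}
For the first term I would use Hardy--Littlewood, $\int xv\le\int x^*v^*$. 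Then I would pass to the level function: since $\omega^{v^*}\prec\omega$ and $x^*$ is decreasing, $\int x^*v^*\le\int x^*\frac{(v^*)^0}{\omega}\omega$. Young's inequality in the form $uv\le\varphi(u)+\psi(v)$ (with the roles as in the Köthe duality with $\mathcal{M}_{\psi,\omega}$) together with the norm-expression Lemma for $P$ then gives
\[
\int kx^*v^*\le \rho_{\varphi,\omega}(kx)+P_{\psi,\omega}(v).
\]
For the singular part, $s$ vanishes on $E_{\varphi,\omega}$, so $|s(kx)|\le\|s\|\,d(kx)=\|s\|\,\theta(kx)\le\|s\|$. The estimate $\theta(kx)\le 1$ should follow from $\rho_{\varphi,\omega}(kx)=k-1<\infty$.

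Combining these,
\[
k=kf(x)\le \rho_{\varphi,\omega}(kx)+P_{\psi,\omega}(v)+\|s\| = k-1+P_{\psi,\omega}(v)+\|s\|.
\]
This is at most $k$ provided $P_{\psi,\omega}(v)\le 1-\|s\|$, and that in turn holds because $\|v\|_{\mathcal{M}_{\psi,\omega}}\le 1-\|s\|\le1$ (Luxemburg-type norm, so the modular is at most the norm when the norm is at most $1$). Hence $f(x)=\|f\|$ holds if and only if every inequality in the chain is an equality. Equality in Hardy--Littlewood is (1). Equality in the singular step is $s(kx)=\|s\|$, which is (2). Equality in the modular-versus-norm step is $P_{\psi,\omega}(v)+\|s\|=1$, which is (3) after undoing the normalization. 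Equality in the Young step, combined with (1), is exactly (4). Sufficiency runs the same chain backwards: assuming (1)--(4), one gets $kf(x)=\rho_{\varphi,\omega}(kx)+P_{\psi,\omega}(v/\|f\|)\|f\|+\|s\|$, and hence $f(x)=\|f\|$.

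The main obstacle is the middle step, $P_{\psi,\omega}(v)\le\|v\|_{\mathcal{M}_{\psi,\omega}}$ when $\|v\|\le1$, together with establishing equality there. This requires convexity and lower semicontinuity of $P_{\psi,\omega}$, which I would get from its representation as $\int\psi\bigl((v^*)^0/\omega\bigr)\omega$ via the Lemma on level functions. It also requires checking that, without the N-function assumption, no degenerate case ($k^{**}=\infty$, or $\psi$ taking the value $\infty$) breaks the chain. The hypothesis $K(x)\neq\emptyset$ is exactly what rules out the degenerate Amemiya case.
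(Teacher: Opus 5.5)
Your skeleton is the standard argument; the paper gives no proof of its own here, only the citation. You fix $k\in K(x)$, use Hardy--Littlewood, the level-function/Young bound $\int_0^\infty kx^*v^*\,dt\le\rho_{\varphi,\omega}(kx)+P_{\psi,\omega}(v)$, and $s(kx)\le\|s\|\,\theta(kx)\le\|s\|$, then read (1), (2), (4) off the equality cases. Sufficiency is also fine and needs no norm formula; you only dropped a factor $\|f\|$ in front of $\rho_{\varphi,\omega}(kx)$.

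\textbf{The gap.} It is the one step where $\|f\|$ itself, rather than $f(x)$, enters the necessity chain. You get $P_{\psi,\omega}(v)\le 1-\|s\|$ from $\|f\|=\|v\|_{\mathcal{M}_{\psi,\omega}}+\|s\|$. For the dual of the Orlicz-normed space this additivity is false; in Orlicz spaces additivity is a property of the dual of the Luxemburg norm, with the Orlicz norm on the regular part. Your own estimates show instead that $\|L_v+s\|\le\inf\{\lambda>0: P_{\psi,\omega}(v/\lambda)+\|s\|/\lambda\le 1\}$, which is typically strictly smaller.

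Concretely, take $\omega\equiv 1$, $\varphi(u)=e^u-1-u$, $v=c\chi_{(0,1)}$ with $\psi(c)=1$, and a singular $s$ of norm $1$ (it exists since $\varphi\notin\Delta_2$). Then $\|L_v+s\|\le\lambda_0$, where $\psi(c/\lambda_0)+1/\lambda_0=1$, and $\lambda_0<2$ because $\psi(c/2)<\psi(c)/2$. After normalizing, $\|v\|_{\mathcal{M}_{\psi,\omega}}>1-\|s\|$. So the step $\|v\|_{\mathcal{M}_{\psi,\omega}}\le1-\|s\|$ fails, even though its consequence $P_{\psi,\omega}(v)+\|s\|\le1$ is true.

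Consistency check: with additivity, condition (3) would force $P_{\psi,\omega}(u)=\|u\|_{\mathcal{M}_{\psi,\omega}}<1$ for $u=v/\|f\|$. That is impossible for strictly convex $\psi$, so no functional with $s\neq 0$ could ever be supporting. Also, the ``main obstacle'' you name is not the issue: $P_{\psi,\omega}(v)\le\|v\|_{\mathcal{M}_{\psi,\omega}}$ for $\|v\|\le 1$ only uses $\psi(\alpha t)\le\alpha\psi(t)$ and lower semicontinuity.

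\textbf{The fix.} You need $P_{\psi,\omega}(v)+\|s\|\le 1$ whenever $\|f\|=1$, and this must be proved by testing $f$ on suitable elements. Take $y_1\in E_{\varphi,\omega}$ and $y_2$ disjoint from $y_1$ with $\rho_{\varphi,\omega}(y_2)<\infty$. Amemiya's formula with $k=1$, together with $\rho_{\varphi,\omega}(y_1+y_2)\le\rho_{\varphi,\omega}(y_1)+\rho_{\varphi,\omega}(y_2)$ (valid for disjoint functions since $\omega$ decreases), gives $f(y_1+y_2)\le 1+\rho_{\varphi,\omega}(y_1)+\rho_{\varphi,\omega}(y_2)$. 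Since $s(y_1)=0$, this reads
\[
\Bigl(\int_0^\infty y_1v\,dt-\rho_{\varphi,\omega}(y_1)\Bigr)+s(y_2)+\Bigl(\int_0^\infty y_2v\,dt-\rho_{\varphi,\omega}(y_2)\Bigr)\le 1 .
\]

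Choose the test elements as follows.
\begin{itemize}
\item For $y_2$: fix $z$ with $\rho_{\varphi,\omega}(z)<\infty$ and $s(z)>\|s\|-\varepsilon$, and let $y_2=z\chi_{\{|z|>n\}\cup\{|z|<1/n\}}$. Then $s(y_2)=s(z)$, because the removed part is bounded with support of finite measure and so lies in $E_{\varphi,\omega}$. By dominated convergence, $\rho_{\varphi,\omega}(y_2)\to0$ and $\int_0^\infty y_2v\,dt\to0$.
\item For $y_1$: take it bounded with support of finite measure, arranged equimeasurably with $v$ so that $\int y_1v=\int y_1^*v^*$. Let $y_1^*$ be a truncation of $q\bigl((v^*)^0/\omega\bigr)$ that is constant on the maximal level intervals of $v^*$. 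This realizes the equality case of your Young/level-function step, so $\int_0^\infty y_1v\,dt-\rho_{\varphi,\omega}(y_1)\to P_{\psi,\omega}(v)$.
\item For disjointness: delete $\{|z|>n\}$ from $y_1$, and delete from $y_2$ the piece lying in $\mathrm{supp}\,y_1$, which belongs to $E_{\varphi,\omega}$.
\end{itemize}

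Passing to the limit gives $P_{\psi,\omega}(v)+\|s\|\le1$. With that in place, your equality analysis goes through as written.
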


\begin{theorem}\label{th14}\cite[Theorem 3.7]{Wang2024}
	For arbitrary Orlicz function $\varphi$ and decreasing weight $\omega$. Let $x\in S(\Lambda_{\varphi, \omega}^{o})$ and $K(x)\neq \emptyset$. Then $Grad(x)\subset \mathcal{M}_{\psi, \omega}$ 
	if and only if one of the following conditions is satisfied.\\
	(1) $\theta(kx)<1$, for arbitrary $k\in K(x)$.\\
	(2) $P_{\psi,\omega}(p_{-}(kx^{*})\omega)=\rho_{\psi, \omega}(p_{-}(kx))=1$.
\end{theorem}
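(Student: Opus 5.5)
We characterize when $\mathrm{Grad}(x)$ avoids singular functionals. By Theorem \ref{NormattainOrlicznorm}, every supporting functional at $x$ has the form $f=L_v+s$ with $s\in F$, and $\mathrm{Grad}(x)\subset\mathcal{M}_{\psi,\omega}$ means exactly that every such $f$ has $s=0$. So we must show that a nonzero singular part can occur if and only if both (1) and (2) fail. Throughout we use three facts:
\begin{itemize}
\item[(a)] $\|f\|=\|v\|_{\mathcal{M}_{\psi,\omega}}+\|s\|$, from the decomposition lemma;
\item[(b)] $\sup\{s(y):\|s\|\le 1\}=\theta(y)$ for singular $s$, from $d(y)=d^o(y)=\theta(y)$ and the Hahn--Banach construction behind \cite[Theorem 2.48]{Chen1996};
\item[(c)] the conditions (2)--(4) of Theorem \ref{NormattainOrlicznorm}.
\end{itemize}

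\textbf{Sufficiency.} Suppose $f=L_v+s\in\mathrm{Grad}(x)$ with $s\neq 0$, and fix $k\in K(x)$.

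Assume first that (1) holds, so $\theta(kx)<1$. Condition (2) of Theorem \ref{NormattainOrlicznorm} gives $s(kx)=\|s\|$. By (b), $s(kx)\le \theta(kx)\|s\|<\|s\|$, which is a contradiction, so $s=0$.

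Assume next that (2) holds. Combine conditions (3) and (4) of Theorem \ref{NormattainOrlicznorm} with Young's inequality applied pointwise to $k x^*$ and $v^*/\|f\|$ via the level-function representation of $P_{\psi,\omega}$. Equality in Young forces
\[
p_-(kx^*)\,\omega\le (v/\|f\|)^{*\,0}\le p(kx^*)\,\omega .
\]
Then monotonicity of $\psi$ gives $P_{\psi,\omega}(v/\|f\|)\ge \rho_{\psi,\omega}(p_-(kx))=1$. Condition (3) then yields $\|s\|/\|f\|\le 0$, so again $s=0$.

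\textbf{Necessity.} We argue by contraposition: assume both (1) and (2) fail, and construct an element of $\mathrm{Grad}(x)$ with nonzero singular part.

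Since (1) fails, there is $k\in K(x)$ with $\theta(kx)\ge 1$. The identity $\frac1k(1+\rho_{\varphi,\omega}(kx))=1$ gives $\rho_{\varphi,\omega}(kx)<\infty$, and this forces $\theta(kx)\le 1$, so $\theta(kx)=1$. The lemma \cite[Theorem 2.48]{Chen1996} then supplies a singular $s_0$ with $\|s_0\|=1$ and $s_0(kx)=1$.

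Since (2) fails, $\rho_{\psi,\omega}(p_-(kx))=:a<1$. Note also that $a$ is finite, because $k\le k^{**}$. Choose $v_0$ equimeasurable with $p_-(kx^*)\omega$ and arranged on the supports of $x$ so that $\int x v_0=\int x^* v_0^*$, using a measure-preserving transformation. This gives $P_{\psi,\omega}(v_0)=a$. Now set
\[
f=\lambda\bigl(L_{v_0}+(1-a)\,s_0\bigr),
\]
with $\lambda$ chosen so that $\|f\|=1$.

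It remains to check conditions (1)--(4) of Theorem \ref{NormattainOrlicznorm}. Conditions (1) and (2) hold by construction. For (3), the norm computation uses (a) together with the Orlicz-norm formula for $\mathcal{M}_{\psi,\omega}$. For (4), we need Young's equality $k x^*\,p_-(kx^*)=\varphi(kx^*)+\psi(p_-(kx^*))$. Hence $f\in\mathrm{Grad}(x)$ while its singular part is nonzero, so $\mathrm{Grad}(x)\not\subset\mathcal{M}_{\psi,\omega}$.

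\textbf{Main obstacle.} The hardest step is the last verification: showing that $\|f\|=1$ is compatible with both (3) and (4) when $a<1$. In particular, one must check that the level function of $v_0$ equals $p_-(kx^*)\omega$ itself. This should follow because $p_-(kx^*)$ is decreasing, so the level function coincides with it, but it must be confirmed carefully without the N-function hypothesis, where $p$ may be bounded.
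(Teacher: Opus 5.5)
\textbf{Minor omission in the sufficiency direction.} Your sufficiency argument is sound, apart from one omission. Theorem \ref{NormattainOrlicznorm} assumes $v\neq 0$, so you must first exclude a purely singular $f=s\in\mathrm{Grad}(x)$. This is quick. From $1=s(x)\le\theta(x)$ and $k\theta(x)=\theta(kx)\le 1$ for $k\in K(x)$ we get $k\le 1$. Then $1=\frac1k\bigl(1+\rho_{\varphi,\omega}(kx)\bigr)$ forces $\rho_{\varphi,\omega}(kx)=0$, i.e.\ $x=0$.

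\textbf{The gap in the necessity direction.} The real gap sits exactly at the step you defer. Put $g=L_{v_0}+(1-a)s_0$. Young's equality at $p_-$ gives
\[
g(x)=\frac1k\bigl(\rho_{\varphi,\omega}(kx)+a\bigr)+\frac{1-a}{k}=\frac1k\bigl(1+\rho_{\varphi,\omega}(kx)\bigr)=1 .
\]
Hence $f=\lambda g$ has $f(x)=\lambda$, and it can support $x$ only if $\lambda=1$. Conditions (3) and (4) of Theorem \ref{NormattainOrlicznorm} also force $\lambda=1$. So there is no normalization to choose: everything rests on proving $\|g\|\le 1$, and fact (a) cannot deliver this.

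Read with the Luxemburg norm of $v$, (a) would give $\|g\|=\|v_0\|_{\mathcal{M}_{\psi,\omega}}+1-a$. Convexity of $\psi$, together with $\psi(t)/t\to 0$ as $t\to0^{+}$ (recall $\varphi>0$ on $(0,\infty)$), gives $\|v_0\|_{\mathcal{M}_{\psi,\omega}}>P_{\psi,\omega}(v_0)=a$. So (a) would yield $\|g\|>1$, and the normalized functional would not support $x$. For the same reason, (a) in that form contradicts condition (3) of Theorem \ref{NormattainOrlicznorm} whenever $v\neq0\neq s$. You cannot rely on both facts at once.

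\textbf{How to close it.} What is missing is a direct proof of $\|g\|\le 1$ in $(\Lambda^{o}_{\varphi,\omega})^{*}$ via the Amemiya formula (1.1).

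\begin{itemize}
\item Take $s_0$ of norm $1$ in $(\Lambda^{o}_{\varphi,\omega})^{*}$ (Hahn--Banach). Then $|s_0(z)|\le d^{o}(z)=\theta(z)\le 1$ whenever $\rho_{\varphi,\omega}(z)<\infty$.
\item Fix $y\in\Lambda^{o}_{\varphi,\omega}$ and $k'>0$ with $\rho_{\varphi,\omega}(k'y)<\infty$. The Hardy--Littlewood inequality, $v_0^{*}=p_-(kx^{*})\omega$, and Young's inequality give
\[
k'|g(y)|\le\int_0^\infty k'y^{*}(t)\,p_-\bigl(kx^{*}(t)\bigr)\,\omega(t)\,dt+(1-a)\,\theta(k'y)\le\rho_{\varphi,\omega}(k'y)+a+(1-a).
\]
\item Dividing by $k'$ and taking the infimum over $k'$ yields $|g(y)|\le\|y\|^{o}_{\varphi,\omega}$.
\end{itemize}

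Therefore $\|g\|\le1=g(x)$, so $g\in\mathrm{Grad}(x)$. By uniqueness of the decomposition, $g$ has the nonzero singular part $(1-a)s_0$, so $g\notin\mathcal{M}_{\psi,\omega}$.

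With this estimate, you no longer need to verify conditions (1)--(4) of Theorem \ref{NormattainOrlicznorm} in this direction. The level-function identity you worried about is also no longer needed, though it is true anyway, since $v_0^{*}/\omega=p_-(kx^{*})$ is non-increasing.
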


\begin{theorem}\cite[Theorem 3.8]{Wang2024}\label{Th15}
	Assume $\varphi$ is an Orlicz function and $\omega$ is a decreasing weight. $L_{v}\in S(\mathcal{M}_{\psi, \omega})$ is a supporting functional of $x\in \Lambda_{\varphi, \omega}^{o}$($K(x)\neq \emptyset$) if and only if the following conditions are satisfied.\\
	(1) $P_{\psi, \omega}(v)=1$, $\int_{0}^{\infty} x(t)v(t)dt=\int_{0}^{\infty}x^{*}(t)v^{*}(t)dt$. \\
	(2) $p_{-}(kx^{*}(t))\omega(t)\leq v^{*}(t)\leq p(kx^{*}(t))\omega(t)$ a.e on $R^{+}$ for every $k\in K(x)$.
\end{theorem}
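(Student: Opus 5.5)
The plan is to obtain Theorem \ref{Th15} by specialising Theorem \ref{NormattainOrlicznorm} to $s=0$ and $\|f\|=\|v\|_{\mathcal{M}_{\psi,\omega}}=1$, and then turning the Young equality in condition (4) into a pointwise sandwich on $v^{*}$.

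\emph{Necessity.} Let $L_{v}\in S(\mathcal{M}_{\psi,\omega})$ support $x$, so $L_{v}(x)=1$ and $\|L_{v}\|=1$.
\begin{itemize}
\item[(i)] Theorem \ref{NormattainOrlicznorm}(1) gives the rearrangement equality $\int_{0}^{\infty}x(t)v(t)\,dt=\int_{0}^{\infty}x^{*}(t)v^{*}(t)\,dt$.
\item[(ii)] Condition (3) with $s=0$, $\|f\|=1$ gives $P_{\psi,\omega}(v)=1$.
\item[(iii)] Condition (4) gives, for every $k\in K(x)$,
\[
\int_{0}^{\infty}kx^{*}(t)v^{*}(t)\,dt=\rho_{\varphi,\omega}(kx)+P_{\psi,\omega}(v).
\]
\end{itemize}
To rewrite (iii), use the norm-expression lemma of Subsection \ref{Normexpression}: $P_{\psi,\omega}(v)=\int_{0}^{\infty}\psi\bigl(v^{*}/\omega^{v^{*}}\bigr)\omega^{v^{*}}\,dt$ with $\omega^{v^{*}}\prec\omega$. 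Then (iii) reads
\[
\int_{0}^{\infty}\Bigl[kx^{*}\tfrac{v^{*}}{\omega^{v^{*}}}-\psi\bigl(\tfrac{v^{*}}{\omega^{v^{*}}}\bigr)\Bigr]\omega^{v^{*}}\,dt=\int_{0}^{\infty}\varphi(kx^{*})\,\omega\,dt .
\]

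The key comparison step is to show $\int_{0}^{\infty}\varphi(kx^{*})\,\omega^{v^{*}}\,dt\le\int_{0}^{\infty}\varphi(kx^{*})\,\omega\,dt$. This holds because $\varphi(kx^{*})$ is non-increasing and $\omega^{v^{*}}\prec\omega$, via the Hardy lemma. With it, Young's inequality pointwise forces two things:
\begin{itemize}
\item equality in the Hardy step;
\item $kx^{*}\cdot\frac{v^{*}}{\omega^{v^{*}}}=\varphi(kx^{*})+\psi\bigl(\frac{v^{*}}{\omega^{v^{*}}}\bigr)$ a.e.
\end{itemize}
By the stated equality case of Young's inequality, the second gives $p_{-}(kx^{*})\le v^{*}/\omega^{v^{*}}\le p(kx^{*})$.

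It remains to replace $\omega^{v^{*}}$ by $\omega$. On a maximal level interval $(a_{n},b_{n})$ of $v^{*}$, the quotient $v^{*}/\omega^{v^{*}}$ equals the constant $R(a_{n},b_{n})$. So $p_{-}(kx^{*})\le R\le p(kx^{*})$ there, and I would argue that $v^{*}=R\,\omega$ on the interval, i.e. $\omega^{v^{*}}=\omega$ wherever it matters.

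This replacement is the main obstacle. The plan is to exploit the equality in the Hardy step: on a level interval $\omega^{v^{*}}=v^{*}/R$, and equality forces $\int_{a_n}^{s}(\omega-\omega^{v^{*}})\,dt=0$ at every point where $\varphi(kx^{*})$ strictly decreases. Where $\varphi(kx^{*})$ is constant, $kx^{*}$ is constant, hence so are $p_{-}(kx^{*})$ and $p(kx^{*})$. In that case I would instead pass to the level function $(v^{*})^{0}=R\omega$ and check that $(v^{*})^{0}$ still satisfies the sandwich. Failing a direct argument, one notes that $(v^{*})^{0}$ yields the same value of $P_{\psi,\omega}$ and of the pairing, since on level intervals the pairing with a function constant there is unchanged. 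If genuinely needed I would accept a modification of $v$ to this extent.

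\emph{Sufficiency.} Assume (1) and (2), fix $k\in K(x)$, and integrate the Young equality $kx^{*}v^{*}/\omega=\varphi(kx^{*})+\psi(v^{*}/\omega)$ against $\omega$. This yields
\[
\int_{0}^{\infty}kx^{*}v^{*}\,dt=\rho_{\varphi,\omega}(kx)+\int_{0}^{\infty}\psi\bigl(\tfrac{v^{*}}{\omega}\bigr)\omega\,dt\ge\rho_{\varphi,\omega}(kx)+P_{\psi,\omega}(v)=\rho_{\varphi,\omega}(kx)+1 .
\]
Hence $L_{v}(x)\ge\frac{1}{k}\bigl(1+\rho_{\varphi,\omega}(kx)\bigr)=\|x\|^{o}_{\varphi,\omega}=1$, using the $K(x)$ characterisation of the Orlicz norm. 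The reverse inequality $L_{v}(x)\le\|v\|_{\mathcal{M}_{\psi,\omega}}\|x\|^{o}\le1$ holds because $P_{\psi,\omega}(v)=1$ gives $\|v\|_{\mathcal{M}_{\psi,\omega}}\le1$. Therefore $L_{v}(x)=1$, and since $\|L_{v}\|=1$ by hypothesis, $L_{v}$ is a supporting functional of $x$.
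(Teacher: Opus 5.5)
\paragraph{Review of the proposal.} The paper gives no proof of this statement; it is quoted from Theorem 3.8 of Wang2024. So I judge your argument on its own.

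\textbf{Sufficiency is correct.} Integrating the pointwise Young equality against $\omega$, and using that $g=\omega$ is admissible in the infimum defining $P_{\psi,\omega}$, gives $\int_0^\infty x^*v^*\,dt\ge\frac1k\bigl(1+\rho_{\varphi,\omega}(kx)\bigr)=\|x\|^o_{\varphi,\omega}=1$. H\"older's inequality then closes the argument.

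\textbf{Necessity has a genuine gap.} Your argument is sound up to two facts: $p_-(kx^*)\,\omega^{v^*}\le v^*\le p(kx^*)\,\omega^{v^*}$ a.e., and equality in the Hardy step, $\int_0^\infty\varphi(kx^*)\omega^{v^*}\,dt=\int_0^\infty\varphi(kx^*)\omega\,dt$. The gap is exactly the step you flag, replacing $\omega^{v^*}$ by $\omega$. Your fallback does not close it. Condition (2) is a statement about the \emph{given} $v$, whereas $(v^*)^0$ is a different function, in general not even equimeasurable with $v$. So the sandwich for $(v^*)^0$ says nothing about $v^*$. Concretely, on a piece of a maximal level interval $(a_n,b_n)$ where $kx^*\equiv c$, Young's equality only gives $p_-(c)\le R(a_n,b_n)\le p(c)$. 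That pins down $(v^*)^0=R(a_n,b_n)\omega$, but leaves $v^*$ free among non-increasing functions having $(a_n,b_n)$ as a level interval with that ratio.

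\textbf{The step cannot be completed: necessity is false as stated, even for an $N$-function.} Take $\varphi(u)=\psi(u)=u^2/2$ (so $p(u)=u$), $\omega(t)=t^{-1/2}$ (so $W(1)=2$), $x=\tfrac12\chi_{(0,1)}$ and $v=2\chi_{(0,1)}$.
\begin{itemize}
\item Here $\rho_{\varphi,\omega}(kx)=\rho_{\psi,\omega}(p(kx))=k^2/4$. Hence $\|x\|^o_{\varphi,\omega}=\inf_{k>0}(1/k+k/4)=1$ and $K(x)=\{2\}$.
\item Since $R(0,s)=\sqrt{s}\le 1=R(0,1)$, the interval $(0,1)$ is a maximal level interval of $v^*$. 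Thus $(v^*)^0=\omega\chi_{(0,1)}$ and $P_{\psi,\omega}(v)=\psi(1)W(1)=1$, so $\|L_v\|=\|v\|_{\mathcal{M}_{\psi,\omega}}\le 1$.
\item Since $\int_0^\infty xv\,dt=1$, $L_v$ is a supporting functional of $x$ and satisfies (1).
\item But (2) with $k=2$ demands $v^*(t)=p(2x^*(t))\omega(t)=t^{-1/2}$ a.e.\ on $(0,1)$, which is false.
\end{itemize}
Note that $\omega\chi_{(0,1)}$ also supports $x$, so $x$ is not a smooth point.

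\textbf{What your argument actually proves.} It proves (2) with $v^*$ replaced by $(v^*)^0$, which is equivalent to the sandwich with $\omega^{v^*}$. For a correct ``if and only if'' you must also keep the Hardy-equality condition. Without it the converse fails, because $\int_0^\infty x^*v^*\,dt\le\int_0^\infty x^*(v^*)^0\,dt$ can be strict. With it, the converse follows from $\int_0^\infty kx^*v^*\,dt=\int_0^\infty\varphi(kx^*)\omega^{v^*}\,dt+P_{\psi,\omega}(v)$.
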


\nocite{Halperin195305}\nocite{Ryff1970449}
\section{Acknowledgement}
The authors gratefully acknowledge financial support from China Scholarship Council and the National Natural Science Foundation of P. R. China (Nos. 11971493).\\
\section*{Author Contribution} 
All authors have equally contributed to writing and preparing the manuscript text. All authors reviewed the results and approved the final version of the manuscript.\\
\section*{Data Availability} 
No data were used to support this study.
\section*{Conflicts of Interest}
The authors declare that there is no conflict of interest regarding the publication of this paper.

\bibliographystyle{amsplain}

\end{document}